\documentclass[11pt,a4paper]{amsart}
\usepackage{hyperref}
\usepackage{mathrsfs}
\usepackage{amsmath}
\usepackage{amssymb}
\usepackage[all]{xy}
\usepackage{latexsym}
\usepackage[T1]{fontenc}
\setcounter{tocdepth}{1}
\usepackage{datetime}
\usepackage{pgf,tikz}
\usetikzlibrary{arrows}

 \usepackage[latin1]{inputenc}

\newcommand{ \inj}{ \hookrightarrow}
\newcommand{ \surj}{ \twoheadrightarrow}

\newcommand{\R}{\mathbb{R}}
\newcommand{\Z}{\mathbb{Z}}
\newcommand{\F}{\mathbb{F}}
\newcommand{\Q}{\mathbb{Q}}

\newcommand{\mf}{\underline{\F}}
\newcommand{\mz}{\underline{\Z}}
\newcommand{\ste}{\mathcal{A}}

\newcommand{\ab}{\mathcal{A}b}

\newcommand{\sh}{\mathcal{SH}}
\newcommand{\Sp}{\mathcal{S}p}
\newcommand{\T}{\mathcal{T}}
\renewcommand{\H}{\mathcal{H}}
\newcommand{\M}{\mathcal{M}}
\newcommand{\Or}{\mathcal{O}}

\newcommand{\freem}{\mathrm{Fr}\M^f}
\newcommand{\torsm}{\mathrm{Tors}\M^f}
\newcommand{\free}{\mathrm{Fr}}
\newcommand{\tors}{\mathrm{Tors}}

\theoremstyle{definition}
\newtheorem{de}{Definition}[section]
\newtheorem{nota}[de]{Notation}

\newtheorem*{de*}{Definition}

\theoremstyle{plain}
\newtheorem{thm}[de]{Theorem}
\newtheorem{lemma}[de]{Lemma}
\newtheorem{pro}[de]{Proposition}
\newtheorem{cor}[de]{Corollary}

\newtheorem*{thm*}{Theorem}
\newtheorem*{lemma*}{Lemma}
\newtheorem*{pro*}{Proposition}
\newtheorem*{cor*}{Corollary}

\theoremstyle{remark}
\newtheorem{rk}[de]{Remark}
\newtheorem{ex}[de]{Example}

\title[Anderson duality]{Equivariant Anderson duality and Mackey functor duality}
\author{Nicolas Ricka}
\address{Laboratoire Analyse, G\'eom\'etrie et Applications \\
 UMR 7539 \\
 99, avenue Jean-Baptiste Cl\'ement, \\
 93430 Villetaneuse, France}
\email{ricka@math.univ-paris13.fr}
\keywords{Equivariant stable homotopy theory, Anderson duality, Morava K-theory}
\subjclass[2000]{55N91, 19L47}

\begin{document}

\begin{abstract}
We show that the $\Z/2$-equivariant $n^{th}$ integral Morava $K$-theory with reality is self-dual with respect to equivariant Anderson duality. In particular, there is a universal coefficients exact sequence in integral Morava K-theory with reality, and we recover the self-duality of the spectrum $KO$ as a corollary.
The study of $\Z/2$-equivariant Anderson duality made in this paper gives a nice interpretation of some symmetries of $RO(\Z/2)$-graded ({\it i.e.} bigraded) equivariant cohomology groups in terms of Mackey functor duality.
\end{abstract}

\maketitle

\textbf{Conventions:} In this paper, $\F$ denotes the field with two elements. When considering the Steenrod algebra and the chromatic tower, the prime number is assumed to be $p=2$. The category of abelian groups is denoted $\ab$. For $E$ an object in the category of spectra (resp. $\Z/2$-equivariant spectra), we denote $E^*$ (resp. $E^{\star}$) the cohomology theory represented by $E$, and $E_*$ (resp. $E_{\star}$) the homology theory represented by $E$. The homotopy of $E$ is denoted $E_*$ (resp. $E_{\star}$). Equivariant cohomology theories are graded over the orthogonal representation ring, thus $\star$ is an orthogonal representation of $\Z/2$. We denote $1$ the trivial one dimensional representation, and $\alpha$ the sign representation.

\section{Introduction}
The focus of this paper is to generalise the universal coefficient exact sequence to $\Z/2$-equivariant generalised cohomology theories.
In \cite{And69}, Anderson introduced a duality functor $\nabla^e$ of the category of non-equivariant spectra, and showed that for a spectrum $E$, the $E$-cohomology and $E$-homology enters in a short exact sequence 
$$ 0 \rightarrow Ext^1_{\Z}(E_{*-1}(X),Z) \rightarrow (\nabla^e E)^{*}(X) \rightarrow Hom_{\Z}(E_{*}(X),\Z) \rightarrow 0$$
which restricts to the usual universal coefficients exact sequence for ordinary (co)homology.

Let $KO$ be the non-equivariant spectrum which represent periodic real K-theory, and $KU$ the non-equivariant spectrum which represent periodic complex K-theory. The aim of \cite{And69} is to provide universal coefficients in K-theories (both complex and real) by showing that the periodic K-theory spectrum is self-dual for the duality functor $\nabla^e$. As an application, Anderson obtains universal coefficient exact sequences computing K-cohomology from K-homology. 

Recently, in \cite{HS14, Sto11}, the authors show that the spectra $KO$ and a form of topological modular forms are self-Anderson-dual, up to suspension.
Recall that the complex conjugation on complex vector bundles induces a $\Z/2$-action on the spectrum $KU$, and that this produces a genuine $\Z/2$-equivariant spectrum $K\R$. This $\Z/2$-spectrum was introduced by Atiyah in \cite{At66}, and the associated (co)homology theory is called K-theory with reality. The fixed points of $KU$ with respect to its $\Z/2$-action is the spectrum $KU^{\Z/2} = KO$.
A byproduct of \cite{HS14} is that the weak equivalence $\nabla^{e}KU \cong KU$ realising the self-Anderson duality of $KU$ is compatible with the $\Z/2$-action induced by complex conjugation.

Let $K(n)$ be the $n^{th}$ integral Morava K-theory spectrum. These are higher chromatic version of $KU$, and there is an weak equivalence $KU \cong K(1)$. The homotopy groups of $K(n)$ are $\Z[v_n^{\pm 1}]$, where $v_n$ lies in degree $2(2^n-1)$. There are also $\Z/2$-equivariant refinements of these spectra, denoted $K\R(n)$, whose definition appears in \cite{HK01} and is recalled here in Definition \ref{de:moravareal}. It should be noted that one major difference between the spectra $K\R(n)$ for $n=1$ and $n>1$ is that only $K\R(1)$ is known to be a ring 
spectrum. For simplicity, $v_n$ also denotes a well-chosen $\Z/2$-equivariant lift of the classes $v_n \in K(n)_{2(2^n-1)}$.

The aim of this paper is to generalize the results described above in two directions:
\begin{itemize}
 \item show that the self duality of $KU$ is not only compatible with the action of $\Z/2$, but is really a self-duality result for the genuine $\Z/2$-equivariant spectrum $K\R$, with respect to a $\Z/2$-equivariant version of Anderson duality which is defined below,
 \item prove that this is a particular occurrence of a more general result of self-Anderson duality of the $\Z/2$-spectra $K\R(n)$.
\end{itemize}
Our approach differs from \cite{HS14} in the use of the slice spectral sequence of Hill-Hopkins-Ravenel \cite{HHR} instead of the homotopy fixed points to compute the homotopy groups of the spectra $K\R(n)$. The fact that slices of $K\R(n)$ are the $\Z/2$-spectrum $H\mz$ yields the result, without computing any differential.

Let $MU$ be the complex cobordism spectrum. The action of $\Z/2$ on the spaces appearing in the definition of $MU$ assemble to define a commutative ring $\Z/2$-spectrum $M\R$. As $M\R$ is a commutative ring spectrum, it makes sense to speak about the category of $M\R$-modules, as well as the category of modules over various commutative ring spectra build from $M\R$. For example, let $M\R[v_n^{\pm 1}]$ be the $M\R$-module localisation of $M\R$ with respect to $v_n$. Explicitly, $M\R[v_n^{\pm 1}] = \underset{k}{\mathrm{hocolim}} \Sigma^{-k|v_n|}M\R$.
The main result of this paper is the following:

\begin{thm*}[\emph{Theorem \ref{thm:selfand}}]
The map $M\R[v_n^{-1}] \rightarrow \Sigma^{-2+2\alpha} \nabla K\R(n)$ induced by a generator $1 \in \pi_0(\Sigma^{-2+2\alpha}K\R(n))$ factorizes through $K\R(n)$, and yields a weak equivalence of $M\R$-modules
$$ K\R(n) \stackrel{\cong}{\rightarrow} \Sigma^{-2+2\alpha} \nabla K\R(n).$$
\end{thm*}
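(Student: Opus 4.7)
The plan is to leverage the slice filtration of Hill-Hopkins-Ravenel to reduce the proof to a slice-by-slice identification, using the paper's running theme that equivariant Anderson duality corresponds to Mackey functor duality, together with the key input that every slice of $K\R(n)$ is a representation-graded suspension of $H\mz$.

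For the map itself, I would use the $M\R$-module structure on $\nabla K\R(n)$ (inherited from that of $K\R(n)$ via the contravariant functor $\nabla$) to reduce the construction of an $M\R$-module map $M\R[v_n^{-1}] \to \Sigma^{-2+2\alpha}\nabla K\R(n)$ to the datum of a single class in $\pi_0$ on which $v_n$ acts invertibly. This class is produced from the generator $1 \in \pi_0(\Sigma^{-2+2\alpha}K\R(n))$ via the Anderson adjunction, which sends a homotopy class in the source to a homomorphism out of $\pi_{2-2\alpha}K\R(n)$ into the Anderson dualizing Mackey functor (possibly with an $\mathrm{Ext}^1$ correction that vanishes in the bidegree under consideration). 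Factoring through $K\R(n)$ then amounts to checking that the polynomial generators $\bar v_i$ ($i\neq n$) act trivially on this class, which is verified by showing that $\pi_\star(\Sigma^{-2+2\alpha}\nabla K\R(n))$ vanishes in the relevant bidegrees — a direct read-off from the slice spectral sequence, since those bidegrees do not lie on the checkerboard pattern supporting the $H\mz$-slices.

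To promote the factored map $K\R(n) \to \Sigma^{-2+2\alpha}\nabla K\R(n)$ to a weak equivalence, I would compare slice spectral sequences on both sides. The slice associated graded of $K\R(n)$ is a countable sum of representation-suspensions of $H\mz$, indexed by powers of $v_n$ with $|v_n|=(2^n-1)(1+\alpha)$. On the target, the slices of the Anderson dual are computed by Mackey-functor-dualizing each $H\mz$ slice, which by the paper's identification of equivariant Anderson duality with Mackey duality returns another $H\mz$ slice, now twisted by a controlled representation shift. The total shift between the two associated gradeds should assemble to exactly $-2+2\alpha$ from the combined contributions of positive and negative powers of $v_n$. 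Once the slices agree, conditional convergence of the slice spectral sequence together with the fact that both sides are cellular $M\R$-modules of controlled slice behaviour promotes the slice-level isomorphism to a weak equivalence of spectra.

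The principal difficulty, in my view, is the representation-grading bookkeeping: establishing that the Mackey-functor-dual of $\Sigma^{k|v_n|}H\mz$ is precisely $\Sigma^{-2+2\alpha-k|v_n|}H\mz$ and that the identification respects multiplication by $v_n$ on either side. This is exactly where the paper's reinterpretation of $RO(\Z/2)$-graded symmetries as Mackey functor duality is doing the real work, and where the precise value $-2+2\alpha$ of the shift is dictated by the geometry of the Mackey functor $\mz$ rather than chosen by hand.
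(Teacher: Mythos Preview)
Your overall strategy matches the paper's: build an $M\R$-module map from a $\pi_0$-generator, factor through $K\R(n)$, and then compare slice spectral sequences using the self-duality $H\mz \cong \Sigma^{-2+2\alpha}\nabla H\mz$. Two points, however, need tightening.

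First, for the factorization, checking that $\pi_{i(1+\alpha)}\bigl(\Sigma^{-2+2\alpha}\nabla K\R(n)\bigr)$ vanishes for $i$ not a multiple of $2^n-1$ only kills the \emph{first} generator $r_i$. After that you are mapping out of an intermediate quotient $M\R[v_n^{-1}]/(r_{i_1},\ldots,r_{i_m})$, and the obstruction to killing $r_{i_{m+1}}$ lives in an $M\R$-module mapping group, not in a single $\pi_\star$. The paper sidesteps this by a different mechanism: since $r_k$ (for $k\neq 2^n-1$) acts as the zero self-map of $K\R(n)$ by construction, Proposition~\ref{pro:dualitymod} dualises this to show $r_k$ acts as zero on $\nabla K\R(n)$ as well. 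Then for any $M\R$-module map $g$ into $\nabla K\R(n)$ one has $g\circ r_k = r_k\circ g = 0$, so the factorization goes through at every stage automatically.

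Second, and more seriously, ``once the slices agree, conditional convergence promotes the slice-level isomorphism to a weak equivalence'' is not a valid step as written. Knowing that the two towers have abstractly isomorphic layers does not tell you that the specific map $\phi$ induces those isomorphisms. The paper closes this gap by arguing that $\phi$ carries the $\pi_0$-generator to the $\pi_0$-generator, is a $\Z[v_n]$-module map by the multiplicative properties of the slice spectral sequence (Proposition~\ref{pro:multiplicativeprop}), hence is an isomorphism on $\pi_{*(1+\alpha)}$; since both $E_2$-pages are free $H\mz_\star$-modules generated in those diagonal degrees, $\phi$ is then an isomorphism on the full $E_2$-page. Your remark about checking compatibility with $v_n$-multiplication is pointing at this, but it is the heart of the proof, not bookkeeping. (Incidentally, the shift $-2+2\alpha$ arises uniformly on every slice from Corollary~\ref{cor:mackselfdual}, not from any ``combined contribution'' of positive and negative $v_n$-powers: dualising simply permutes the index set $k\mapsto -k$ and shifts each layer by the same amount.)
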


\begin{de*}
 Let $KO(n)$ be the spectrum $K\R(n)^{\Z/2}$.
\end{de*}

As an application of Theorem \ref{thm:selfand}, we show the following result, passing to fixed points:

\begin{cor*}[\emph{Corollary \ref{cor:koselfand}}]
There is a weak equivalence of non-equivariant spectra
$$ KO(n) \cong \Sigma^{-2^{n+2}+4} \nabla^e KO(n).$$
\end{cor*}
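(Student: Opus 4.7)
The strategy is to apply the categorical $\Z/2$-fixed-point functor $(-)^{\Z/2}$ to the self-duality equivalence $K\R(n)\simeq \Sigma^{-2+2\alpha}\nabla K\R(n)$ of Theorem~\ref{thm:selfand}. The left-hand side becomes $KO(n)$ by definition, so the task reduces to identifying $(\Sigma^{-2+2\alpha}\nabla K\R(n))^{\Z/2}$ with $\Sigma^{-2^{n+2}+4}\nabla^e KO(n)$.

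First, I would establish compatibility of equivariant and non-equivariant Anderson duality under fixed points, namely $(\nabla E)^{\Z/2}\simeq \nabla^e(E^{\Z/2})$ for appropriately bounded $\Z/2$-spectra $E$. This should follow from setting up $\nabla$ as the function spectrum $F(-,I_{\mz})$ for an equivariant Anderson spectrum $I_{\mz}$ whose $\Z/2$-fixed points recover the ordinary Anderson spectrum $I_{\Z}$, combined with a Wirthm\"uller-type comparison of $\Z/2$-equivariant and underlying mapping spectra on the relevant class of $\Z/2$-spectra.

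Second, I would convert the $RO(\Z/2)$-graded shift $\Sigma^{-2+2\alpha}$ into the integer shift $\Sigma^{-2^{n+2}+4}$ using the $v_n$-periodicity of $K\R(n)$ (and hence of its Anderson dual $\nabla K\R(n)\simeq \Sigma^{2-2\alpha}K\R(n)$). Multiplying by $v_n^{-2}\in\pi^{\Z/2}_{-2(1+\alpha)(2^n-1)}K\R(n)$ yields an equivalence
\[
\Sigma^{-2+2\alpha}\nabla K\R(n)\;\simeq\;\Sigma^{V}\nabla K\R(n),\qquad V=-2+2\alpha-2(1+\alpha)(2^n-1),
\]
and the underlying non-equivariant dimension of $V$ is $-4(2^n-1)=-2^{n+2}+4$, matching the target integer shift of the corollary.

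The main obstacle is that categorical fixed points do not commute with $RO(\Z/2)$-graded suspensions in general, so passing from the above $RO(\Z/2)$-shifted spectrum to the integer-shifted $\Sigma^{-2^{n+2}+4}\nabla^e KO(n)$ on fixed points is not automatic. One should use the slice-theoretic description of $K\R(n)$ employed in the proof of Theorem~\ref{thm:selfand} --- its slices being copies of $H\mz$ --- to compare the two fixed-point spectra directly, either via their slice spectral sequences or via cofibre sequences built from $\Z/2_+\to S^0\to S^\alpha$ combined with the $v_n$-periodic $M\R$-module structure on $K\R(n)$.
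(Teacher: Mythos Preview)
Your overall strategy matches the paper's: apply $(-)^{\Z/2}$ to Theorem~\ref{thm:selfand}, identify $(\nabla K\R(n))^{\Z/2}$ with $\nabla^e KO(n)$, and turn the $RO(\Z/2)$-graded shift into an integer shift using periodicities of $K\R(n)$. The paper handles your first step exactly as you suggest, via Proposition~\ref{pro:fixeddual} (which needs the completeness of $K\R(n)$ established in Lemma~\ref{lemma:completion}, not a boundedness hypothesis).

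There is, however, a genuine gap in your second step. Multiplying by $v_n^{-2}$ alone does \emph{not} convert $\Sigma^{-2+2\alpha}$ into an integer suspension: your representation $V=-2+2\alpha-2(1+\alpha)(2^n-1)$ has $\alpha$-component $4-2^{n+1}\neq 0$, so $\Sigma^V$ is still a genuinely $RO(\Z/2)$-graded suspension, and knowing that $\dim V=-2^{n+2}+4$ is irrelevant once you pass to fixed points. The paper closes this gap with an additional periodicity lemma: $K\R(n)$ is $(2^{n+1}-2^{n+1}\alpha)$-periodic, because the class $\sigma^{-2^{n+1}}v_n$ is a permanent cycle in the slice spectral sequence (this uses \cite[Corollary~9.13]{HHR}) and multiplication by $\sigma^{-2^{n+1}}$ is an isomorphism on the $F(E\Z/2_+,H\mz)$-coefficients. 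Combining this $\sigma^{-2^{n+1}}$-periodicity with $v_n^{2}$ gives the chain
\[
\nabla K\R(n)\;\cong\;\Sigma^{2-2\alpha}K\R(n)\;\cong\;\Sigma^{-2^{n+1}+2+(2^{n+1}-2)\alpha}K\R(n)\;\stackrel{v_n^{2}}{\cong}\;\Sigma^{4-2^{n+2}}K\R(n),
\]
which is now an honest integer suspension, so fixed points commute trivially and no further slice or cofibre-sequence argument is needed. Your closing paragraph correctly senses that something is missing, but the resolution is this extra $(2^{n+1}-2^{n+1}\alpha)$-periodicity, not a direct slice comparison of the two fixed-point spectra.
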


\textbf{Organisation of the paper:}
The key ingredient of this result is the study of equivariant Anderson duality, which is the subject of section \ref{sec:anderson}. The corollary then follows from the strong completion property for $K\R(n)$ given in Lemma \ref{lemma:completion}, and the good relationship between Anderson duality and fixed points for complete $\Z/2$-spectra (Proposition \ref{pro:fixeddual}). The definition of the equivariant Anderson duality functor is straightforward from the non-equivariant definition (see subsection \ref{sub:anddual}), but its behaviour with Eilenberg-MacLane spectra is richer than in the non-equivariant case: it acts non-trivially on equivariant Eilenberg-MacLane spectra, even those with finitely generated torsion free coefficients. We show in Proposition \ref{pro:relation:dualities} that it is a topological version of the Mackey functor duality of \cite[chapter 4]{TW95}.
This gives a nice interpretation of the symmetry appearing in various cohomology rings. For example, if $H\mf$ denotes the Eilenberg-MacLane $\Z/2$-spectrum with coefficients in the constant Mackey functor $\F$, the symmetries of the bigraded Mackey functor $H\mf^{\star}$, appearing {\it e.g.} \cite{FL04}, fits nicely in this context.

These results allow us to use duality arguments on the $E_2$-page of the slice spectral sequence for $K\R(n)$, and build a morphism of $\Z/2$-spectra $$ K\R(n) \stackrel{\cong}{\rightarrow} \Sigma^{-2+2\alpha} \nabla K\R(n)$$ that realizes an isomorphism on the $E_2$-page of the spectral sequence.

The efficiency of our approach resides in the fact that the comparison between the two spectral sequences do not involve any more computation than the equivariant homology of a point.

\tableofcontents

\section{Elements of $\Z/2$-equivariant stable homotopy theory}

\subsection{Equivariant stable homotopy theory and Mackey functors} \label{sub:conventions}

We refer to \cite{MM} for the constructions and definitions in the equivariant stable homotopy category. 

\begin{nota}
Let $\Z/2\T$ be the category of $\Z/2$-spaces and $\Z/2\H$ its homotopy category with respect to the usual fine model structure. It is called {\it~the $\Z/2$-equivariant homotopy category}.
\end{nota}

We now define the spheres and suspension functors we need to set up the equivariant stable homotopy category.

\begin{de} \label{de:repsphere}
A {\em representation sphere} is a pointed $\Z/2$-space of the form $S^V$ (the one point compactification of $V$), for $V$ an orthogonal representation of $\Z/2$.
\end{de}

\begin{de}
The {\it~suspension functors} are the functors of the form $$S^V \wedge (-) : \Z/2\T \rightarrow \Z/2\T,$$
for $V$ an orthogonal representation of $\Z/2$. These functors are denoted $\Sigma^V$.
\end{de}

Forcing the suspension functors $\Sigma^V$ to be invertible up to weak equivalence, for all orthogonal representation $V$, yields the category of $\Z/2$-equivariant spectra (precisely, the category of $\Z/2$-equivariant orthogonal spectra indexed over a complete universe).

\begin{nota}
 Let $\Z/2\Sp$ be the category of $\Z/2$-equivariant spectra, and $\Z/2\sh$ the {\it~$\Z/2$-equivariant stable homotopy category, i.e.} its homotopy category. Let $\Sigma^{\infty} : \Z/2\T \rightarrow \Z/2\sh$ be the functor defined by the universal property of $\Z/2\Sp$.
 \end{nota}

Since the functors $S^V \wedge (-)$ are now weakly invertible in the equivariant stable homotopy category, there is a $\Z/2$-spectrum, $S^{-V}$, characterised by the existence of a weak equivalence $S^V \wedge S^{-V} \cong S^0$. This assembles to a nice functor $S^{(-)}$, from the group completion of the monoid of orthogonal representations to $\Z/2\sh$.

\begin{de}
Let $RO(\Z/2)$ be the {\it~real representation ring of $\Z/2$}, that is the group completion of the monoid of orthogonal representation of $\Z/2$, with the direct sum of representations as the addition, and the tensor product as multiplication.
\end{de}

Recall that $RO(\Z/2)$ is a free abelian group on two generators $\Z\{1,\alpha\}$, where $1$ stands for the one dimensional trivial representation, and $\alpha$ for the one dimensional sign representation.

The two $\Z/2$-spectra $\Z/2_+ = \Sigma^{\infty} \Z/2_+$ and $\frac{\Z/2}{\Z/2}_+ = \Sigma^{\infty}S^0$ plays a particular role in this category.

\begin{de}
Let $\Or$ be the full subcategory of $\Z/2\sh$ whose objects are $\Z/2_+$ and $\frac{\Z/2}{\Z/2}_+$.
\end{de}

\begin{de}
A {\it~Mackey functor} is an additive functor $\Or^{op} \rightarrow \ab$. Let $\M$ be the category of Mackey functors and natural transformations between them.
Let $\M^{RO(\Z/2)}$ be the category of $RO(\Z/2)$-graded Mackey functors.
\end{de}

The two stable maps $\Z/2_+ \rightarrow S^0$, which collapses $\Z/2$ onto the non base point of $S^0$, and the transfer map $S^0 \rightarrow \Z/2_+$ (which exists only in the stable category) induces two morphisms of abelian groups, for all Mackey functor $M$:
$$ \rho : M(S^0) \rightarrow M(\Z/2_+)$$
and 
$$\tau : M(\Z/2_+) \rightarrow M(S^0)$$
called the restriction and the transfer of $M$, respectively (see \cite[IX,4-5]{May96}).

Let $X,Y$ be $\Z/2$-spectra. The stable homotopy classes of morphisms $[X,Y]_{\star}$ is naturally a $RO(\Z/2)$-graded Mackey functor via the following construction: for all $o \in \Or$,
$$[X,Y]_V(o) := [o \wedge \Sigma^V X, Y]^{\Z/2},$$
where $[-,-]^{\Z/2}$ is the abelian group of equivariant stable homotopy classes of maps.

With the previous notations, $[-,-]_{\star}$ defines a functor
$$\Z/2\sh^{op} \times \Z/2\sh \rightarrow \mathcal{M}^{RO(\Z/2)}.$$

\begin{de}
Let $E$ be a $\Z/2$-spectrum. The {\it~homotopy Mackey functor} of $E$ is by definition the $RO(\Z/2)$-graded Mackey functor $\underline{\pi}_{\star}(E) := [S^0,E]_{\star}$.
\end{de}

\begin{nota}
To ease the notations, for a Mackey functor $M \in \M$, we denote $M_{e}$, or $M^e$ if the subscript is already taken, the abelian group $M(\Z/2_+)$. Similarly, $M_{\Z/2}$, or $M^{\Z/2}$ if the subscript is already taken, denotes the abelian group $M(S^0)$.
\end{nota}

The interested reader can consult \cite{FL04} for Mackey functors and the relationship between Mackey functors and equivariant stable homotopy theory. In particular, we have the following graphical representation of Mackey functors:

\begin{nota} \label{nota:mackey}
Let $M$ be a Mackey functor. Let $\theta_M :  M_e \rightarrow M_e$ the action of the non-trivial element of $\Z/2$ on $M_e$, we represent $M$ by the following diagram:
$$\xymatrix{ M_{\Z/2}  \ar@/^/[d]^{ \rho_M } \\ 
M_e. \ar@/^/[u]^{\tau_M} } $$
Observe that, since $\theta_M = \rho_M\tau_M -1$ this diagram suffices to determine $M$, and in particular the action of $\Z/2$ on $M_e$.
\end{nota}

We end this section with some properties of the category of Mackey functors.

The following definition of the monoidal structure on Mackey functors for the group $\Z/2$ is taken from \cite[p.11]{FL04}.

\begin{de} \label{de:convolution}
Let $M,N \in \mathcal{M}$. Define $M \boxtimes N$ to be the diagram of abelian groups: 
$$ \xymatrix{ (M_{\Z/2} \otimes N_{\Z/2}) \oplus (M_e \otimes N_e)/  \simeq \ar@/^/[d]^{ ( \rho_M \otimes \rho_N, tr)} \\ 
M_e \otimes N_e, \ar@/^/[u]^{ i_2} \ar@(dl,dr)[]_{ \theta_M \otimes \theta_N} } $$
where the map $tr$ is the trace map for the $\Z/2$-action, and $i_2$ is induced by the inclusion. 
The equivalence relation $ \simeq$ is generated by $$m_e \otimes \tau_N n_{\Z/2} \simeq \rho_M m_e \otimes n_{\Z/2}$$ and $$\tau_M m_{\Z/2} \otimes n_e \simeq m_{\Z/2} \otimes \rho_N n_e$$ with evident notations.
This is part of a closed symmetric monoidal structure on the category of Mackey functors, and we denote $\underline{Hom}_{\M}(-,-)$ the internal hom functor.
\end{de}

\begin{pro}[\emph{\cite{FL04}}] \label{pro:monoidal}
The product $ \boxtimes$ gives $ \mathcal{M}$ the structure of a closed symmetric monoidal category. Let $A$ be the unit for this structure. The Mackey functor $A$ is called the Burnside ring Mackey functor, and is determined by the diagram

$$\xymatrix{ \Z \oplus \Z  \ar@/^/[d]^{ (1,2) } \\ 
\Z. \ar@/^/[u]^{(0,1)} } $$

In particular, there are isomorphisms $id_{\M} \boxtimes A \cong id_{\M}\cong A \boxtimes id_{\M}$ and $\underline{Hom}_{\M}(A,-) \cong id_{\M}$.
\end{pro}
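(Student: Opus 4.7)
The plan is to verify the three assertions in sequence: (i) that $\boxtimes$ is a symmetric monoidal product on $\M$, (ii) that the Burnside Mackey functor $A$ described by the diagram is its unit, and (iii) that $\M$ is closed and $\underline{Hom}_{\M}(A,-) \cong id_{\M}$. The most conceptual route is to identify $\M$ with the category of additive presheaves on the effective Burnside category $\mathcal{B}$ of $\Z/2$, whose objects are finite $\Z/2$-sets and whose morphisms are generated by equivariant maps and transfers. Under this identification, $\boxtimes$ is the Day convolution product associated with the symmetric monoidal structure on $\mathcal{B}$ given by disjoint union of finite $\Z/2$-sets (equivalently, cartesian product, since $\mathcal{B}$ is additive). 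Day convolution on additive presheaves is automatically closed symmetric monoidal, with unit the representable presheaf at the monoidal unit, and internal hom satisfying the expected tensor-hom adjunction.

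The monoidal unit of $\mathcal{B}$ is the one-point $\Z/2$-set $\Z/2/\Z/2$, so the unit of $\boxtimes$ is the representable $\mathcal{B}(-,\Z/2/\Z/2)$. A direct computation of this representable using restriction and transfer in $\mathcal{B}$ identifies its value at $\Z/2/\Z/2$ with the Burnside ring $\Z\oplus\Z$ (spanned by $[\Z/2/\Z/2]$ and $[\Z/2]$) and its value at $\Z/2$ with $\Z$, with structure maps precisely the diagram displayed in the statement. This is the Burnside Mackey functor $A$. For the final isomorphism, the tensor-hom adjunction plus the already-proved unitality gives, for every $P \in \M$,
$$\Hom_{\M}(M, \underline{Hom}_{\M}(A, P)) \cong \Hom_{\M}(M \boxtimes A, P) \cong \Hom_{\M}(M, P)$$
naturally in $M$, so the Yoneda lemma forces $\underline{Hom}_{\M}(A,P) \cong P$.

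Alternatively, one can verify $M \boxtimes A \cong M$ directly from Definition \ref{de:convolution}: at the $e$-level this is $M_e \otimes \Z \cong M_e$, while at the $\Z/2$-level one starts with $\bigl(M_{\Z/2}\otimes(\Z\oplus\Z)\bigr)\oplus M_e$ and uses $\tau_A(1)=(0,1)$ together with the relation $m_e \otimes \tau_A(1) \simeq \rho_M(m_e)\otimes(0,1)$ to collapse the second $\Z$-summand and the free part of $M_e$ onto $M_{\Z/2}$; one then checks compatibility of restriction, transfer, and $\Z/2$-action with those of $M$. The main obstacle in any approach is the bookkeeping of the equivalence relation $\simeq$: a direct verification of associativity of $\boxtimes$ from the explicit formula is tedious, which is precisely why the Day convolution viewpoint is preferable. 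The cost is a one-time check that Day convolution on $\mathrm{Fun}^{\mathrm{add}}(\mathcal{B}^{op},\ab)$ translates, under the equivalence with $\M$, to the concrete formula of Definition \ref{de:convolution}; this reduces to computing the convolution of two representables and matching it against $\boxtimes$ on the corresponding Mackey functors, which is an elementary computation.
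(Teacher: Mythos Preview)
The paper does not prove this proposition; it is stated with a citation to \cite{FL04} and no argument is given. So there is no ``paper's proof'' to compare against---you have supplied a proof where the paper simply defers to the literature.

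Your Day convolution argument is the standard and correct one, and the Yoneda deduction of $\underline{Hom}_{\M}(A,-)\cong id_{\M}$ from the tensor--hom adjunction is clean. One point needs correcting, however: you write that the relevant monoidal structure on the Burnside category $\mathcal{B}$ is ``disjoint union of finite $\Z/2$-sets (equivalently, cartesian product, since $\mathcal{B}$ is additive).'' These are \emph{not} equivalent. Disjoint union is the biproduct in $\mathcal{B}$ (with unit the empty $\Z/2$-set), and Day convolution against it would have the zero Mackey functor as unit. The monoidal structure you want is the cartesian product of finite $\Z/2$-sets, which is a separate symmetric monoidal structure on $\mathcal{B}$ (not the categorical product there) with unit the one-point set $\Z/2/\Z/2$. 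Your subsequent identification of the unit as the representable at $\Z/2/\Z/2$ shows you are in fact using cartesian product; just delete the ``disjoint union'' clause and the parenthetical.

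The alternative direct verification you sketch is also viable, though as you note the bookkeeping with the relation $\simeq$ is unpleasant, and the formula in Definition~\ref{de:convolution} as printed has some notational ambiguity in the subscripts that you would need to resolve before carrying it out carefully.
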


As functors of the form $[-,E]^{\star}$, for $E \in \Z/2\Sp$, $\Z/2$-equivariant cohomology theories are functors $\Z/2\sh^{op} \rightarrow \mathcal{M}^{RO(\Z/2)}$. They are precisely the functors satisfying equivariant analogues of the Eilenberg-Steenrod axioms. 

\begin{nota}
To make the notation more readable, we will denote $E^{\star}_e(X)$ and $E^{\star}_{\Z/2}(X)$ for  $E^{\star}(X)_e$ and $E^{\star}(X)_{\Z/2}$ respectively.
\end{nota}

\subsection{Postnikov towers and ordinary cohomology theories}

Equivariant Postnikov towers provide the appropriate notion of ordinary cohomology theory.

\begin{pro} \label{pro:functorH}
The $\Z/2$-equivariant Postnikov tower defines a $t$-structure on the $\Z/2$-equivariant stable homotopy category whose heart is isomorphic to the category $\mathcal{M}$ of Mackey functors for the group $\Z/2$. In particular, one has an Eilenberg-MacLane functor
$$H : \mathcal{M} \rightarrow \Z/2\sh$$
which sends a short exact sequences of Mackey functors to a distinguished triangle of $\Z/2$-equivariant spectra.
\end{pro}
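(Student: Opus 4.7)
\emph{Plan.} The proposition breaks into two pieces: constructing the $t$-structure, and identifying its heart with $\M$.

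For the $t$-structure, define $(\Z/2\sh)_{\geq 0}$ (resp.\ $(\Z/2\sh)_{\leq 0}$) as the full subcategory of spectra $X$ with $\underline{\pi}_n(X) = 0$ for $n<0$ (resp.\ $n>0$). Closure under the appropriate shifts is immediate from the definition. To build the truncation triangles, one exploits the fact that $S^0$ and $\Sigma^{\infty}\Z/2_+$ form a set of compact generators of $\Z/2\sh$, since a map is a weak equivalence precisely when it induces an isomorphism on homotopy Mackey functors. A standard killing-homotopy argument --- iteratively attaching cells of the form $\Sigma^n S^0$ and $\Sigma^n \Z/2_+$ to annihilate unwanted homotopy Mackey functors --- then produces the truncations $\tau_{\geq 0}X$ and $\tau_{< 0}X$, together with the fibre sequence relating them. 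The orthogonality $[X,Y]^{\Z/2}=0$ for $X\in(\Z/2\sh)_{\geq 1}$ and $Y\in(\Z/2\sh)_{\leq 0}$ then follows cell-by-cell from $[\Sigma^n o,Y]^{\Z/2}=\underline{\pi}_n(Y)(o)$ for $o\in\Or$.

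For the identification of the heart with $\M$, the functor $\underline{\pi}_0$ obviously lands in $\M$. It is fully faithful on the subcategory of spectra $X$ with $\underline{\pi}_n(X)=0$ for $n\neq 0$: using a cellular model of $X$ built from $S^0$ and $\Z/2_+$, both the Hom groups and the Mackey-functor morphisms are computed by the same formula $[o,X]^{\Z/2}=\underline{\pi}_0(X)(o)$. Essential surjectivity is the crux: given $M\in\M$, choose a two-step presentation of $M$ by sums of the representable Mackey functors $A=\underline{\pi}_0(S^0)$ and $\underline{\pi}_0(\Z/2_+)$, realize the presenting map by a morphism of $\Z/2$-spectra between wedges of $S^0$ and $\Z/2_+$, take its cofibre, and apply $\tau_{\leq 0}$. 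The result is a spectrum $HM$ in the heart satisfying $\underline{\pi}_0(HM)\cong M$, and functoriality of the construction, together with full faithfulness of $\underline{\pi}_0$ on the heart, assembles this into the desired Eilenberg-MacLane functor $H:\M\rightarrow\Z/2\sh$.

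The final sentence is a formal consequence of the $t$-structure formalism. Given a short exact sequence $0\rightarrow M\rightarrow N\rightarrow P\rightarrow 0$ in $\M$, realize $M\rightarrow N$ by a morphism $HM\rightarrow HN$, take its cofibre $C$ in $\Z/2\sh$, and use the long exact sequence of homotopy Mackey functors to see that $\underline{\pi}_n(C)=0$ for $n\neq 0$ and $\underline{\pi}_0(C)\cong P$; thus $C\simeq HP$ and the distinguished triangle $HM\rightarrow HN\rightarrow HP\rightarrow \Sigma HM$ is produced. The main obstacle in the whole argument is essential surjectivity of $\underline{\pi}_0$ on the heart, because it requires realizing arbitrary Mackey functors topologically via a small-object/cellular construction adapted to the two compact generators $S^0$ and $\Z/2_+$; everything else reduces to routine $t$-structure bookkeeping.
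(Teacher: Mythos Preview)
Your proposal is correct in outline and gives a genuine direct argument, but it is a very different route from the paper's. The paper does not prove this proposition at all: it simply invokes \cite[Proposition I.7.14]{LMS} and \cite[Theorem 1.13]{Le95} and observes that the statement is their specialisation to the group $\Z/2$. In other words, the paper treats this as a known structural fact about the equivariant stable category and imports it from the literature.

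What you have written is essentially a sketch of the standard proof underlying those references: define connectivity via vanishing of homotopy Mackey functors, build truncations by iterated cell attachment along the compact generators $S^0$ and $\Z/2_+$, verify orthogonality on cells, and then identify the heart with $\M$ by realising an arbitrary Mackey functor from a two--term presentation by representables and truncating. The one place where your sketch is somewhat compressed is the full faithfulness of $\underline{\pi}_0$ on the heart: the sentence ``both the Hom groups and the Mackey--functor morphisms are computed by the same formula'' hides a short argument. One clean way to fill it in is to use the presentation $P_1\to P_0\to C$ you already built, note that $[HM,Y]=[\tau_{\leq 0}C,Y]\cong[C,Y]$ for $Y$ coconnective because $\tau_{\leq 0}$ is left adjoint to the inclusion of coconnective objects, and then read off $[C,Y]$ from the cofibre long exact sequence, using that $[\Sigma P_1,Y]=0$ and $[P_i,Y]\cong\Hom_{\M}(\underline{\pi}_0 P_i,\underline{\pi}_0 Y)$ for wedges of orbits $P_i$. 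With that adjustment the argument is complete.

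The trade--off is clear: the paper's approach is a one--line appeal to established results and keeps the exposition focused on what is new; your approach is self--contained and makes transparent exactly which features of $\Z/2\sh$ are being used (compact generation by orbits, and the identification $[\Sigma^n o,Y]^{\Z/2}=\underline{\pi}_n(Y)(o)$), at the cost of a page of routine $t$--structure bookkeeping.
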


\begin{proof}
This proposition summarize the results of \cite[Proposition I.7.14]{LMS} and \cite[Theorem 1.13]{Le95} in the particular case of the group with two elements.
\end{proof}

We now define some classical Mackey functor which play a role in this paper.

\begin{de} \label{de:partmack}
Let $C$ be an abelian group and $N$ be a $\Z[\Z/2]$-module. Denote $\frac{N}{\Z/2}$ the quotient of $N$ by the action of $\Z/2$. \\

\vspace*{0,2cm}
\begin{tabular}{|c|c||c|c|}
\hline
Notation & Mackey functor & Notation & Mackey functor \\

\hline \hline
 $\underline{C}$ &  $\xymatrix{ C \ar@/^/[d]^{ =} \\ 
C \ar@/^/[u]^{ 2} } $ & $\underline{C}^{op}$ &  $\xymatrix{ C \ar@/^/[d]^{ 2} \\ 
C \ar@/^/[u]^{=} } $\\
\hline
$[C]$ & $\xymatrix{ 0\ar@/^/[d]^{0} \\ 
C \ar@/^/[u]^{0} } $ &  $<C>$ &  $\xymatrix{ C \ar@/^/[d]^{0} \\ 
0 \ar@/^/[u]^{0} } $ \\

\hline
$R(N)$ & $ \xymatrix{ N^{\Z/2} \ar@/^/[d] \\ 
N \ar@/^/[u]^{ trace } } $ & $ L(N)$ &  $\xymatrix{ \frac{N}{\Z/2} \ar@/^/[d] \\ 
N \ar@/^/[u]^{\pi} } $\\
\hline

\end{tabular}
\vspace*{1cm}
\end{de}

\begin{rk}
\begin{itemize}
\item The notation $(-)^{op}$ is supposed to indicate that the Mackey functor $M^{op}$ is obtained from $M$ by exchanging the restriction and transfer morphisms. This is part of a more general construction which we will not make explicit here.
\item One must be careful with these diagrams, the fact that the composite $\rho \tau = trace$ implies that the action of $\Z/2$ on $[C]_{\Z/2} = C$ is by multiplying by $-1$. This construction only makes sense when the ambient group is $\Z/2$.
\item The Mackey functors whose restrictions are monomorphisms, and $\mf$, $\mz$ in particular, play a special role in this context. One reason is that equivariant Eilenberg-MacLane spectra with coefficients in these Mackey functors are exactly the $0th$-slices of Hill-Hopkins-Ravenel's slice filtration (see \cite[Proposition 4.50 (ii)]{HHR}), and that $H\mz$ is the $0th$ slice of the sphere spectrum by \cite[Corollary 4.54]{HHR}.
\end{itemize}
\end{rk}

\subsection{Some properties of Mackey functors arising as homotopy groups}

We now turn to a general result about Mackey functors obtained from cohomology theories. We first give additional structure on the $RO(\Z/2)$-graded abelian group valued functor $[-,-]^{\Z/2}_{\star}$.

\begin{de}
Let $a \in \underline{\pi}_{-\alpha}(S^0)$ be the class represented by the inclusion of fixed points $S^0 \rightarrow S^{\alpha}$. The class $a$ is called the Euler class of $\alpha$.
\end{de}

For any $\Z/2$-spectra $X$ and $Y$, the $RO(\Z/2)$-graded abelian group $[X,Y]^{\Z/2}_{\star}$ has a natural action of $\underline{\pi}_{\star}^{\Z/2}(S^0)$. Consider the natural $\Z[a]$-module structure on the  $[X,Y]^{\Z/2}_{\star}$ restricted from this action.

\begin{lemma} \label{lemma_mackey_a}
Let $E$ be a $ \Z/2$-spectrum such that the action of $2a$ on $E_{\star}^{\Z/2}$ is trivial. The $RO(\Z/2)$-graded Mackey functor $E_{\star}$ satisfies the following properties:
\begin{enumerate}
 \item $Im(a)= Ker( \rho)$.
\item $ Ker(a) = Im( \tau)$.
\item Let $x$ be an element of $E_{ \star}^{\Z/2}$. Suppose that $x$ is divisible by $a$, and that $x$ is not in $ Ker(a)$. Then $x$ induces a split monomorphism of Mackey functors $< \F> \inj \underline{ E_{ \star}}$.
\item Suppose that $ E^{e}_{\star}$ has no $2$-torsion. Then an element  $x \in E_{\star}^{\Z/2}$ is divisible by $a$ if and only if $2x=0$.
\end{enumerate}
\end{lemma}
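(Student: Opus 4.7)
The plan is to derive all four assertions from long exact sequences associated to two dual cofiber sequences of $\Z/2$-spectra involving the Euler class $a$. First I would record the cofiber sequence
$$\Z/2_+ \xrightarrow{\epsilon} S^0 \xrightarrow{a} S^\alpha$$
(with $\epsilon$ the augmentation) together with its Spanier-Whitehead dual
$$S^{-\alpha} \xrightarrow{\cdot a} S^0 \xrightarrow{\tau} \Z/2_+,$$
obtained from the self-duality $D(\Z/2_+) \simeq \Z/2_+$; here the middle map in the dual sequence induces the Mackey transfer. Applying $[-,E]^{\Z/2}_\star$ to each then yields, for every $V \in RO(\Z/2)$, the two exact sequences
$$E^{\Z/2}_{V+\alpha} \xrightarrow{\cdot a} E^{\Z/2}_V \xrightarrow{\rho} E^e_V \quad\text{and}\quad E^e_V \xrightarrow{\tau} E^{\Z/2}_V \xrightarrow{\cdot a} E^{\Z/2}_{V-\alpha}.$$
Assertions (1) and (2) are then immediate from exactness.

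For (3), I would write $x = ay$ with $y \in E^{\Z/2}_\star$. The hypothesis that $2a$ acts trivially forces $2x = (2a)y = 0$, and part (1) gives $\rho(x) = 0$; these are precisely the compatibility conditions needed for $1 \mapsto x$ to define a map of Mackey functors $<\F> \to E_V$, and the map is injective because $x \notin \mathrm{Ker}(a)$ gives $ax \neq 0$, hence $x \neq 0$. The delicate step is producing a splitting, and here I would exploit the hypothesis $2a = 0$ a second time to observe that $\mathrm{Im}(\cdot a) \subseteq E^{\Z/2}_{V-\alpha}$ is automatically an $\F$-vector space, since $2 \cdot ay = 0$ for every $y$. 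Because $ax$ is a nonzero vector in this $\F$-space, I can extend $\{ax\}$ to an $\F$-basis and pick an $\F$-linear functional $\pi : \mathrm{Im}(\cdot a) \to \F$ with $\pi(ax) = 1$. The composite $E^{\Z/2}_V \twoheadrightarrow \mathrm{Im}(\cdot a) \xrightarrow{\pi} \F$ then sends $x$ to $1$ and kills $\mathrm{Ker}(\cdot a) = \mathrm{Im}(\tau)$ by (2); this extends uniquely, via the zero map on $E^e_V$, to the desired Mackey functor retraction $E_V \to <\F>$.

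For (4), the forward direction is again the identity $2 \cdot ay = 0$. Conversely, if $2x = 0$ then $2\rho(x) = \rho(2x) = 0$, so $\rho(x)$ is a $2$-torsion element of $E^e_\star$; the hypothesis that $E^e_\star$ is $2$-torsion-free forces $\rho(x) = 0$, and (1) places $x$ in $\mathrm{Im}(a)$.

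The main obstacle I anticipate is the splitting step of (3). While the Mackey functor map $<\F> \to E_V$ is routine once the two long exact sequences are in hand, the non-obvious move is recognizing that the numerical hypothesis $2a = 0$ promotes $\mathrm{Im}(\cdot a)$ to an $\F$-vector space, thereby reducing the existence of the splitting to an elementary basis-extension argument.
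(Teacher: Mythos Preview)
Your proof is correct and follows the same strategy as the paper: both derive (1) and (2) from exact sequences attached to the cofiber sequence $\Z/2_+ \to S^0 \to S^\alpha$ (you dualize the sequence, the paper swaps the variance of the functor, which amounts to the same thing), and (4) is handled identically in both. For (3) you actually do more than the paper: the paper records that $x\notin\mathrm{Im}(\tau)$, $\rho(x)=0$, $2x=0$ and then simply asserts the monomorphism is split, whereas you exhibit the retraction explicitly by factoring through the $\F$-vector space $\mathrm{Im}(\cdot a)\cong E_V^{\Z/2}/\mathrm{Im}(\tau)$ and choosing a linear functional there---this is exactly the detail the paper's terse argument is implicitly leaning on.
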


\begin{proof}
 The two first points are shown using similar methods. Recall that there is a cofiber sequence
 $$ \Z/2_+ \rightarrow S^0 \rightarrow S^{ \alpha}.$$

\begin{enumerate}
 \item Apply the exact functor $ [ -, \Sigma^{ - \star} E]^{\Z/2}$ to the previous cofiber sequence. Then, we have isomorphisms:
$$ \xymatrix{  [S^{ \alpha}, \Sigma^{- \star} E]^{\Z/2} \ar[r] \ar@{ = }[d] & [S^0, \Sigma^{- \star} E]^{\Z/2} \ar[r] \ar@{ = }[d] & [ \Z/2_+, \Sigma^{- \star} E]^{\Z/2} \ar@{ = }[d] \\
\underline{ \pi}_{ \star+ \alpha}^{\Z/2}(E) \ar[r]^a & \underline{ \pi}_{ \star}^{\Z/2}(E) \ar[r]^{ \rho} & \underline{ \pi}_{ \star}^{e}(E) \\ }$$
where the rows are exact.
\item Apply the exact functor $ [ S^{ \star}, (-) \wedge E]^{\Z/2}$ to the cofiber sequence. Then, we have isomorphisms:
$$ \xymatrix{  [S^{ \star}, \Z/2_+ \wedge E]^{\Z/2} \ar[r] \ar@{ = }[d] & [S^{ \star}, E]^{\Z/2} \ar[r] \ar@{ = }[d] & [ \Sigma^{ \star - \alpha}, E]^{\Z/2} \ar@{ = }[d] \\
\underline{ \pi}_{ \star}^{e}(E) \ar[r]^{ \tau} & \underline{ \pi}_{ \star}^{\Z/2}(E) \ar[r]^{a} & \underline{ \pi}_{ \star}^{\Z/2}(E) \\ }$$
where the rows are exact.
\item We use the two first points for the element $x$. By the second point, $x \not\in Im( \tau)$, and by the second one, $x \in Ker( \rho)$. Now, the fact that $x$ is divisible by $a$ implies that $2x = 0$ because $2a=0$. Thus, the Mackey functor monomorphism
 $$ \xymatrix{\F \ar@/^/[d] \ar@{^(->}[rr] & &  E_{ \star}^{\Z/2} \ar@/^/[d] \\ 
0 \ar@/^/[u] \ar[rr] &  & E_{ \star}^e, \ar@/^/[u] } $$ induced by $x$ is split.
\item The last point is a consequence of the fact that $2a=0$ and the first point.
\end{enumerate}
\end{proof}

\begin{rk}
Even though the hypothesis of the last proposition could seem restrictive, the class of $\Z/2$-spectra such that $2a$ acts as $0$ contains very fundamental examples, the motivating class for our purpose are $M\R$-modules, since $2a = 0$ in $M\R_{-\alpha}$ (this is a consequence of the computations of \cite{HHR} by the slice spectral sequence).
\end{rk}

\subsection{The isotropy separation cofibre sequence}

We conclude this section with the {\em isotropy separation} cofibration sequence.

\begin{de}
Let $E\Z/2$ be the universal $\Z/2$-space. Let $\widetilde{E\Z/2}$ be the cofiber of the map $E\Z/2_+ \rightarrow S^0$ collapsing $E\Z/2$ to the non base point of $S^0$.
\end{de}

\begin{pro}
There is a $\Z/2$-homotopy equivalence $\widetilde{E\Z/2} = \mathrm{colim}(S^{k \alpha})$, where the colimit is taken over the inclusions $S^{k \alpha} \rightarrow S^{(k+1)\alpha}$.
\end{pro}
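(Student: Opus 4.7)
The strategy is to identify both $\widetilde{E\Z/2}$ and $X := \mathrm{colim}_k S^{k\alpha}$ up to $\Z/2$-equivariant weak equivalence by computing their geometric and $\Z/2$-fixed points, and then to produce an explicit equivariant map between them whose fixed-point and underlying-space restrictions are weak equivalences. The equivariant Whitehead theorem will then conclude, since both spaces are $\Z/2$-CW complexes.

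First I would compute the fixed-point data of $X$. For each $k$, $(S^{k\alpha})^{\Z/2} = S^0$ (the one-point compactification of $(k\alpha)^{\Z/2} = 0$), the underlying space is $S^k$, and the inclusion $S^{k\alpha} \hookrightarrow S^{(k+1)\alpha}$ induced by the inclusion $k\alpha \hookrightarrow (k+1)\alpha$ is the identity $S^0 \to S^0$ on fixed points and the standard equatorial inclusion $S^k \hookrightarrow S^{k+1}$ underlying. Since geometric fixed points and the forgetful functor commute with sequential colimits along closed inclusions of well-pointed $\Z/2$-spaces, this yields $X^{\Z/2} \simeq S^0$ and $X^e \simeq S^\infty \simeq *$. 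On the other hand, from the defining cofibre sequence $E\Z/2_+ \to S^0 \to \widetilde{E\Z/2}$ and the fact that $E\Z/2^{\Z/2} = \emptyset$ while $E\Z/2$ has contractible underlying space, one reads off $\widetilde{E\Z/2}^{\Z/2} \simeq S^0$ and $\widetilde{E\Z/2}^e \simeq *$, so the fixed-point invariants of the two spaces already agree.

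Next I would construct an equivariant comparison map $\varphi: \widetilde{E\Z/2} \to X$. The Euler classes $a^k: S^0 \to S^{k\alpha}$ of Definition above are compatible with the structure maps of the colimit and therefore assemble to a natural pointed $\Z/2$-map $\iota: S^0 \to X$. To factor $\iota$ through $\widetilde{E\Z/2}$ it suffices to show that the composite $E\Z/2_+ \to S^0 \xrightarrow{\iota} X$ is equivariantly null-homotopic. Since $E\Z/2$ is built from free $\Z/2$-cells $\Z/2 \times D^n$ and the underlying space $X^e \simeq *$ is contractible, equivariant obstruction theory produces such a null-homotopy cell by cell (each obstruction lies in a homotopy group of the underlying space, hence vanishes). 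The resulting factorization is the desired map $\varphi$.

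Finally I would verify that $\varphi$ is an equivariant weak equivalence. On underlying spaces both source and target are contractible, so there is nothing to check. On fixed points, the composite $S^0 \hookrightarrow \widetilde{E\Z/2}^{\Z/2} \xrightarrow{\varphi^{\Z/2}} X^{\Z/2} = S^0$ equals the map underlying $S^0 \to S^{k\alpha}$ on fixed points, which is the identity by construction; together with $\widetilde{E\Z/2}^{\Z/2} \simeq S^0$, this forces $\varphi^{\Z/2}$ to be a weak equivalence, and equivariant Whitehead completes the argument. The only delicate step is the production of an \emph{equivariant} null-homotopy in the construction of $\varphi$; this is what forces the cellular obstruction-theoretic argument rather than a purely underlying-space one.
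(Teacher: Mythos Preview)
The paper states this proposition without proof, treating it as a standard fact about universal spaces; there is nothing to compare against directly.

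Your argument is correct. That said, there is a more direct route that bypasses the obstruction-theoretic construction of $\varphi$. One simply recognises $S(\infty\alpha) = \mathrm{colim}_k S(k\alpha)$ as a model for $E\Z/2$: it is a free $\Z/2$-CW complex whose underlying space $S^{\infty}$ is contractible, and these two properties characterise $E\Z/2$ up to $\Z/2$-homotopy equivalence. Then the cofibre sequences
\[
S(k\alpha)_+ \longrightarrow S^0 \longrightarrow S^{k\alpha}
\]
(which the paper itself invokes later, e.g.\ in the computation of $H\mz_\star$) pass to the colimit to give exactly the defining cofibre sequence $E\Z/2_+ \to S^0 \to \mathrm{colim}_k S^{k\alpha}$, identifying the last term with $\widetilde{E\Z/2}$. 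This avoids having to manufacture the comparison map $\varphi$ and check it on fixed points separately. Your approach has the minor conceptual advantage of working from the universal characterisation of $\widetilde{E\Z/2}$ rather than choosing a specific model for $E\Z/2$; the price is the extra paragraph producing the equivariant null-homotopy.
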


\begin{de}
The isotropy separation cofibration sequence is the cofibration of pointed $\Z/2$-spaces
$$ E\Z/2_+ \rightarrow S^0 \rightarrow \widetilde{E\Z/2}.$$
\end{de}

In particular, for all $\Z/2$-spectrum $E$, there is a diagram, whose lines and columns are cofibre sequences

$$ \xymatrix{ E\Z/2_+ \wedge F(\widetilde{E\Z/2}, E) \ar[r] \ar[d] & F(\widetilde{E\Z/2}, E) \ar[d] \ar[r]^{\cong} & \widetilde{E\Z/2} \wedge F(\widetilde{E\Z/2}, E) \ar[d]  \\
E\Z/2_+ \wedge E \ar[d]^{\cong} \ar[r] & E \ar[d] \ar[r] & \widetilde{E\Z/2} \wedge E \ar[d]\\
E\Z/2_+ \wedge F(E\Z/2_+, E) \ar[r] & F(E\Z/2_+, E) \ar[r] & \widetilde{E\Z/2} \wedge F(E\Z/2_+, E),\\ }$$

where $F(-,-)$ is the function $\Z/2$-spectrum.

\begin{de}
A $\Z/2$-spectrum $E$ is called {\it~complete} if the map $$E\Z/2_+ \wedge E \rightarrow E$$ is a weak equivalence, or equivalently if $\widetilde{E\Z/2} \wedge E \cong 0$.
\end{de}

\section{Equivariant Anderson duality and Mackey functor duality} \label{sec:anderson}

\subsection{Mackey functor duality} We now turn to the appropriate notion of duality in the category of Mackey functors. It coincides with the duality studied in \cite[chapter 4]{TW95} when working over a field.

Recall from \cite{LMS} that the category of orbits $ \mathcal{O}$ (see subsection \ref{sub:conventions}) is self-dual, precisely there is an isomorphism of additive categories $ \phi : \mathcal{O}^{op} \rightarrow \mathcal{O}$.

\begin{lemma}
Let $F: \ab^{op} \rightarrow \ab$ an additive functor. Then the assignment $M \in \M \mapsto F \circ M^{op} \circ \phi$ defines a functor $\M F : \M^{op} \rightarrow \M$.
\end{lemma}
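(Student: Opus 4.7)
The plan is a straightforward formal verification of variances. The content of the lemma amounts to saying that the three-fold composition $F \circ M^{op} \circ \phi$ is well-defined as a Mackey functor, and that it is contravariantly functorial in $M$; both parts reduce to carefully tracking the conventions for opposite functors and opposite natural transformations.

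First I would check that $F \circ M^{op} \circ \phi$ lands in the category of Mackey functors. Given $M : \mathcal{O}^{op} \to \ab$, the formal opposite $M^{op} : \mathcal{O} \to \ab^{op}$ has the same action on objects and morphisms (only the source and target categories are replaced by their opposites), and inherits additivity from $M$. Precomposing with the additive equivalence $\phi : \mathcal{O}^{op} \to \mathcal{O}$ gives an additive functor $M^{op} \circ \phi : \mathcal{O}^{op} \to \ab^{op}$, and postcomposing with the additive functor $F : \ab^{op} \to \ab$ yields an additive functor $\mathcal{O}^{op} \to \ab$, i.e., a Mackey functor.

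Second I would verify contravariant functoriality in $M$. A morphism $\eta : N \to M$ in $\M$ (equivalently, a morphism $M \to N$ in $\M^{op}$) has a formal opposite $\eta^{op} : M^{op} \to N^{op}$, viewed as a natural transformation between functors $\mathcal{O} \to \ab^{op}$. Right-whiskering by $\phi$ and then left-whiskering by $F$ produces a natural transformation $F \circ M^{op} \circ \phi \to F \circ N^{op} \circ \phi$, i.e., a morphism $\M F(M) \to \M F(N)$ in $\M$. Since whiskering respects identities and composition of natural transformations, and $(-)^{op}$ is compatible with composition in the appropriate contravariant sense, $\M F$ is a functor $\M^{op} \to \M$.

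The only real obstacle is bookkeeping: there are three separate layers of opposite categories in play, namely $\mathcal{O}^{op}$ in the definition of Mackey functors, $\ab^{op}$ as the source of $F$, and $\M^{op}$ as the source of the functor $\M F$ being constructed. Once the convention $(-)^{op}$ on a functor $T : \mathcal{C} \to \mathcal{D}$ produces $T^{op} : \mathcal{C}^{op} \to \mathcal{D}^{op}$ with the same underlying data, and on a natural transformation reverses direction, the verification is a formal manipulation with no genuine content beyond the self-duality of $\mathcal{O}$ supplied by $\phi$.
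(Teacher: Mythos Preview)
Your proposal is correct and follows essentially the same approach as the paper, which simply observes that the composite is obviously an additive functor $\mathcal{O}^{op} \to \ab$. Your treatment is in fact more thorough: the paper's one-line proof only records that the output is a Mackey functor and leaves the contravariant functoriality in $M$ implicit, whereas you spell out the whiskering argument explicitly.
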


\begin{proof}
The composite is obviously an additive functor $\mathcal{O}^{op} \rightarrow \ab$.
\end{proof}

This lemma is used to define various endofunctors of the category of Mackey functors.
\begin{de}
\begin{enumerate}
\item Let
$$\nabla : \M^{op} \rightarrow \M$$ be the functor $\M(-)^{\vee}$, where $(-)^{\vee} = Hom_{\Z}(-,\Z)$. Explicitly, it sends a Mackey functor
$$\xymatrix{ M_{\Z/2}  \ar@/^/[d]^{ \rho_M } \\ 
M_e \ar@/^/[u]^{\tau_M} } $$
to
$$\xymatrix{ M_{\Z/2}^{\vee}  \ar@/^/[d]^{ \tau_M^{\vee} } \\ 
M_e^{\vee}, \ar@/^/[u]^{\rho_M^{\vee}} } $$
and acts similarly on morphisms.
\item Let
$$\nabla_{tors} : \M^{op} \rightarrow \M$$ be the functor $\M Ext^1(-,\Z)$. Explicitly, it sends a Mackey functor
$$\xymatrix{ M_{\Z/2}  \ar@/^/[d]^{ \rho_M } \\ 
M_e \ar@/^/[u]^{\tau_M} } $$
to
$$\xymatrix{ Ext^1(M_{\Z/2},\Z)  \ar@/^/[d]^{ Ext^1(\tau_M, \Z) } \\ 
Ext^1(M_e, \Z), \ar@/^/[u]^{Ext^1(\rho_M,\Z)}. } $$
\end{enumerate}
\end{de}

The functors $(-)^{\vee}$ and $Ext^1_{\Z}(-,\Z)$ behaves particularly well with respect to finitely generated objects.

\begin{de}
Let $\M^f$ be the full subcategory of $\M$ whose objects are Mackey functors which are level-wise finitely generated abelian groups.
\end{de}

\begin{de}
Let $\freem$ (resp. $\torsm$) be the full subcategory of $\M^f$ whose objects are Mackey functors $M$ such that $M_e$ and $M_{\Z/2}$ are free $\Z$-modules (resp. are torsion $\Z$-modules).
\end{de}

\begin{de}
Let $\free : \M^f \rightarrow \freem$ be the functor which sends a Mackey functor $M$ to the image of $M \rightarrow M \otimes_{\Z} \mathbb{Q}$ and $\tors : \M^f \rightarrow \torsm$ the functor which sends $M$ to the kernel of $M \rightarrow M \otimes_{\Z} \mathbb{Q}$.
\end{de} 

\begin{lemma}
There is a unique functorial short exact sequence
$$ 0 \rightarrow \tors \rightarrow Id_{\M^f} \rightarrow \free \rightarrow 0$$
of functors $\M^f \rightarrow \M^f$.
\end{lemma}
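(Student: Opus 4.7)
The plan is to build the short exact sequence level-wise from the canonical torsion/torsion-free decomposition of finitely generated abelian groups, and then to observe that both functoriality and uniqueness are forced by the corresponding level-wise statements. The only substantive check is that the Mackey structure maps are compatible with this decomposition.

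I would first verify that for any $M \in \M^f$, the subgroups $\tors(M)_e \subset M_e$ and $\tors(M)_{\Z/2} \subset M_{\Z/2}$ (which coincide with the honest torsion subgroups of the finitely generated abelian groups $M_e$ and $M_{\Z/2}$) assemble into a sub-Mackey functor of $M$. This reduces to the elementary fact that any group homomorphism $f : A \to B$ sends torsion to torsion: applied to $\rho_M$, $\tau_M$, and the Weyl action $\theta_M = \rho_M\tau_M - 1$, it guarantees that these structure maps restrict to $\tors(M)$ and descend to the quotient $M/\tors(M) = \free(M)$. This yields the level-wise short exact sequence
$$ 0 \to \tors(M) \to M \to \free(M) \to 0, $$
which is exact in $\M^f$ because exactness there is tested pointwise. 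Functoriality in $M$ is then immediate, since any morphism of Mackey functors preserves torsion level-wise, hence restricts to a morphism of torsion parts and descends to a morphism of free parts.

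For uniqueness, I would consider any other such sequence $0 \to T \to Id_{\M^f} \to F \to 0$ with $T(M) \in \torsm$ and $F(M) \in \freem$. At each level, $T(M)_?$ is a torsion subgroup of $M_?$ whose quotient $F(M)_?$ is torsion-free, which forces $T(M)_? = \tors(M)_?$ as subgroups. Compatibility of this identification with the restriction, transfer and Weyl action is automatic, since on both sides these are induced by the corresponding maps of $M$. Hence $T = \tors$ and $F = \free$ as subfunctor and quotient of $Id_{\M^f}$. I do not expect any genuine obstacle here: the entire argument is a formal lift of the classical torsion short exact sequence for finitely generated abelian groups, the only non-tautological input being that group homomorphisms preserve torsion.
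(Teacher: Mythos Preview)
Your proposal is correct and follows essentially the same approach as the paper: the paper's proof is a one-liner stating that existence comes from the construction and uniqueness from the corresponding uniqueness at the level of abelian groups, which is precisely what you have spelled out in detail. Your additional verification that the Mackey structure maps preserve torsion (because group homomorphisms do) is the only point the paper leaves implicit, and you handle it correctly.
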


\begin{proof}
Unicity comes from the unicity of such an exact sequence at the level of abelian groups, existence comes from the construction.
\end{proof}

\subsection{Equivariant Anderson duality} \label{sub:anddual} We now discuss a $\Z/2$-equivariant version of Anderson duality, and its relation with Mackey functors. Anderson duality was introduced by Anderson in \cite{And69}, and used in \cite{Kai71} and more recently in \cite{Sto11,HS14}. The goal of this section is to understand equivariant Anderson duality in terms of Mackey functors.

We start by a definition of Anderson duality, following \cite{Kai71, HS14}. The first step is to define Brown-Comenetz duality, as in \cite{BC76}

\begin{pro} \label{pro:bcdual}
Let $I$ be an injective abelian group. Then the assignment 
$$(d_I)^{\star}_{\Z/2} : X \mapsto Hom_{\Z}(\underline{\pi}_{-\star}^{\Z/2}(X),I)$$
defines a cohomology theory.
\end{pro}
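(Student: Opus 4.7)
The plan is to verify the Eilenberg-Steenrod axioms for an $RO(\Z/2)$-graded cohomology theory directly from the formula. Observe that $(d_I)^{\star}_{\Z/2}$ is the composite of three functors: the $RO(\Z/2)$-graded equivariant homotopy functor $\underline{\pi}_{\bullet}^{\Z/2}:\Z/2\sh \to \ab^{RO(\Z/2)}$, the reindexing $\star \mapsto -\star$, and the contravariant functor $\Hom_{\Z}(-,I):\ab^{op}\to \ab$. In particular, contravariant functoriality is immediate, and the suspension isomorphism $(d_I)^{\star}_{\Z/2}(\Sigma^V X) \cong (d_I)^{\star-V}_{\Z/2}(X)$ follows from the corresponding identity $\underline{\pi}_{-\star}^{\Z/2}(\Sigma^V X) \cong \underline{\pi}_{-\star+V}^{\Z/2}(X)$ after applying $\Hom_{\Z}(-,I)$.

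For the long exact sequence axiom, I would start with a cofibre sequence $X \to Y \to Z$ in $\Z/2\sh$. Since $[S^0,-]^{\Z/2}_{\star}$ is a homological functor on the triangulated category $\Z/2\sh$, this produces the long exact sequence
$$ \cdots \to \pi_{-\star+1}^{\Z/2}(Z) \to \pi_{-\star}^{\Z/2}(X) \to \pi_{-\star}^{\Z/2}(Y) \to \pi_{-\star}^{\Z/2}(Z) \to \cdots $$
of abelian groups. The crucial point is that, because $I$ is injective, the functor $\Hom_{\Z}(-,I):\ab^{op}\to \ab$ is exact, so applying it yields the desired long exact sequence
$$ \cdots \to (d_I)^{\star-1}_{\Z/2}(Z) \to (d_I)^{\star}_{\Z/2}(Z) \to (d_I)^{\star}_{\Z/2}(Y) \to (d_I)^{\star}_{\Z/2}(X) \to \cdots $$
without any $\mathrm{Ext}^1$ terms appearing.

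For the wedge axiom, I would use that equivariant stable homotopy takes arbitrary wedges in $\Z/2\sh$ to direct sums of abelian groups, so $\underline{\pi}_{-\star}^{\Z/2}(\bigvee_i X_i) \cong \bigoplus_i \underline{\pi}_{-\star}^{\Z/2}(X_i)$; combining with the standard identity $\Hom_{\Z}(\bigoplus_i A_i,I) \cong \prod_i \Hom_{\Z}(A_i,I)$ converts this to the product axiom for $(d_I)^{\star}_{\Z/2}$. There is no real obstacle in this proposition: the entire argument rests on injectivity of $I$ to promote the long exact sequence of homotopy groups to one after dualisation, and all other properties are formal from the definition.
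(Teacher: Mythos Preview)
Your proposal is correct and follows the same approach as the paper: verify the equivariant Eilenberg--Steenrod axioms, with the key observation that injectivity of $I$ makes $\Hom_{\Z}(-,I)$ exact and hence preserves the long exact sequence in homotopy. The paper's proof is terser, simply citing \cite[XIII.1 and XIII.2]{May96} for the relevant axioms and noting that exactness holds because $I$ is injective, whereas you spell out each axiom explicitly.
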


\begin{proof}
This is like the classical case. The appropriate Eilenberg-Steenrod axioms are provided by \cite[XIII.1 and XIII.2]{May96}, where exactness is satisfied because $I$ is injective. 
\end{proof}

Recall from \cite[XIII.1 and XIII.2]{May96} that $\Z/2$-equivariant cohomology theories are the functors represented by $\Z/2$-spectra.

\begin{de}
Let $d_I$ be the $\Z/2$-spectrum representing $(d_I)^{\star}_{\Z/2}$.
\end{de}

Now, by Brown's representability theorem, a map between injective abelian groups $I \rightarrow J$ induces a morphism of $\Z/2$-spectra $d_I \rightarrow d_J$. We apply it to the injective resolution $$\Q \surj \Q/\Z$$ of $\Z$.

\begin{de} \label{de:anddual}
Let $d_{\Z} \in \Z/2\sh$ be the homotopy fiber of the map $d_{\mathbb{Q}} \rightarrow d_{\mathbb{Q}/\Z}$.
Let $\nabla$ be 
$$ F(-,d_{\Z}) : \Z/2\sh^{op} \rightarrow \Z/2\sh.$$
The functor $\nabla$ is called the {\it~equivariant Anderson duality functor}.
\end{de}

\begin{rk} \label{rk:nablaexact}
The functor $ \nabla$ sends cofibre sequence of spectra to cofibre sequences of spectra, because of the analogous property for the function spectrum $F(-,-)$ in both variables.
\end{rk}

Since its discovery, the aim of Anderson duality is to produce universal coefficient theorems. We now state the universal coefficient exact sequence in the $\Z/2$-equivariant setting.

\begin{pro} \label{pro:univcoeff}
Let $E$ and $X$ be $\Z/2$-spectra. Then there is a short exact sequence of Mackey functors, called the equivariant universal coefficients exact sequence, natural in $X$ and $E$:
$$ 0 \rightarrow \nabla_{tors}(E_{\star-1}(X)) \rightarrow (\nabla E)^{\star}(X) \rightarrow \nabla(E_{\star}(X)) \rightarrow 0.$$
\end{pro}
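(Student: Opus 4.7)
The plan is to apply the contravariant functor $F(E,-)$ to the defining cofibre sequence of $d_{\Z}$, and then take the long exact sequence obtained by applying $[X,\Sigma^{\star}-]^{\Z/2}$ with values in Mackey functors. The short exact sequence will then come from splicing this long exact sequence with the short exact sequences coming from the injective resolution $0\to\Z\to\mathbb{Q}\to\mathbb{Q}/\Z\to 0$.

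First I would identify, for $I$ an injective abelian group and $Y$ any $\Z/2$-spectrum, the Mackey functor $(d_I)^{\star}(Y)$ with $\M\mathrm{Hom}_\Z(-,I)$ applied to the homotopy Mackey functor $\underline{\pi}_{-\star}(Y)$. The $\Z/2$-level is the defining formula of Proposition \ref{pro:bcdual}. The $e$-level is obtained by the standard computation
\[
(d_I)^{\star}(Y)_e = [Y\wedge\Z/2_+,\Sigma^{\star}d_I]^{\Z/2} = \mathrm{Hom}_\Z(\underline{\pi}^{e}_{-\star}(Y),I),
\]
using the induction--restriction adjunction and the fact that $\underline{\pi}^{\Z/2}_{-\star}(Y\wedge\Z/2_+)=\underline{\pi}^{e}_{-\star}(Y)$. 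That the restriction and transfer of $(d_I)^{\star}(Y)$ are the $\mathrm{Hom}(-,I)$-duals of the transfer and restriction of $\underline{\pi}_{-\star}(Y)$ is then forced by the self-duality isomorphism $\phi:\mathcal{O}^{op}\to\mathcal{O}$ used in the definition of $\M F$.

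Next, by Definition \ref{de:anddual} there is a cofibre sequence $d_{\Z}\to d_{\mathbb{Q}}\to d_{\mathbb{Q}/\Z}$ in $\Z/2\sh$. Applying the (exact in each variable) function spectrum functor $F(E,-)$ gives a cofibre sequence
\[
\nabla E \longrightarrow F(E,d_{\mathbb{Q}})\longrightarrow F(E,d_{\mathbb{Q}/\Z}),
\]
and applying $[X,\Sigma^{\star}-]^{\Z/2}$ together with the adjunction $[X,\Sigma^{\star}F(E,d_I)]^{\Z/2}=(d_I)^{\star}(X\wedge E)$ yields a long exact sequence of Mackey functors
\[
\cdots\to (d_{\mathbb{Q}/\Z})^{\star-1}(X\wedge E)\to (\nabla E)^{\star}(X)\to (d_{\mathbb{Q}})^{\star}(X\wedge E)\to (d_{\mathbb{Q}/\Z})^{\star}(X\wedge E)\to\cdots
\]
Using the identification of the preceding paragraph, together with $\underline{\pi}_{-\star}(X\wedge E)=E_{-\star}(X)$, the middle map is $\M\mathrm{Hom}(-,\mathbb{Q})\to \M\mathrm{Hom}(-,\mathbb{Q}/\Z)$ applied levelwise to $E_{-\star}(X)$.

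Finally, since $\mathbb{Q}$ is injective, for every abelian group $A$ the sequence $0\to\Z\to\mathbb{Q}\to\mathbb{Q}/\Z\to 0$ induces the four-term exact sequence
\[
0\to \mathrm{Hom}(A,\Z)\to \mathrm{Hom}(A,\mathbb{Q})\to \mathrm{Hom}(A,\mathbb{Q}/\Z)\to \mathrm{Ext}^1(A,\Z)\to 0,
\]
so the kernel of $(d_{\mathbb{Q}})^{\star}(X\wedge E)\to (d_{\mathbb{Q}/\Z})^{\star}(X\wedge E)$ is $\nabla(E_{\star}(X))$ and its cokernel is $\nabla_{tors}(E_{\star}(X))$; the same holds at both levels of the Mackey functor, functorially in restriction and transfer by construction of $\M F$. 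Splicing the long exact sequence using these identifications (and reindexing the connecting term via the shift $\star-1$) produces exactly the desired short exact sequence
\[
0\to \nabla_{tors}(E_{\star-1}(X))\to (\nabla E)^{\star}(X)\to \nabla(E_{\star}(X))\to 0.
\]
The only real point of care is bookkeeping: checking that the identification of $(d_I)^\star$ is natural enough to commute with restriction and transfer, and that the connecting map in the long exact sequence genuinely shifts $\star$ by $-1$ in the conventions of the paper; once this is done, the argument is a formal consequence of the exactness of $F(-,-)$ and of the injective resolution of $\Z$.
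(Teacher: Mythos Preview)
Your proposal is correct and follows essentially the same route as the paper: apply the defining cofibre sequence $d_{\Z}\to d_{\mathbb{Q}}\to d_{\mathbb{Q}/\Z}$, map in $E\wedge X$, and identify the kernel and cokernel of $\mathrm{Hom}(-,\mathbb{Q})\to\mathrm{Hom}(-,\mathbb{Q}/\Z)$ via the injective resolution of $\Z$. The only organisational difference is that the paper treats the $\Z/2$-level and the $e$-level separately and checks compatibility with restriction and transfer at the end, whereas you package the Mackey-functor identification of $(d_I)^{\star}$ up front; both are fine.
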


\begin{proof}
We start by showing that the the sequences of abelian groups obtained after applying the evaluation functors $(-)_e : \M \rightarrow \ab$ and $(-)_{\Z/2}: \M \rightarrow \ab$ are exact.

Let $Y = E \wedge X$. With this definition, $E^{\Z/2}_{\star}(X) = \underline{\pi}^{\Z/2}_{\star}(Y)$. We have 
\begin{eqnarray*}
\nabla Y & = & F(E \wedge X, d_{\Z}) \\
& \cong & F(X, F( E, d_{\Z})) \\
& \cong & F(X, \nabla E),
\end{eqnarray*}
so that $(\nabla E)_{\Z/2}^{\star}(X) = \underline{\pi}^{\Z/2}_{-\star}(\nabla Y)$.

Now, by definition of the cohomology theories $(d_I)^{\star}_{\Z/2}$ for the abelian groups $\mathbb{Q}$ and $\mathbb{Q}/\Z$, the long exact sequence associated to the cofiber sequence
$$ \nabla Y \rightarrow F(Y,d_{\mathbb{Q}}) \rightarrow F(Y,d_{\mathbb{Q}/\Z})$$
obtained by applying the functor $\underline{\pi}^{\Z/2}_{-\star}(-)$ is
$$\hdots \rightarrow Hom_{\Z}( \underline{\pi}^{\Z/2}_{\star-1}(Y), \mathbb{Q}/\Z) \rightarrow \underline{\pi}^{\Z/2}_{-\star}(\nabla Y) \rightarrow Hom_{\Z}(\underline{\pi}^{\Z/2}_{\star}(Y), \mathbb{Q}) \rightarrow \hdots.$$
Now, the first and last arrows appear in the complex computing $\mathrm{Ext}^*_{\Z}(\underline{\pi}^{\Z/2}_{\star}(Y), \Z)$ obtained by taking the injective resolution of abelian groups $\mathbb{Q} \rightarrow \mathbb{Q}/\Z$ of $\Z$, so that the kernel of the last arrow is $$\mathrm{Hom}_{\Z}(\underline{\pi}^{\Z/2}_{\star}(Y), \Z) = (E^{\Z/2}_{\star}(X))^{\vee},$$
and the cokernel of the first map is $$\mathrm{Ext}^1_{\Z}(\underline{\pi}^{\Z/2}_{\star-1}(Y), \Z) = \mathrm{Ext}^1_{\Z}(E^{\Z/2}_{\star-1}(X), \Z).$$

This shows exactness after applying the evaluation functor $(-)_{\Z/2}$. Exactness for the sequence appearing after applying the functor $(-)_e$ is analogous, taking $Y=X$ in the previous argument.

It suffices to show that the maps appearing in these two short exact sequences are compatible with the restriction and transfer maps of the three Mackey functors.
Recall that restriction and transfer maps come from morphism of $\Z/2$-spectra $\Z/2_+ \rightarrow S^0$ and $S^0 \rightarrow \Z/2_+$, so the compatibility is just an instance of the naturality of the constructions.
\end{proof}

The following observation has powerful consequences.

\begin{pro} \label{pro:relation:dualities}
The diagrams
$$\xymatrix{ {\freem}^{op} \ar[r]^{\nabla} \ar[d]_{H(-)}  & \freem \ar[d]^{H(-)} \\
\Z/2\sh^{op} \ar[r]_{\nabla} & \Z/2\sh}$$
and
$$\xymatrix{ {\torsm}^{op} \ar[r]^{\nabla_{tors}} \ar[d]_{H(-)}  & \torsm \ar[d]^{H(-)} \\
\Z/2\sh^{op} \ar[r]_{ \Sigma^{-1} \nabla} & \Z/2\sh}$$
commute up to a natural isomorphism.
\end{pro}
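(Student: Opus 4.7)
The plan is to apply the universal coefficients exact sequence of Proposition \ref{pro:univcoeff} with $E = HM$ and $X = S^0$, then identify the integer-graded homotopy Mackey functors of $\nabla HM$ and invoke the $t$-structure of Proposition \ref{pro:functorH}. The whole argument boils down to the fact that $HM$ lies in the heart of that $t$-structure, so that, restricted to integer grading, $\underline{\pi}_{n}(HM) = M$ for $n=0$ and vanishes otherwise; correspondingly, $\nabla HM$ must then have its integer-graded homotopy concentrated in a single degree, and that degree is determined by whichever of $\nabla$ or $\nabla_{tors}$ is non-vanishing on $M$.

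More precisely, specializing Proposition \ref{pro:univcoeff} to $X = S^0$ gives, for each $n \in \Z$, a short exact sequence of Mackey functors
\begin{equation*}
0 \to \nabla_{tors}\bigl(\underline{\pi}_{n-1}(HM)\bigr) \to \underline{\pi}_{-n}(\nabla HM) \to \nabla\bigl(\underline{\pi}_{n}(HM)\bigr) \to 0.
\end{equation*}
For $M \in \freem$, both levels of $M$ are finitely generated free abelian groups, so $\mathrm{Ext}^1_{\Z}(-,\Z)$ vanishes and $\nabla_{tors}M = 0$. The sequence collapses to an isomorphism $\underline{\pi}_{-n}(\nabla HM) \cong \nabla(\underline{\pi}_{n}(HM))$, which equals $\nabla M$ for $n = 0$ and vanishes otherwise. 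The integer-graded homotopy of $\nabla HM$ is thus concentrated in degree $0$ with value $\nabla M$, and Proposition \ref{pro:functorH} produces a canonical equivalence $\nabla HM \simeq H(\nabla M)$.

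For $M \in \torsm$, each level of $M$ is a finitely generated torsion group, so $\mathrm{Hom}_\Z(-, \Z)$ vanishes and $\nabla M = 0$. The sequence degenerates to $\underline{\pi}_{-n}(\nabla HM) \cong \nabla_{tors}(\underline{\pi}_{n-1}(HM))$, which is $\nabla_{tors} M$ for $n = 1$ and vanishes otherwise. Hence the homotopy of $\nabla HM$ is concentrated in degree $-1$, and Proposition \ref{pro:functorH} gives the desired equivalence $\Sigma^{-1} \nabla HM \simeq H(\nabla_{tors} M)$. Naturality of both equivalences follows from naturality of the universal coefficients sequence in $E$ combined with functoriality of $H$ and of Postnikov truncation.

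The only real subtlety is that the identifications must hold at the level of Mackey functors, not merely levelwise abelian groups; this is exactly why I work with the Mackey-functor-valued form of Proposition \ref{pro:univcoeff}, whose compatibility with the restriction and transfer maps is already built in through the naturality of $\nabla$ applied to the stable maps $\Z/2_+ \to S^0$ and $S^0 \to \Z/2_+$. No further computation is required.
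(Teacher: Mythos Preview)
Your argument is correct and follows essentially the same route as the paper: apply the universal coefficients sequence of Proposition~\ref{pro:univcoeff} with $E=HM$ and $X=S^0$, observe that one of the two outer terms vanishes according to whether $M$ is levelwise free or levelwise torsion, and then invoke the characterization of Eilenberg--MacLane spectra from Proposition~\ref{pro:functorH}. Your write-up is in fact somewhat more detailed than the paper's, in particular in making the naturality and the Mackey-functor compatibility explicit.
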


\begin{proof}
It is essentially a consequence of Proposition \ref{pro:univcoeff}. Consider the case when $M \in Fr_{\Z}\M$, then the restriction to integer gradings of the exact sequence provided by Proposition \ref{pro:univcoeff} for $X = S^0$ and $E = HM$ reduces to an isomorphism of $\Z$-graded Mackey functors
$\underline{\pi}_*(\nabla HM) \cong \nabla \underline{\pi}_{-*}(HM) = \nabla M$ concentrated in degree $0$, since $M$ is torsion-free finitely generated, and thus free. We conclude that $\nabla HM$ is an Eilenberg-MacLane spectrum for the Mackey functor $\nabla M$.

For the second diagram, observe that if $M$ is a finitely generated torsion Mackey functor, then for all $o \in \Or$, $(\nabla M)(o) = \mathrm{Hom}_{\Z}(M(o),\Z) = 0$, so that $\nabla M=0$. The end of the proof is analogous, using the universal property of Eilenberg-MacLane $\Z/2$-spectra.
\end{proof}

\begin{rk}
For the second point, the finite generation hypothesis is superfluous.
\end{rk}

\subsection{Operations and duality}

We now study the relationship between duality and module structures over ring $\Z/2$-spectra. 
\begin{rk}
Results in this direction are also used, in the non-equivariant setting, in the proof of the principal result of \cite{HS14}.
\end{rk}

\begin{pro} \label{pro:dualitymod}
Let $R$ be a ring $\Z/2$-spectrum and $X$ be an $R$-module. 
\begin{enumerate}
\item The $\Z/2$-spectrum $\nabla X$ is naturally an $R$-module.
\item Suppose that $X^e_{\star}$ is a free $\Z$-module, then the natural $R^e_{\star}$-module structure on $(\nabla X)^e_{\star}$ is dual to the one on $X^e_{\star}$.
\item Suppose that $R_{\star} \in Tors_{\Z}\M$, then then the natural $R_{\star}$-module structure on $(\nabla X)_{\star}$ is dual to the one on $X_{\star}$.
\end{enumerate}
\end{pro}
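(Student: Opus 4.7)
The first claim is a formal property of the function spectrum. Given $\mu\colon R\wedge X \to X$, I define the $R$-module structure on $\nabla X = F(X, d_{\Z})$ as the adjoint of the composite
$$R \wedge F(X, d_{\Z}) \wedge X \xrightarrow{\sigma} F(X, d_{\Z}) \wedge R \wedge X \xrightarrow{\mathrm{id}\wedge\mu} F(X, d_{\Z}) \wedge X \xrightarrow{\mathrm{ev}} d_{\Z},$$
where $\mathrm{ev}$ is the counit of the adjunction and $\sigma$ the swap. Unitality and associativity follow from the corresponding properties of $\mu$.

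For parts (2) and (3), the plan is to apply the universal coefficient sequence of Proposition \ref{pro:univcoeff} with $E = X$ and test spectrum $S^0$, producing a short exact sequence of $RO(\Z/2)$-graded Mackey functors
$$0 \to \nabla_{tors}(\underline{\pi}_{\star-1}(X)) \to \underline{\pi}_{-\star}(\nabla X) \to \nabla(\underline{\pi}_{\star}(X)) \to 0.$$
The first step is to verify that the two arrows are $\underline{\pi}_\star(R)$-linear; this follows from the naturality of $F(-, d_{\Z})$ (and of $F(-, d_\Q)$, $F(-, d_{\Q/\Z})$) with respect to the $R$-module structure constructed in (1).

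For (2), the freeness of $X^e_\star$ makes $\nabla_{tors}$ vanish on the $e$-component, so the right-hand arrow restricts to an isomorphism $(\nabla X)^e_{-\star} \cong \mathrm{Hom}_{\Z}(X^e_{\star}, \Z)$. The induced $R^e_\star$-action on the right is read off from the pairing $(\nabla X)^e_{-\star} \otimes X^e_{\star} \to \pi^e_0(d_{\Z}) = \Z$ coming from $\mathrm{ev}$, and the construction in (1) yields precisely the dual relation $(r\cdot\phi)(x) = \phi(r\cdot x)$. For (3), the torsion hypothesis on $R_\star$ propagates to $X_\star$ (any element is killed by an integer that annihilates $1 \in R_0$), so $\nabla(X_\star) = 0$ and the sequence degenerates to an isomorphism $(\nabla X)_{-\star} \cong \mathrm{Ext}^1_{\Z}(X_{\star-1}, \Z)$; the $R_\star$-equivariance follows by the same naturality argument applied to the connecting map.

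The main obstacle is tracking the $R$-module structures through the proof of Proposition \ref{pro:univcoeff}. Each term in the defining cofibre sequence $\nabla X \to F(X, d_{\Q}) \to F(X, d_{\Q/\Z})$ inherits an $R$-module structure from the construction in (1), and the maps between them are $R$-linear by functoriality. The remaining point, that the identifications of the outer terms of the SES with $\nabla_{tors}$ and $\nabla$ are $R_\star$-equivariant, reduces to the functoriality of $\mathrm{Hom}_\Z(-,\Z)$ and $\mathrm{Ext}^1_\Z(-,\Z)$ applied to the $R_\star$-module $X_\star$; this is formal but is where some care is required.
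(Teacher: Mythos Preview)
Your proof is correct and follows the same overall strategy as the paper, but the two are organised somewhat differently. For part~(1) the constructions are equivalent: the paper defines the module structure as the adjoint of $\lambda\colon \nabla X \to \nabla(R\wedge X)\cong F(R,\nabla X)$, which is just another packaging of your evaluation--swap composite.

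For~(2), the paper does not pass through the full universal coefficient cofibre sequence as you do. Instead it observes directly that, under the freeness hypothesis, both the $R^e_\star$-action on $(\nabla X)^e_\star$ and the dual action on $(X^e_\star)^{\vee}$ are adjoint to the \emph{same} map $(\nabla X)^e_\star \to \mathrm{Hom}_{\Z}(R^e_\star,(\nabla X)^e_\star)$ induced by $\lambda$ in homotopy; commutativity of the comparison square then follows from the uniqueness of adjoints. Your route---showing that the cofibre sequence $\nabla X \to F(X,d_{\Q}) \to F(X,d_{\Q/\Z})$ is one of $R$-modules and then collapsing---reaches the same conclusion but with a little more to check.

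For~(3), you make explicit the step the paper leaves implicit: that $R_\star$ torsion forces $X_\star$ torsion via the unit of $R$, so the sequence degenerates to the $\mathrm{Ext}^1$ term. The paper, on the other hand, adds a point you do not mention: it notes that a Mackey functor module map $R_\star \boxtimes (\nabla X)_\star \to (\nabla X)_\star$ is completely determined by the pair of level-wise actions at $e$ and $\Z/2$ (compatible with restriction and transfer), so it suffices to verify duality at each level separately. Since you work with the Mackey-valued exact sequence of Proposition~\ref{pro:univcoeff} throughout, this is handled implicitly in your argument, but it is worth being aware that this is where the Mackey structure enters.
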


\begin{proof}
\begin{enumerate}
\item Define the $R$-module structure on $\nabla X$ to be the adjoint of the map $\lambda : \nabla X \rightarrow \nabla( R \wedge X) \cong F(R, \nabla X)$. The verification that this map defines a $R$-module structure is routine.
\item Under the hypothesis of freeness on $X^e_{\star}$, the action of an element $r \in R_d^e$ on $(\nabla X)_{\star}^e$ and $(X_{\star}^e)^{\vee}$ fits into a diagram whose rows are isomorphisms
$$ \xymatrix{ (\nabla X)_{\star}^e \ar[r]^{\cong} \ar[d]_r& (X_{\star}^e)^{\vee} \ar[d]^{r} \\
(\nabla X)_{\star+d}^e \ar[r]_{\cong} & (X_{\star+d})^{\vee} } $$
this diagram is commutative because both actions are adjoint to the map $(\nabla X)_{\star}^e  \rightarrow Hom_{\Z}(R_{\star}^e, (\nabla X)_{\star}^e)$ induced by $\lambda$ in homotopy, under the tensor-hom adjunction.
\item The discussion of the previous point can be repeated for the $R_{\star}^e$-module structure on $(\nabla X)_{\star}^e$, which is the dual of the one on $X_{\star}^e$, and the $R_{\star}^{\Z/2}$-module structure on $(\nabla X)_{\star}^{\Z/2}$, which is the dual of the one on $X_{\star}^{\Z/2}$. The fact that the associated maps of Mackey functors $R_{\star} \boxtimes (\nabla X)_{\star} \rightarrow (\nabla X)_{\star}$ are equal comes from the fact that such a map is precisely determined by a pair of actions of $R_{\star}^e$ and $R_{\star}^{\Z/2}$ on $(\nabla X)_{\star}^e$ and $(\nabla X)_{\star}^{\Z/2}$ compatible with the restriction and transfer homomorphisms (see the discussion of Mackey functor maps $M \boxtimes N \rightarrow P$ in \cite[pp.11-12]{FL04}).
\end{enumerate}
\end{proof}

\subsection{First applications and computations}

The key point of the application described in this section relies on the easy $\Z/2$-equivariant cohomology computation.

\begin{lemma}
The Mackey functor $H\mz^{*}(S^{\alpha})$, obtained as the restriction of the $RO(\Z/2)$-graded Mackey functor $H\mz^{\star}(S^{\alpha})$ to integer grading, is concentrated in degree $1$, where it takes the value $[\Z]$.
\end{lemma}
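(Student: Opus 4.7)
My plan is to exploit the isotropy-separation cofibre sequence $\Z/2_+ \to S^0 \to S^\alpha$ together with the induced long exact sequence of Mackey functors
$$\cdots \to H\mz^{n-1}(\Z/2_+) \to H\mz^n(S^\alpha) \to H\mz^n(S^0) \to H\mz^n(\Z/2_+) \to \cdots.$$
Both $H\mz^*(S^0)$ and $H\mz^*(\Z/2_+)$ turn out to be concentrated in degree zero, which forces $H\mz^n(S^\alpha)$ to vanish for $n\notin\{0,1\}$ and reduces the statement to identifying a single morphism of Mackey functors.

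First I would identify the two simpler terms. Proposition \ref{pro:functorH} gives $H\mz^*(S^0) = \underline{\pi}_{-*}(H\mz)$, which is $\mz$ in degree $0$ and zero elsewhere. For $H\mz^*(\Z/2_+)$, the decomposition of the $\Z/2$-set $\Z/2 \times \Z/2$ into two free orbits under the diagonal action yields $\Z/2_+ \wedge \Z/2_+ \simeq \Z/2_+ \vee \Z/2_+$; hence $H\mz^*(\Z/2_+)$ is also concentrated in degree zero, with $\Z/2$-level equal to $\Z$ and $e$-level equal to $\Z \oplus \Z$.

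It then remains to analyse the map $r : H\mz^0(S^0) \to H\mz^0(\Z/2_+)$ induced by the collapse $\Z/2_+ \to S^0$. At the $\Z/2$-level, $r$ is the restriction of the constant Mackey functor $\mz$, namely the identity of $\Z$. At the $e$-level, after smashing with $\Z/2_+$ the collapse becomes the fold $\Z/2_+ \vee \Z/2_+ \to \Z/2_+$, so $r$ is the diagonal $\Z \to \Z \oplus \Z$ in cohomology. Both maps are injective, so $H\mz^0(S^\alpha) = 0$; their cokernels give $H\mz^1(S^\alpha)_{\Z/2} = 0$ and $H\mz^1(S^\alpha)_e = \Z$. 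Since the $\Z/2$-level vanishes, the restriction and the transfer of $H\mz^1(S^\alpha)$ are automatically zero, so this Mackey functor has the shape $[C]$ of Definition \ref{de:partmack}; the sign action on the $e$-level is then forced by the relation $\theta = \rho\tau - 1 = -1$, which identifies $[C]$ with $[\Z]$.

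The only delicate point is the concrete identification of $r$ at both levels of the Mackey functor, in particular the diagonal at the $e$-level; once this is pinned down, the rest is formal manipulation of the long exact sequence.
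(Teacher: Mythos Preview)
Your argument is correct, but it is not the route the paper takes. The paper computes each level of the Mackey functor directly: it uses the identifications $H\mz_{\Z/2}^*(X) \cong H\Z^*(X/(\Z/2))$ and $H\mz_e^*(X) \cong H\Z^*(X)$, observes that the underlying space of $S^\alpha$ is $S^1$ while the orbit space $S^\alpha/(\Z/2)$ is contractible, and then notes that $[\Z]$ is the unique Mackey functor with the resulting values. Your approach instead runs the long exact sequence of the cofibre $\Z/2_+ \to S^0 \to S^\alpha$ and identifies the connecting map level by level; the paper explicitly mentions this as an alternative in the remark following Corollary~\ref{cor:mackselfdual2}. The paper's method is quicker because it avoids unpacking the Mackey structure on $H\mz^*(\Z/2_+)$ and the map $r$; your method has the advantage of staying entirely inside Mackey-functor-valued cohomology without invoking the Borel/quotient identification, and it makes the appearance of the sign action on $[\Z]_e$ transparent via $\theta = \rho\tau - 1$. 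Both arrive at $[\Z]$ for the same structural reason: once the $\Z/2$-level is seen to vanish, the Mackey functor is determined by its $e$-level.
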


\begin{proof}
Recall that, for all $\Z/2$-space $X$ which admits a cellular decomposition, there is an isomorphism $H\mz_{\Z/2}^*(X) \cong H\Z^*(\frac{X}{\Z/2})$ (see, \cite[Example 3.9]{HHR}), and an isomorphism $H\mz_{e}^*(X) \cong H\Z^*(X)$. Now observe that the underlying space of $S^{\alpha}$ is $S^1$, and the quotient $\frac{S^{\alpha}}{\Z/2}$ is an interval ( and in particular, it is contractible). Thus, the reduced cohomology $H\mz^{*}(S^{\alpha})$ is $[\Z]$ concentrated in degree $1$, and there are isomorphisms $H\mz^{1}_{e}(S^{\alpha}) \cong \Z$ and $H\mz^{1}_{\Z/2}(S^{\alpha}) \cong 0$. There is only one Mackey functor taking these values, which is $[\Z]$.  
\end{proof}

As a consequence, the $\Z/2$-spectrum $\Sigma^{1-\alpha}H\mz$ is an Eilenberg-MacLane $\Z/2$-spectrum for the Mackey functor $[\Z]$: there is a weak equivalence $$\Sigma^{1-\alpha}H\mz \cong H[\Z].$$

\begin{cor} \label{cor:mackselfdual}
There are weak equivalences of $\Z/2$-spectra 
$$H\mz \cong \Sigma^{-2+2\alpha} \nabla H\mz$$ 
and 
$$H\mf \cong \Sigma^{-1+2\alpha} \nabla H\mf.$$
\end{cor}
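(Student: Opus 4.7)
The plan is to combine the just-proved identification $\Sigma^{1-\alpha}H\mz\cong H[\Z]$ with Proposition~\ref{pro:relation:dualities} and the formal property $\nabla\Sigma^V\cong\Sigma^{-V}\nabla$ coming from the definition $\nabla(-)=F(-,d_{\Z})$. First I would compute the Mackey-functor duals of the relevant coefficients. Both $\mz$ and $[\Z]$ lie in $\freem$, so Proposition~\ref{pro:relation:dualities} gives $\nabla H\mz\cong H(\nabla\mz)$ and $\nabla H[\Z]\cong H(\nabla[\Z])$; reading off the dualization (swap $\rho$ and $\tau$ and dualize the underlying abelian groups) directly from Definition~\ref{de:partmack} yields $\nabla\mz=\underline{\Z}^{op}$ and $\nabla[\Z]=[\Z]$.

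Next I would apply $\nabla$ to the base identification $\Sigma^{1-\alpha}H\mz\cong H[\Z]$ and use the formal property above to get
\[
H[\Z]\;\cong\;\nabla H[\Z]\;\cong\;\Sigma^{\alpha-1}\nabla H\mz\;\cong\;\Sigma^{\alpha-1}H\underline{\Z}^{op},
\]
which rearranges to $\Sigma^{1-\alpha}H[\Z]\cong H\underline{\Z}^{op}$. Composing with the base identification a second time produces
\[
\Sigma^{2-2\alpha}H\mz\;\cong\;\Sigma^{1-\alpha}H[\Z]\;\cong\;H\underline{\Z}^{op}\;\cong\;\nabla H\mz,
\]
which is precisely the desired equivalence $H\mz\cong\Sigma^{-2+2\alpha}\nabla H\mz$.

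For the mod-$2$ statement I would first observe that smashing the cofiber sequence $H\mz\xrightarrow{2}H\mz\to H\mf$ with $\Sigma^{1-\alpha}$, and using $[\Z]/2=[\F]$, yields $\Sigma^{1-\alpha}H\mf\cong H[\F]$. The entire argument above then repeats verbatim with $\mz$ replaced by $\mf$ and $[\Z]$ by $[\F]$, except that $\mf$ and $[\F]$ are finitely generated torsion, so the torsion clause of Proposition~\ref{pro:relation:dualities} applies and identifies $\nabla H\mf$ and $\nabla H[\F]$ with single suspensions of $H\mf^{op}$ and $H[\F]$ respectively. The resulting chain of equivalences delivers $H\mf\cong\Sigma^{-1+2\alpha}\nabla H\mf$; the passage from $-2+2\alpha$ to $-1+2\alpha$ is exactly the net effect of the extra suspensions the torsion clause contributes. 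The main obstacle throughout is the careful tracking of these suspension shifts between the $\Sigma^{\pm 1}$ coming from the torsion clause and the $\Sigma^{\pm(1-\alpha)}$ coming from the base identification, but once this bookkeeping is settled the argument is purely formal.
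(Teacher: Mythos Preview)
Your proposal is correct and follows essentially the same route as the paper: both arguments hinge on the identification $\Sigma^{1-\alpha}H\mz\cong H[\Z]$, the self-duality $\nabla[\Z]\cong[\Z]$ (resp.\ $\nabla_{tors}[\F]\cong[\F]$), and the formal rule $\nabla\Sigma^V\cong\Sigma^{-V}\nabla$. Your detour through the explicit computations $\nabla\mz\cong\underline{\Z}^{op}$ and $\nabla_{tors}\mf\cong\mf^{op}$ is correct but unnecessary---the paper's chain $H\mz\cong\Sigma^{\alpha-1}H[\Z]\cong\Sigma^{\alpha-1}\nabla H[\Z]\cong\Sigma^{\alpha-1}\nabla\Sigma^{1-\alpha}H\mz\cong\Sigma^{2\alpha-2}\nabla H\mz$ reaches the conclusion without ever identifying $\nabla H\mz$ as an Eilenberg--MacLane spectrum for a named Mackey functor.
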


\begin{proof}
We start by showing the results for the spectrum $H\mz$.
Now, $[\Z]$ is a $\Z$-free Mackey functor satisfying $ \nabla [\Z] \cong [\Z]$, thus, by Proposition \ref{pro:relation:dualities},  $\nabla H[\Z] = H[\Z]$.
We get a chain of weak equivalences
\begin{eqnarray*}
H\mz & \cong & \Sigma^{\alpha-1} H[\Z] \\
& \cong & \Sigma^{\alpha-1} \nabla H[\Z] \\
& \cong & \Sigma^{\alpha-1} \nabla \Sigma^{1-\alpha}H\mz \\
& \cong & \Sigma^{2\alpha-2} \nabla H\mz
\end{eqnarray*}

The cofibre sequence $$H\mz \stackrel{2}{\rightarrow} H\mz \rightarrow H\mf$$ provided by Proposition \ref{pro:functorH} gives $H\mf^{*}(S^{\alpha})= [\F]$, which is a self-dual $\Z$-torsion Mackey functor. The result follows as before, using this time the second diagram of Proposition \ref{pro:relation:dualities}.
\end{proof}

\begin{cor} \label{cor:mackselfdual2}
There is a short exact sequence of Mackey functors 
$$0 \rightarrow \nabla_{tors} (\tors H\mz_{\star-1}) \rightarrow H\mz_{-\star-2+2 \alpha} \rightarrow \nabla( \free H\mz_{\star}) \rightarrow 0.$$
and an isomorphism of Mackey functors $$H\mf_{\star} \cong H\mf_{-\star+2-2\alpha}.$$
\end{cor}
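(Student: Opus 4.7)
The plan is to specialise the universal coefficient exact sequence of Proposition~\ref{pro:univcoeff} to $X = S^0$ with $E = H\mz$ and then with $E = H\mf$, and to rewrite the middle term $(\nabla E)^\star = \underline{\pi}_{-\star}(\nabla E)$ by means of the self-duality isomorphisms of Corollary~\ref{cor:mackselfdual}. Both statements will then drop out of this translation.

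\textbf{First statement.} With $X = S^0$ and $E = H\mz$, Proposition~\ref{pro:univcoeff} furnishes
\[
0 \to \nabla_{tors}(H\mz_{\star-1}) \to (\nabla H\mz)^\star \to \nabla(H\mz_\star) \to 0.
\]
Since $\nabla H\mz \cong \Sigma^{2-2\alpha} H\mz$ by Corollary~\ref{cor:mackselfdual}, we compute $(\nabla H\mz)^\star = \underline{\pi}_{-\star}(\nabla H\mz) \cong H\mz_{-\star-2+2\alpha}$. Finally, $\mathrm{Hom}_\Z(-,\Z)$ vanishes on torsion abelian groups and $\mathrm{Ext}^1_\Z(-,\Z)$ vanishes on free abelian groups, so the canonical short exact sequence $0 \to \tors \to \mathrm{id} \to \free \to 0$ of functors $\M^f \to \M^f$ lets us replace $H\mz_{\star-1}$ by $\tors H\mz_{\star-1}$ and $H\mz_\star$ by $\free H\mz_\star$ in the outer terms without changing them, producing the advertised exact sequence.

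\textbf{Second statement.} Running the same argument with $E = H\mf$ is sharper: every $H\mf_\beta$ is level-wise an $\F$-vector space, in particular $\Z$-torsion, so $\nabla(H\mf_\star) = 0$ and the universal coefficient sequence collapses. Using the second half of Corollary~\ref{cor:mackselfdual}, $\nabla H\mf \cong \Sigma^{1-2\alpha}H\mf$, the sequence reduces to
\[
\nabla_{tors}(H\mf_{\star-1}) \cong H\mf_{-\star-1+2\alpha}.
\]
After reindexing $\star \mapsto \star+1$ and invoking that $\nabla_{tors}$ is an involution on the subcategory of finitely generated torsion Mackey functors valued in $\F$-vector spaces, this $\nabla_{tors}$-duality can be transported into a direct identification, yielding the announced isomorphism $H\mf_\star \cong H\mf_{-\star+2-2\alpha}$ of bigraded Mackey functors.

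\textbf{Main obstacle.} The principal technical point is purely bookkeeping: tracking the $RO(\Z/2)$-graded suspensions through the contravariant functor $\nabla$, which sends $\Sigma^V$ to $\Sigma^{-V}$, together with the sign convention $(-)^\star = \underline{\pi}_{-\star}$ of equivariant cohomology. Once those shifts are handled, both statements follow immediately from Proposition~\ref{pro:univcoeff} combined with Corollary~\ref{cor:mackselfdual}, with no further computation needed beyond the equivariant homology of a point.
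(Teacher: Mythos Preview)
Your argument for the first statement is correct and coincides with the paper's: both specialise Proposition~\ref{pro:univcoeff} to $X=S^0$, identify the middle term via Corollary~\ref{cor:mackselfdual}, and simplify the outer terms using that $(-)^\vee$ kills torsion while $\mathrm{Ext}^1_\Z(-,\Z)$ kills free abelian groups. The paper phrases this through $H[\Z]\cong\Sigma^{1-\alpha}H\mz$, you through $H\mz$ directly; after the obvious reindexing the two are identical.

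For the second statement there is a genuine gap in your last step. The ``analogous'' argument (which is all the paper writes) gives
\[
\nabla_{tors}(H\mf_{\star})\ \cong\ H\mf_{-\star-2+2\alpha},
\]
and you correctly reach this point. But the claim that ``$\nabla_{tors}$ is an involution'' lets you \emph{drop} $\nabla_{tors}$ is not valid: $\nabla_{tors}\circ\nabla_{tors}\cong\mathrm{id}$ does not imply $\nabla_{tors}\cong\mathrm{id}$. Applying $\nabla_{tors}$ to both sides only returns the tautology $H\mf_\star\cong H\mf_\star$. Concretely, $\nabla_{tors}(\mf)$ has restriction $\mathrm{Ext}^1(\tau_{\mf},\Z)=\mathrm{Ext}^1(0,\Z)=0$ and transfer $\mathrm{Ext}^1(\rho_{\mf},\Z)=\mathrm{Ext}^1(\mathrm{id},\Z)=\mathrm{id}$, so $\nabla_{tors}(\mf)\cong L(\F)\not\cong\mf$. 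Hence $\nabla_{tors}$ is a nontrivial involution on $\F$-valued Mackey functors, and your ``transport'' step does not go through. What the universal coefficient argument actually proves is the $\nabla_{tors}$-twisted symmetry displayed above; the untwisted isomorphism $H\mf_\star\cong H\mf_{-\star+2-2\alpha}$ asserted in the corollary does not follow from the involution property alone, and your proposal does not supply the missing ingredient.
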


\begin{proof}

For the first point, the short exact sequence of Mackey functors of Proposition \ref{pro:univcoeff} gives an exact sequence
$$0 \rightarrow \nabla_{tors} (\tors H[\Z]_{\star-1}) \rightarrow (\nabla H[\Z])_{-\star}(X) \rightarrow \nabla( \free H[\Z]_{\star}) \rightarrow 0,$$
and the result follows.

The results about $H\mf$ are analogous. 
\end{proof}

\begin{rk}
\begin{itemize}
\item This is an explanation of the symmetry of the computations of coefficients rings made by Ferland and Lewis in \cite[figure 9.1, 9.2, 9.3, 9.4]{FL04}. 
\item We could also have computed the cohomology of $S^{\alpha}$ with the cofibre sequence $\Z/2_+ \rightarrow S^0 \rightarrow S^{\alpha}$ as in \cite{FL04}.
\end{itemize}
\end{rk}

\begin{ex}
As an application, we compute $H\mz_{\star}$. Let $Pos_{\star}$ denotes the sub-Mackey functor of $H\mz_{\star}$ consisting in elements of degree of the form $\Z + \mathbb{N} \alpha$. To understand $H\mz_{\star}$, it is sufficient to compute $Pos_{\star}$ by duality.

First, we know how to compute integral $H\mz$-cohomology of $\Z/2$-spaces, because $H\mz_{\Z/2}^*(X) = H\Z^*(\frac{X}{\Z/2})$ and  $H\mz_{e}^*(X) = H\Z^*(X)$.

In particular, there is an isomorphism of groups, $Pos_{\star}^{e} \cong \Z[\sigma]$, with $\sigma \in Pos_{-1+\alpha}$, and we know the action of $\Z/2$ on it: $\Z/2$ acts by $-1$ on odd powers of $\sigma$. We now compute $Pos_{\star}^{\Z/2}$. Let $k \geq 0$, and consider the cofiber sequence of $\Z/2$-spaces
$$S(k \alpha)_+ \rightarrow S^0 \rightarrow S^{k \alpha}.$$
As an abelian group,
$$H\Z^*(\frac{S(k\alpha)_+}{\Z/2}) =H\Z^*(\R P^{k-1}_+) = \left\{ \begin{matrix} \Z[x]/(2x) & \text{if $k$ even} \\ \Z[x]/(2x) \oplus \Sigma^k \Z & \text{if $k$ odd} \end{matrix} \right.$$
and the map induced by
$S(k \alpha)_+ \rightarrow S^0 $ is the unique injective map in cohomology. We have determined $H\mz^*(S^{k \alpha}) \cong H\Z^{*+1}(\R P^{k-1})$.
Now, $Pos_{\star}$ satisfies Lemma \ref{lemma_mackey_a}, which allows us to recover both the $\Z[a]/(2a)$-module structure on $Pos_{\star}^{\Z/2}$ and the Mackey functor structure of $Pos_{\star}$ (see the "positive part" of figure \ref{fig:coeffz} for a graphical representation of the result).

One concludes the computation of the Mackey functor $H\mz_{\star}$ using Corollary \ref{cor:mackselfdual2}, and $H\mf_{\star}$ using either an analogous proof, or the cofibre exact sequence $$H\mz \stackrel{2}{\rightarrow} H\mz \rightarrow H\mf.$$
A graphical representation of these Mackey functors are represented in figure \ref{fig:coeffz} and figure \ref{fig:coefff} respectively, with the conventions:

\begin{nota} \label{nota:figures}
For simplicity, we will use the following shorthand for some Mackey functors appearing in Definition \ref{de:partmack}.
The symbol $ \bullet$ stands for the Mackey functor $<\F>$, $L$ stands for $L(\F)$, and $L_-$ the Mackey functor $[\F]$. The $\Z[\Z/2]$-module $\Z_-$ is the free abelian group of rank one with the sign action.
A vertical line represents the product with the Euler class $a$. This product induces one of the following Mackey functor maps:
\begin{itemize}
\item the identity of $ \bullet$, 
\item the unique non-trivial morphism $ L \rightarrow \bullet$,
\item the unique non-trivial morphism $ \bullet \inj \mf$.
\end{itemize}
\end{nota}

\begin{figure}[h] 

\begin{center}
\definecolor{qqqqff}{rgb}{0.33,0.33,0.33}

\begin{tikzpicture}[line cap=round,line join=round,>=triangle 45,x=0.6cm,y=0.6cm]
\draw[->,color=black] (-9,0) -- (9,0);
\foreach \x in {-8,-6,-4,-2,2,4,6,8}
\draw[shift={(\x,0)},color=black] (0pt,2pt) -- (0pt,-2pt) node[below] {\footnotesize $\x$};
\draw[->,color=black] (0,-10) -- (0,10);
\foreach \y in {-8,-6,-4,-2,2,4,6,8}
\draw[shift={(0,\y)},color=black] (2pt,0pt) -- (-2pt,0pt) node[left] {\footnotesize $\y$};
\draw[color=black] (0pt,-10pt) node[right] {\footnotesize $0$};
\clip(-9,-9) rectangle (9,9);
\draw (-59.2,24.92) node[anchor=north west] {$\underline{ \mathbb{F}}_2$};
\draw (-58.23,23.91) node[anchor=north west] {$\underline{ \mathbb{F}}_2$};
\draw (-57.22,22.94) node[anchor=north west] {$\underline{ \mathbb{F}}_2$};
\draw (-56.21,21.93) node[anchor=north west] {$\underline{ \mathbb{F}}_2$};
\draw (-55.24,20.92) node[anchor=north west] {$\underline{ \mathbb{F}}_2$};
\draw (-54.23,19.9) node[anchor=north west] {$\underline{ \mathbb{F}}_2$};
\draw (-53.27,18.94) node[anchor=north west] {$\underline{ \mathbb{F}}_2$};
\draw (-52.25,17.93) node[anchor=north west] {$\underline{ \mathbb{F}}_2$};
\draw (-51.29,16.96) node[anchor=north west] {$\underline{ \mathbb{F}}_2$};
\draw (-50.28,15.95) node[anchor=north west] {$\underline{ \mathbb{F}}_2$};
\draw (-49.26,14.99) node[anchor=north west] {$\underline{ \mathbb{F}}_2$};
\draw (-71.15,36.87) node[anchor=north west] {$\underline{ \mathbb{F}}_2$};
\draw (-70.14,35.86) node[anchor=north west] {$\underline{ \mathbb{F}}_2$};
\draw (-69.17,34.95) node[anchor=north west] {$\underline{ \mathbb{F}}_2$};
\draw (-68.21,33.93) node[anchor=north west] {$\underline{ \mathbb{F}}_2$};
\draw (-67.2,32.92) node[anchor=north west] {$\underline{ \mathbb{F}}_2$};
\draw (-66.19,31.91) node[anchor=north west] {$\underline{ \mathbb{F}}_2$};
\draw (-65.22,30.94) node[anchor=north west] {$\underline{ \mathbb{F}}_2$};
\draw (-64.21,29.93) node[anchor=north west] {$\underline{ \mathbb{F}}_2$};
\draw (-63.25,28.92) node[anchor=north west] {$\underline{ \mathbb{F}}_2$};
\draw (-62.23,27.95) node[anchor=north west] {$\underline{ \mathbb{F}}_2$};
\draw (-61.22,26.89) node[anchor=north west] {$\underline{ \mathbb{F}}_2$};
\draw (-0.28,0.76) node[anchor=north west] {$\mz$};
\draw (0.68,-0.25) node[anchor=north west] {$R(\Z_-)$};
\draw (1.69,-1.21) node[anchor=north west] {$\mz$};
\draw (2.71,-2.22) node[anchor=north west] {$R(\Z_-)$};
\draw (3.67,-3.24) node[anchor=north west] {$\mz$};
\draw (4.68,-4.2) node[anchor=north west] {$R(\Z_-)$};
\draw (5.65,-5.21) node[anchor=north west] {$\mz$};
\draw (6.66,-6.23) node[anchor=north west] {$R(\Z_-)$};
\draw (7.62,-7.19) node[anchor=north west] {$\mz$};
\draw (8.64,-8.15) node[anchor=north west] {$R(\Z_-)$};
\draw (9.65,-9.17) node[anchor=north west] {$\mz$};
\draw (-12.24,12.77) node[anchor=north west] {$\mz^{op}$};
\draw (-11.23,11.76) node[anchor=north west] {$L(\Z_-)$};
\draw (-10.26,10.79) node[anchor=north west] {$\mz^{op}$};
\draw (-9.3,9.78) node[anchor=north west] {$L(\Z_-)$};
\draw (-8.29,8.82) node[anchor=north west] {$\mz^{op}$};
\draw (-7.27,7.8) node[anchor=north west] {$L(\Z_-)$};
\draw (-6.31,6.79) node[anchor=north west] {$\mz^{op}$};
\draw (-5.34,5.78) node[anchor=north west] {$L(\Z_-)$};
\draw (-4.33,4.81) node[anchor=north west] {$\mz^{op}$};
\draw (-3.32,3.8) node[anchor=north west] {$L(\Z_-)$};
\draw (-1.3,1.78) node[anchor=north west] {$R(\Z_-)$};
\draw (0,0) -- (0,-10);
\draw (2,-2) -- (2,-10);
\draw (4,-4) -- (4,-10);
\draw (6,-6) -- (6,-10);
\draw (8,-8) -- (8,-10);
\draw (10,-10) -- (10,-10);
\draw (-3,3) -- (-3,10);
\draw (-5,5) -- (-5,10);
\draw (-7,7) -- (-7,10);
\draw (-9,9) -- (-9,10);
\draw (-11,11) -- (-11,10);
\draw (8.0,0.86) node[anchor=north west] {$1$};
\draw (0.1,8.95) node[anchor=north west] {$\alpha$};
\draw (-2.31,2.79) node[anchor=north west] {$L$};
\begin{scriptsize}
\fill [color=qqqqff] (0,-1) circle (1.5pt);
\fill [color=qqqqff] (0,-2) circle (1.5pt);
\fill [color=qqqqff] (0,-3) circle (1.5pt);
\fill [color=qqqqff] (0,-4) circle (1.5pt);
\fill [color=qqqqff] (0,-5) circle (1.5pt);
\fill [color=qqqqff] (0,-6) circle (1.5pt);
\fill [color=qqqqff] (0,-7) circle (1.5pt);
\fill [color=qqqqff] (0,-8) circle (1.5pt);
\fill [color=qqqqff] (0,-10) circle (1.5pt);
\fill [color=qqqqff] (0,-10) circle (1.5pt);
\fill [color=qqqqff] (0,-11) circle (1.5pt);
\fill [color=qqqqff] (2,-3) circle (1.5pt);
\fill [color=qqqqff] (2,-4) circle (1.5pt);
\fill [color=qqqqff] (2,-5) circle (1.5pt);
\fill [color=qqqqff] (2,-6) circle (1.5pt);
\fill [color=qqqqff] (2,-7) circle (1.5pt);
\fill [color=qqqqff] (2,-8) circle (1.5pt);
\fill [color=qqqqff] (2,-9) circle (1.5pt);
\fill [color=qqqqff] (2,-10) circle (1.5pt);
\fill [color=qqqqff] (2,-11) circle (1.5pt);
\fill [color=qqqqff] (4,-5) circle (1.5pt);
\fill [color=qqqqff] (4,-6) circle (1.5pt);
\fill [color=qqqqff] (4,-7) circle (1.5pt);
\fill [color=qqqqff] (4,-8) circle (1.5pt);
\fill [color=qqqqff] (4,-9) circle (1.5pt);
\fill [color=qqqqff] (4,-10) circle (1.5pt);
\fill [color=qqqqff] (4,-11) circle (1.5pt);
\fill [color=qqqqff] (6,-7) circle (1.5pt);
\fill [color=qqqqff] (6,-8) circle (1.5pt);
\fill [color=qqqqff] (6,-9) circle (1.5pt);
\fill [color=qqqqff] (6,-10) circle (1.5pt);
\fill [color=qqqqff] (6,-11) circle (1.5pt);
\fill [color=qqqqff] (8,-9) circle (1.5pt);
\fill [color=qqqqff] (8,-10) circle (1.5pt);
\fill [color=qqqqff] (8,-11) circle (1.5pt);
\fill [color=qqqqff] (10,-11) circle (1.5pt);
\fill [color=qqqqff] (-3,11) circle (1.5pt);
\fill [color=qqqqff] (-3,10) circle (1.5pt);
\fill [color=qqqqff] (-3,9) circle (1.5pt);
\fill [color=qqqqff] (-3,8) circle (1.5pt);
\fill [color=qqqqff] (-3,7) circle (1.5pt);
\fill [color=qqqqff] (-3,6) circle (1.5pt);
\fill [color=qqqqff] (-3,5) circle (1.5pt);
\fill [color=qqqqff] (-3,4) circle (1.5pt);
\fill [color=qqqqff] (-5,11) circle (1.5pt);
\fill [color=qqqqff] (-5,10) circle (1.5pt);
\fill [color=qqqqff] (-5,9) circle (1.5pt);
\fill [color=qqqqff] (-5,8) circle (1.5pt);
\fill [color=qqqqff] (-5,7) circle (1.5pt);
\fill [color=qqqqff] (-5,6) circle (1.5pt);
\fill [color=qqqqff] (-7,11) circle (1.5pt);
\fill [color=qqqqff] (-7,10) circle (1.5pt);
\fill [color=qqqqff] (-7,9) circle (1.5pt);
\fill [color=qqqqff] (-7,8) circle (1.5pt);
\fill [color=qqqqff] (-9,11) circle (1.5pt);
\fill [color=qqqqff] (-9,10) circle (1.5pt);
\end{scriptsize}
\end{tikzpicture}

\end{center}
\caption{The Mackey functor $ H \mz_{ \star}$. Vertical lines represent the product by the Euler class $a$, and dots represent copies of the Mackey functor $<\F>$.}  \label{fig:coeffz}
\end{figure}
\end{ex}

\begin{figure}
 \begin{center}
\definecolor{cqcqcq}{rgb}{0.75,0.75,0.75}
\definecolor{qqqqff}{rgb}{0.33,0.33,0.33}


\begin{tikzpicture}[line cap=round,line join=round,>=triangle 45,x=0.5cm,y=0.5cm]
\draw[->,color=black] (-10,0) -- (10,0);
\foreach \x in {-10,-8,-6,-4,-2,2,4,6,8}
\draw[shift={(\x,0)},color=black] (0pt,2pt) -- (0pt,-2pt) node[below] {\footnotesize $\x$};
\draw[->,color=black] (0,-10) -- (0,10);
\foreach \y in {-10,-8,-6,-4,-2,2,4,6,8}
\draw[shift={(0,\y)},color=black] (2pt,0pt) -- (-2pt,0pt) node[left] {\footnotesize $\y$};
\draw[color=black] (0pt,-10pt) node[right] {\footnotesize $0$};
\clip(-10,-10) rectangle (10,10);
\draw (-0.31,0.5) node[anchor=north west] {$\underline{ \mathbb{F}}$};
\draw (0.7,-0.51) node[anchor=north west] {$\underline{ \mathbb{F}}$};
\draw (1.7,-1.52) node[anchor=north west] {$\underline{ \mathbb{F}}$};
\draw (2.71,-2.52) node[anchor=north west] {$\underline{ \mathbb{F}}$};
\draw (3.67,-3.51) node[anchor=north west] {$\underline{ \mathbb{F}}$};
\draw (4.65,-4.51) node[anchor=north west] {$\underline{ \mathbb{F}}$};
\draw (5.66,-5.49) node[anchor=north west] {$\underline{ \mathbb{F}}$};
\draw (6.66,-6.5) node[anchor=north west] {$\underline{ \mathbb{F}}$};
\draw (7.65,-7.46) node[anchor=north west] {$\underline{ \mathbb{F}}$};
\draw (8.63,-8.47) node[anchor=north west] {$\underline{ \mathbb{F}}$};
\draw (9.64,-9.45) node[anchor=north west] {$\underline{ \mathbb{F}}$};
\draw (-1.81,1.5) node[anchor=north west] {$L_-$};
\draw (-12.25,12.46) node[anchor=north west] {$L$};
\draw (-11.24,11.45) node[anchor=north west] {$L$};
\draw (-10.28,10.49) node[anchor=north west] {$L$};
\draw (-9.3,9.51) node[anchor=north west] {$L$};
\draw (-8.29,8.5) node[anchor=north west] {$L$};
\draw (-7.29,7.5) node[anchor=north west] {$L$};
\draw (-6.3,6.51) node[anchor=north west] {$L$};
\draw (-5.32,5.51) node[anchor=north west] {$L$};
\draw (-4.31,4.53) node[anchor=north west] {$L$};
\draw (-3.31,3.52) node[anchor=north west] {$L$};
\draw (-2.4,2.49) node[anchor=north west] {$L$};
\draw (0,0) -- (0,-10);
\draw (1,-1) -- (1,-10);
\draw (2,-2) -- (2,-10);
\draw (3,-3) -- (3,-10);
\draw (4,-4) -- (4,-10);
\draw (5,-5) -- (5,-10);
\draw (6,-6) -- (6,-10);
\draw (7,-7) -- (7,-10);
\draw (8,-8) -- (8,-10);
\draw (9,-9) -- (9,-10);
\draw (10,-10) -- (10,-10);
\draw (-2,2) -- (-2,10);
\draw (-3,3) -- (-3,10);
\draw (-4,4) -- (-4,10);
\draw (-5,5) -- (-5,10);
\draw (-6,6) -- (-6,10);
\draw (-7,7) -- (-7,10);
\draw (-8,8) -- (-8,10);
\draw (-9,9) -- (-9,10);
\draw (-10,10) -- (-10,10);
\draw (-11,11) -- (-11,10);
\draw (9.4,0.9) node[anchor=north west] {$1$};
\draw (0.1,10) node[anchor=north west] {$\alpha$};
\begin{scriptsize}
\fill [color=qqqqff] (1,-2) circle (1.5pt);
\fill [color=qqqqff] (1,-3) circle (1.5pt);
\fill [color=qqqqff] (1,-4) circle (1.5pt);
\fill [color=qqqqff] (1,-5) circle (1.5pt);
\fill [color=qqqqff] (1,-6) circle (1.5pt);
\fill [color=qqqqff] (1,-7) circle (1.5pt);
\fill [color=qqqqff] (1,-8) circle (1.5pt);
\fill [color=qqqqff] (1,-9) circle (1.5pt);
\fill [color=qqqqff] (1,-10) circle (1.5pt);
\fill [color=qqqqff] (1,-11) circle (1.5pt);
\fill [color=qqqqff] (0,-1) circle (1.5pt);
\fill [color=qqqqff] (0,-2) circle (1.5pt);
\fill [color=qqqqff] (0,-3) circle (1.5pt);
\fill [color=qqqqff] (0,-4) circle (1.5pt);
\fill [color=qqqqff] (0,-5) circle (1.5pt);
\fill [color=qqqqff] (0,-6) circle (1.5pt);
\fill [color=qqqqff] (0,-7) circle (1.5pt);
\fill [color=qqqqff] (0,-8) circle (1.5pt);
\fill [color=qqqqff] (0,-9) circle (1.5pt);
\fill [color=qqqqff] (0,-10) circle (1.5pt);
\fill [color=qqqqff] (0,-11) circle (1.5pt);
\fill [color=qqqqff] (2,-3) circle (1.5pt);
\fill [color=qqqqff] (2,-4) circle (1.5pt);
\fill [color=qqqqff] (2,-5) circle (1.5pt);
\fill [color=qqqqff] (2,-6) circle (1.5pt);
\fill [color=qqqqff] (2,-7) circle (1.5pt);
\fill [color=qqqqff] (2,-8) circle (1.5pt);
\fill [color=qqqqff] (2,-9) circle (1.5pt);
\fill [color=qqqqff] (2,-10) circle (1.5pt);
\fill [color=qqqqff] (2,-11) circle (1.5pt);
\fill [color=qqqqff] (3,-4) circle (1.5pt);
\fill [color=qqqqff] (3,-5) circle (1.5pt);
\fill [color=qqqqff] (3,-6) circle (1.5pt);
\fill [color=qqqqff] (3,-7) circle (1.5pt);
\fill [color=qqqqff] (3,-8) circle (1.5pt);
\fill [color=qqqqff] (3,-9) circle (1.5pt);
\fill [color=qqqqff] (3,-10) circle (1.5pt);
\fill [color=qqqqff] (3,-11) circle (1.5pt);
\fill [color=qqqqff] (4,-5) circle (1.5pt);
\fill [color=qqqqff] (4,-6) circle (1.5pt);
\fill [color=qqqqff] (4,-7) circle (1.5pt);
\fill [color=qqqqff] (4,-8) circle (1.5pt);
\fill [color=qqqqff] (4,-9) circle (1.5pt);
\fill [color=qqqqff] (4,-10) circle (1.5pt);
\fill [color=qqqqff] (4,-11) circle (1.5pt);
\fill [color=qqqqff] (5,-6) circle (1.5pt);
\fill [color=qqqqff] (5,-7) circle (1.5pt);
\fill [color=qqqqff] (5,-8) circle (1.5pt);
\fill [color=qqqqff] (5,-9) circle (1.5pt);
\fill [color=qqqqff] (5,-10) circle (1.5pt);
\fill [color=qqqqff] (5,-11) circle (1.5pt);
\fill [color=qqqqff] (6,-7) circle (1.5pt);
\fill [color=qqqqff] (6,-8) circle (1.5pt);
\fill [color=qqqqff] (6,-9) circle (1.5pt);
\fill [color=qqqqff] (6,-10) circle (1.5pt);
\fill [color=qqqqff] (6,-11) circle (1.5pt);
\fill [color=qqqqff] (7,-8) circle (1.5pt);
\fill [color=qqqqff] (7,-9) circle (1.5pt);
\fill [color=qqqqff] (7,-10) circle (1.5pt);
\fill [color=qqqqff] (7,-11) circle (1.5pt);
\fill [color=qqqqff] (8,-9) circle (1.5pt);
\fill [color=qqqqff] (8,-10) circle (1.5pt);
\fill [color=qqqqff] (8,-11) circle (1.5pt);
\fill [color=qqqqff] (9,-10) circle (1.5pt);
\fill [color=qqqqff] (9,-11) circle (1.5pt);
\fill [color=qqqqff] (10,-11) circle (1.5pt);
\fill [color=qqqqff] (-2,11) circle (1.5pt);
\fill [color=qqqqff] (-2,10) circle (1.5pt);
\fill [color=qqqqff] (-2,9) circle (1.5pt);
\fill [color=qqqqff] (-2,8) circle (1.5pt);
\fill [color=qqqqff] (-2,7) circle (1.5pt);
\fill [color=qqqqff] (-2,6) circle (1.5pt);
\fill [color=qqqqff] (-2,5) circle (1.5pt);
\fill [color=qqqqff] (-2,4) circle (1.5pt);
\fill [color=qqqqff] (-2,3) circle (1.5pt);
\fill [color=qqqqff] (-3,11) circle (1.5pt);
\fill [color=qqqqff] (-3,10) circle (1.5pt);
\fill [color=qqqqff] (-3,9) circle (1.5pt);
\fill [color=qqqqff] (-3,8) circle (1.5pt);
\fill [color=qqqqff] (-3,7) circle (1.5pt);
\fill [color=qqqqff] (-3,6) circle (1.5pt);
\fill [color=qqqqff] (-3,5) circle (1.5pt);
\fill [color=qqqqff] (-3,4) circle (1.5pt);
\fill [color=qqqqff] (-4,11) circle (1.5pt);
\fill [color=qqqqff] (-4,10) circle (1.5pt);
\fill [color=qqqqff] (-4,9) circle (1.5pt);
\fill [color=qqqqff] (-4,8) circle (1.5pt);
\fill [color=qqqqff] (-4,7) circle (1.5pt);
\fill [color=qqqqff] (-4,6) circle (1.5pt);
\fill [color=qqqqff] (-4,5) circle (1.5pt);
\fill [color=qqqqff] (-5,11) circle (1.5pt);
\fill [color=qqqqff] (-5,10) circle (1.5pt);
\fill [color=qqqqff] (-5,9) circle (1.5pt);
\fill [color=qqqqff] (-5,8) circle (1.5pt);
\fill [color=qqqqff] (-5,7) circle (1.5pt);
\fill [color=qqqqff] (-5,6) circle (1.5pt);
\fill [color=qqqqff] (-6,11) circle (1.5pt);
\fill [color=qqqqff] (-6,10) circle (1.5pt);
\fill [color=qqqqff] (-6,9) circle (1.5pt);
\fill [color=qqqqff] (-6,8) circle (1.5pt);
\fill [color=qqqqff] (-6,7) circle (1.5pt);
\fill [color=qqqqff] (-7,11) circle (1.5pt);
\fill [color=qqqqff] (-7,10) circle (1.5pt);
\fill [color=qqqqff] (-7,9) circle (1.5pt);
\fill [color=qqqqff] (-7,8) circle (1.5pt);
\fill [color=qqqqff] (-8,11) circle (1.5pt);
\fill [color=qqqqff] (-8,10) circle (1.5pt);
\fill [color=qqqqff] (-8,9) circle (1.5pt);
\fill [color=qqqqff] (-9,11) circle (1.5pt);
\fill [color=qqqqff] (-9,10) circle (1.5pt);
\fill [color=qqqqff] (-10,11) circle (1.5pt);
\end{scriptsize}
\end{tikzpicture}

\end{center}
\caption{The Mackey functor $ H \mf_{ \star}$. Vertical lines represent the product by the Euler class $a$, and dots represent copies of the Mackey functor $<\F>$.}  \label{fig:coefff}

\end{figure}

\begin{nota} \label{nota:sigma}
We call $\sigma^{-1}$ the non trivial element in degree $(1-\alpha)$, so that $(H\mf)_{\Z/2}$ contains $\F[a, \sigma^{-1}]$ as a subalgebra.
\end{nota}
 
\begin{rk}
The class $ \sigma$ is related to the orientation of the tautological fiber bundle on $B\Z/2$ with respect to the modulo $2$ cohomology, that is why powers of $\sigma$ can be seen as orientation classes. This appears in Hu and Kriz computation of $H \mf_{\star}$ in \cite{HK01}.
\end{rk}

\subsection{The slice filtration and the slice spectral sequence}

We now recall the definition of the slice filtration and the slice spectral sequence from \cite{HHR}. However, since our focus is on the group $\Z/2$, the exposition is easier.

\begin{de}
Let $k \in \Z$.
\begin{itemize}
\item The {\it~regular slice sphere} of dimension $2k$ is the $\Z/2$-spectrum $S^{k(1+\alpha)}$.
\item The {\it~induced slice sphere} of dimension $k$ is the $\Z/2$-spectrum $\Z/2_+ \wedge S^k$.
\item The {\it~shifted slice sphere} of dimension $2k-1$ is the $\Z/2$-spectrum $S^{k(1+\alpha)-1}$.
\end{itemize}
The set of slice spheres is the union of the regular, induced and shifted ones.
\end{de}

The construction of the slice tower is analogous to the construction of the Postnikov tower, replacing the role of the $\Z/2$-equivariant $n$ dimensional spheres $S^n$ and $\Z/2_+\wedge S^n$ by the slices spheres of dimension $n$.

\begin{de}
A $\Z/2$-spectrum $X$ is {\it~slice $n$-null} if for every slice sphere $\hat{S}$ of dimension $\geq n$, the $\Z/2$-space $F(\hat{S}, X)$ is contractible. In this case, we note $X<n$ or $X \geq n-1$. A $\Z/2$-spectrum $Y$ is {\it~slice $n$-connective} if for all slice $n$-null spectrum $X$, the $\Z/2$-space $F(Y,X)$ is contractible. We note $Y \geq n+1$ or $Y > n$.
\end{de}

By definition of slice null and slice connective, a $\Z/2$-spectrum $X$ is slice $n$-null if and only if for all $k \in \Z$, $\Sigma^{k(1+\alpha)}X <n+2k$. Similarly, a $\Z/2$-spectrum $Y$ is slice $n$-connective if and only if for all $k \in \Z$, $\Sigma^{k(1+\alpha)}X > n+2k$.

\begin{lemma}
Let $X$ be a $\Z/2$-spectrum and $n \in \Z$. Then $X<n$ if and only if for all $k \geq [n/2]$, $$\underline{\pi}_{k(1+ \alpha)} = 0 = \underline{\pi}_{k(1+ \alpha)+1},$$ where $[-]$ is the floor.
\end{lemma}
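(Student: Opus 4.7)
The plan is a direct translation: I would unpack the condition $X < n$ into the explicit contractibility statements $F(\hat S, X) \simeq *$ for each of the three families of slice spheres enumerated in the preceding definitions, and match them against the Mackey functor vanishing in the statement. Since each slice sphere is either of the form $S^V$ for a virtual representation $V$ or of the form $\Z/2_+ \wedge S^k$, its function spectrum is easily computed. Using that $\Z/2_+$ is self-dual in the equivariant stable category, I get $F(S^{k(1+\alpha)}, X) \simeq \Sigma^{-k(1+\alpha)}X$, $F(S^{k(1+\alpha)-1}, X) \simeq \Sigma^{1-k(1+\alpha)}X$, and $F(\Z/2_+ \wedge S^k, X) \simeq \Z/2_+ \wedge \Sigma^{-k}X$. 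Contractibility of each then becomes a homotopy-group vanishing condition on $X$.

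For the forward direction, the slice-null hypothesis yields $\underline{\pi}_{k(1+\alpha)+m}(X) = 0$ for $m \geq 0$ and $2k \geq n$ (from regular cells), $\underline{\pi}_{k(1+\alpha)-1+m}(X) = 0$ for $m \geq 0$ and $2k-1 \geq n$ (from shifted cells), and $\pi^e_{k+m}(X) = 0$ for $m \geq 0$ and $k \geq n$ (from induced cells, affecting only the underlying component). The key observation is that the Mackey functor $\underline{\pi}_V(X)$ encodes simultaneously its two values $\pi^{\Z/2}_V(X)$ and $\pi^e_{|V|}(X)$. Prescribing vanishing of the two Mackey functors $\underline{\pi}_{k(1+\alpha)}(X)$ and $\underline{\pi}_{k(1+\alpha)+1}(X)$ therefore captures vanishing in two consecutive underlying dimensions $2k$ and $2k+1$, and in two consecutive $\Z/2$-fixed degrees, so the three families of slice cell conditions collapse into these two Mackey functor statements.

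For the converse, I would reverse the argument: assuming $\underline{\pi}_{k(1+\alpha)}(X) = 0 = \underline{\pi}_{k(1+\alpha)+1}(X)$ for all $k \geq \lfloor n/2 \rfloor$, all four abelian groups $\pi^{\Z/2}_{k(1+\alpha)}(X)$, $\pi^{\Z/2}_{k(1+\alpha)+1}(X)$, $\pi^e_{2k}(X)$, $\pi^e_{2k+1}(X)$ vanish in that range, which I would use to recover contractibility of $F(\hat S, X)$ for every slice sphere $\hat S$ of dimension $\geq n$, separately for each of the three families.

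The main obstacle will be parity bookkeeping. The three families have dimensions of the form $2k$, $2k-1$, and $k$ respectively, so reconciling the single bound $k \geq \lfloor n/2 \rfloor$ with the dimensional thresholds of each family requires a careful case analysis depending on the parity of $n$. In particular, the interplay between the shifted cells (which provide the odd-dimensional constraints translating into the ``$+1$'' summand $\underline{\pi}_{k(1+\alpha)+1}(X)$ via a reindexing of the shift) and the regular cells is the delicate point of the proof.
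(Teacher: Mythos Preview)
Your proposal is correct and follows essentially the same approach as the paper. The paper's proof is a one-liner that cites \cite[Lemma 4.14]{HHR} and records the key isomorphisms $[\Z/2_+ \wedge S^{2l},X]^{\Z/2} \cong [S^{l(1+\alpha)},X]^e$ and $[\Z/2_+ \wedge S^{2l-1},X]^{\Z/2} \cong [S^{l(1+\alpha)-1},X]^e$; your plan unpacks exactly this content by hand, using the self-duality of $\Z/2_+$ and the observation that the $e$-component of $\underline{\pi}_{k(1+\alpha)}(X)$ and $\underline{\pi}_{k(1+\alpha)+1}(X)$ recovers $\pi^e_{2k}(X)$ and $\pi^e_{2k+1}(X)$, so that the induced-cell conditions are absorbed into the Mackey functor vanishing.
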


\begin{proof}
This is a reformulation of \cite[Lemma 4.14]{HHR}in the special case of the group $\Z/2$, using the fact that for all $l \in \Z$, there is an isomorphism $[\Z/2_+ \wedge S^{2l},X]^{\Z/2} \cong [S^{l(1+\alpha)},X]^e$ and $[\Z/2_+ \wedge S^{2l+1},X]^{\Z/2} \cong [S^{l(1+\alpha)-1},X]^e$.
\end{proof}

Using Bousfield localisation, we see that for all $n \in \Z$, and all $X \in \Z/2\Sp$, there is a unique cofiber sequence
$$ P_{n+1}X \rightarrow X \rightarrow P^n X,$$
such that $P_{n+1}X \geq n+1$ and $P^nX \leq n$.

When it comes to identify the slice filtration of a spectrum, the following lemma is essential.

\begin{lemma}[\emph{\cite[Lemma 4.16]{HHR}}] \label{lemma:unicityslicetower}
Suppose $X$ is a $\Z/2$-spectrum and that
$$\tilde{P}_{n+1} \rightarrow X \rightarrow \tilde{P}^n $$
is a fibration sequence with the property that $\tilde{P}^n \leq n$ and $\tilde{P}_{n+1}X \geq n+1$. Then the
canonical maps $\tilde{P}_{n+1} \rightarrow P_{n+1} X$ and $P^n X \rightarrow  \tilde{P}^n$ are weak equivalences.
\end{lemma}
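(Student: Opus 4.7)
The plan is to mimic the standard uniqueness argument for truncation triangles in a t-structure, adapted to the slice filtration. The only abstract input we need is the orthogonality property built into the definitions: if $A$ is slice $\geq n+1$ and $B$ is slice $\leq n$, then $F(A,B)$ is contractible. (Indeed, being slice $n$-connective means mapping trivially into every slice $n$-null object.) Once we have this, we construct two comparison maps and show they are weak equivalences.

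First I would construct the canonical comparison map $\tilde{P}_{n+1}\to P_{n+1}X$. Apply $F(\tilde{P}_{n+1},-)$ to the cofibre sequence $P_{n+1}X\to X\to P^nX$. Since $\tilde{P}_{n+1}\geq n+1$ and $P^nX\leq n$, orthogonality gives $F(\tilde{P}_{n+1},P^nX)\simeq *$, so the given map $\tilde{P}_{n+1}\to X$ lifts uniquely up to homotopy through $P_{n+1}X\to X$, producing the comparison map. A symmetric argument applied to $F(-,\tilde{P}^n)$ and the cofibre sequence $P_{n+1}X\to X\to P^nX$ produces the comparison $P^nX\to\tilde{P}^n$: the composite $P_{n+1}X\to X\to\tilde{P}^n$ is null by orthogonality, so $X\to\tilde{P}^n$ factors through $P^nX$.

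Next I would assemble both comparisons into a map of cofibre sequences with the identity on the middle term $X$. By the standard triangulated-category axiom that in a map of distinguished triangles any two weak equivalences force the third, it suffices to show that one of the vertical comparisons is a weak equivalence. To do this I would run the construction of the previous paragraph in the opposite direction, using the universal factorization property of $P_{n+1}X\to X$ (via the same orthogonality $F(P_{n+1}X,\tilde{P}^n)\simeq *$) and of $X\to P^nX$ to build maps $P_{n+1}X\to\tilde{P}_{n+1}$ and $\tilde{P}^n\to P^nX$ going the other way.

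The heart of the argument is then the uniqueness of these lifts. Both composites $\tilde{P}_{n+1}\to P_{n+1}X\to\tilde{P}_{n+1}$ and $P_{n+1}X\to\tilde{P}_{n+1}\to P_{n+1}X$ are, by construction, maps into $X$ that agree with the original structure maps after postcomposition; orthogonality again makes such lifts unique up to homotopy, forcing both composites to be the identity. Hence $\tilde{P}_{n+1}\to P_{n+1}X$ is a weak equivalence, and then the map of cofibre triangles forces $P^nX\to\tilde{P}^n$ to be one as well. The only subtle point, which I expect to be the main technical step, is ensuring that the two comparison maps constructed independently via the two different universal properties are mutually inverse — this is where the vanishing of the relevant mapping spectra must be used carefully, but it is exactly the same argument as uniqueness of truncation in an ordinary t-structure.
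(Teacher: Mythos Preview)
The paper does not supply its own proof of this lemma: it is stated with attribution to \cite[Lemma~4.16]{HHR} and used as a black box. So there is no in-paper argument to compare your proposal against.

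That said, your argument is the standard one and is correct. The orthogonality you invoke, namely that $F(A,B)\simeq *$ whenever $A\geq n+1$ and $B\leq n$, is exactly what the paper's definition of ``slice $n$-connective'' says (a spectrum $\geq n+1$ maps trivially into every slice $(n+1)$-null spectrum, i.e.\ every spectrum $\leq n$). From there your construction of the comparison maps via lifting/factoring through the two fibre sequences, and the verification that the composites are homotopic to the identity by uniqueness of such lifts, is precisely the usual uniqueness-of-truncation argument in a Bousfield localization (or $t$-structure) setting; this is also how the proof in \cite{HHR} proceeds. One small remark: you don't actually need the two-out-of-three argument on the triangle once you have shown $\tilde{P}_{n+1}\to P_{n+1}X$ and its inverse are mutually inverse; the same orthogonality argument run on the other end directly gives that $P^nX\to\tilde{P}^n$ and $\tilde{P}^n\to P^nX$ are mutually inverse as well.
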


Since a slice $n$-connective $ \Z/2$-spectrum $X$ is also slice $(n-1)$-connective, there is a natural morphism $P^nX \rightarrow P^{n-1}X$.

\begin{de}
Let $P^n_nX$ be the homotopy fiber of the map $P^nX \rightarrow P^{n-1}X$.
\end{de}

We are now ready to introduce the slice tower properly.

\begin{de}
Let $X$ be a $\Z/2$-spectrum. The {\it~slice tower} of $X$ is the tower of $\Z/2$-spectra
$$ \xymatrix{ \vdots \ar[d] \\ P^{n+1}X \ar[d] \\ P^{n}X \ar[d] \\ P^{n-1}X \ar[d] \\ \vdots}$$
and the {\it~$n$-slice of $X$} is the spectrum $P^n_nX$.
\end{de}

\begin{de} \label{de:slicespectralsequence}
The {\it~slice spectral sequence} of $X$ is the $\Z \times RO(\Z/2)$-graded spectral sequence of Mackey functors associated to the slice tower. It has $E_2$-page:
$$E_2^{s,V}(X) = \underline{\pi}_{V-s}(P_{dim(V)}^{dim(V)}X) \Rightarrow \underline{\pi}_{V-s}(X).$$
With this indexing, the $r^{th}$ differential has degree $(r,r-1)$.
\end{de}

We end the study of slices by a recognition principle for $(-1)$-slices and $0$-slices.

\begin{pro}[\emph{\cite[Proposition 4.50]{HHR}}] A spectrum $X$ is a $(-1)$-slice if and only if it is of the
form $X = \Sigma^{-1}HM$, with $M$ an arbitrary Mackey functor.

A spectrum $X$ is a $0$-slice if and only if it is of the form $HM$ with $M$ a Mackey
functor whose restriction map is a monomorphism.
\end{pro}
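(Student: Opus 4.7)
The plan is to translate the statement ``$X$ is an $n$-slice'' into vanishing conditions on homotopy Mackey functors using the lemma relating slice-nullity to the representation-graded homotopy, and then invoke the $t$-structure equivalence between the heart and Mackey functors from Proposition~\ref{pro:functorH}. Throughout, the recognition principle of Lemma~\ref{lemma:unicityslicetower} reduces the problem to verifying the two bounds independently.

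For the ``if'' direction, I would verify slice $n$-nullity and slice $n$-connectivity directly. For $\Sigma^{-1}HM$, the integer-graded homotopy Mackey functor is concentrated in degree $-1$, so the characterization of slice-nullity immediately gives $\Sigma^{-1}HM < 0$; slice $(-1)$-connectivity holds because $\Sigma^{-1}HM$ may be constructed from induced and shifted slice cells of dimension $\geq -1$. For the $0$-slice case, the upper bound $HM < 1$ requires in particular that $[\hat{S}, HM]^{\Z/2}$ vanish for the shifted slice sphere $\hat{S} = S^{\alpha}$ of dimension $1$. Applying $[-,HM]^{\Z/2}$ to the cofiber sequence $\Z/2_+ \rightarrow S^0 \rightarrow S^{\alpha}$ produces an exact sequence in which $\underline{\pi}_{\alpha}^{\Z/2}(HM)$ appears as $\ker(\rho_M)$; hence slice $1$-nullity at this cell is exactly the monomorphism hypothesis on $\rho_M$. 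Slice $0$-connectivity of $HM$ is clear from its Postnikov position.

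For the ``only if'' direction, I would argue that a $(-1)$-slice $X$ has both a lower and an upper slice bound, which via the dimension lemma squeezes the integer-graded homotopy Mackey functors of $X$ to vanish outside degree $-1$. The $t$-structure equivalence of Proposition~\ref{pro:functorH} then forces $X \simeq \Sigma^{-1} H\underline{\pi}_{-1}(X)$, and the unicity statement of Lemma~\ref{lemma:unicityslicetower} identifies this with the proposed form. The $0$-slice case runs identically, producing $X \simeq H\underline{\pi}_0(X)$; the extra constraint from the shifted slice sphere $S^{\alpha}$ in the slice $1$-nullity condition then forces the restriction of $\underline{\pi}_0(X)$ to be injective, by the same computation of $\underline{\pi}_{\alpha}^{\Z/2}$ as above.

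The main obstacle is the slice-connectivity half, which is genuinely more subtle than the slice-nullity half: it asks for a construction of $X$ from slice cells of the appropriate dimension rather than a vanishing of a cohomology group. The key is that once we know the integer-graded homotopy concentration, we have enough to fit $X$ into a cofiber sequence of the form required by Lemma~\ref{lemma:unicityslicetower}, and uniqueness of the slice tower finishes the job.
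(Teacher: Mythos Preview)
The paper does not supply a proof of this proposition at all: it is stated with attribution to \cite[Proposition~4.50]{HHR} and used as a black box. So there is no ``paper's own proof'' to compare against; your outline is a reconstruction of (one version of) the argument in the cited reference.

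Your sketch is broadly along the right lines and matches the shape of the argument one finds in \cite{HHR}: use the cofiber sequence $\Z/2_+ \to S^0 \to S^{\alpha}$ to identify $\underline{\pi}_{\alpha}^{\Z/2}(HM)$ with $\ker(\rho_M)$, so that slice $1$-nullity of $HM$ is exactly the injectivity of the restriction; and use the comparison between slice connectivity and Postnikov connectivity to pin down the integer-graded homotopy. Two places deserve a bit more care. First, the assertion that ``slice $0$-connectivity of $HM$ is clear from its Postnikov position'' and the analogous claim for $\Sigma^{-1}HM$ are not entirely formal: they rely on the general fact (proved separately in \cite{HHR}) that Postnikov $(n{-}1)$-connected spectra are slice $\geq n$ for $n \in \{-1,0\}$; you should cite or prove that rather than assert it. Second, in the ``only if'' direction for $(-1)$-slices, squeezing the integer-graded homotopy from both sides uses the companion implication (slice $\geq n$ forces Postnikov connectivity in the appropriate range), which again is a nontrivial input from \cite{HHR} and not a consequence of Lemma~\ref{lemma:unicityslicetower}. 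With those two inputs made explicit, your argument goes through.
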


\begin{cor}
The spectra $H\mz$ and $H\mf$ are $0$-slices.
\end{cor}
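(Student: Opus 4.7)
The plan is to apply the recognition principle in the immediately preceding proposition (\cite[Proposition 4.50]{HHR}), which characterizes $0$-slices as Eilenberg-MacLane $\Z/2$-spectra $HM$ whose underlying Mackey functor $M$ has monomorphic restriction map. Thus the entire proof reduces to inspecting the restriction maps of the two specific Mackey functors $\mz$ and $\mf$.

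First, I would recall from Definition \ref{de:partmack} that for any abelian group $C$ the Mackey functor $\underline{C}$ is given by
$$\xymatrix{ C \ar@/^/[d]^{=} \\ C \ar@/^/[u]^{2} } $$
so that in particular $\mz = \underline{\Z}$ and $\mf = \underline{\F}$ both have restriction map equal to the identity of the relevant abelian group. The identity is evidently a monomorphism in $\ab$.

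Applying the recognition principle to $M = \mz$ and $M = \mf$ then yields immediately that $H\mz$ and $H\mf$ are $0$-slices.

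There is no real obstacle: the content has already been packaged in the preceding proposition, and the only thing to verify is a trivial inspection of the definition of the two Mackey functors. (One could optionally remark, as the earlier discussion does, that this is consistent with \cite[Corollary 4.54]{HHR}, which asserts that $H\mz$ is the $0$-slice of the equivariant sphere spectrum, and that the same monomorphism-of-restriction criterion is what picks out Eilenberg-MacLane spectra occurring as $0$-slices of the slice filtration throughout this paper.)
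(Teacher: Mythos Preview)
Your proposal is correct and is exactly the intended argument: the paper states this corollary without proof because it follows immediately from the preceding recognition principle together with the fact, visible in Definition~\ref{de:partmack}, that the restriction maps of $\mz$ and $\mf$ are the identity and hence monomorphisms.
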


We end this subsection with the multiplicative properties of the slice spectral sequence.

\begin{de}
A $\Z/2$-spectrum $X$ is {\it~pure} if all its slice sections are of the form $\hat{S} \wedge H\mz$, where $\hat{S}$ is a wedge of regular and induced slice spheres.
\end{de}

\begin{pro}[\emph{\cite[Section 4.7]{HHR}}]  \label{pro:multiplicativeprop}
If $X$ and $Y$ are pure $\Z/2$-spectra, then there is a map of spectral sequences
$$E_r^{s,t}(X) \otimes E_r^{s',t'}(Y) \rightarrow E_r^{s+s',t+t'}(X\wedge Y)$$
representing the pairing
$$ \underline{\pi}_{*}(X) \otimes \underline{\pi}_{*}(Y) \rightarrow \underline{\pi}_{*}(X\wedge Y).$$
\end{pro}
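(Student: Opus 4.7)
The plan is to lift the pairing $X \wedge Y \to X \wedge Y$ to the level of the slice towers of $X$ and $Y$, thereby obtaining a pairing of filtered spectra, and then apply the standard procedure by which a pairing of filtered spectra induces a pairing of the associated spectral sequences.

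The first ingredient is the multiplicativity of the slice filtration: if $X \geq n$ and $Y \geq m$, then $X \wedge Y \geq n+m$. This fact from \cite{HHR}, valid without any purity assumption, produces natural maps $P_nX \wedge P_mY \to P_{n+m}(X \wedge Y)$ compatible with the filtration maps. Smashing the slice towers of $X$ and $Y$ then yields a bi-filtered spectrum mapping into the slice-filtered spectrum $X \wedge Y$, and in particular a map $P^n_nX \wedge P^m_mY \to P^{n+m}_{n+m}(X \wedge Y)$ after passing to sub-quotients.

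The second ingredient invokes the purity hypothesis in order to make this pairing of slices explicit. The slices $P^n_nX$ and $P^m_mY$ are wedges of spectra of the form $\hat{S} \wedge H\mz$ with $\hat{S}$ a regular or induced slice sphere. Their smash product $\hat{S}_n \wedge \hat{S}_m \wedge H\mz \wedge H\mz$ maps, via the multiplication $H\mz \wedge H\mz \to H\mz$ of the commutative ring $\Z/2$-spectrum $H\mz$, to $\hat{S}_n \wedge \hat{S}_m \wedge H\mz$. Using $S^{k(1+\alpha)} \wedge S^{l(1+\alpha)} \cong S^{(k+l)(1+\alpha)}$ together with the analogous identities involving induced slice spheres, one sees that $\hat{S}_n \wedge \hat{S}_m$ is a wedge of slice spheres of dimension $n+m$, so the target is a pure spectrum concentrated in slice degree $n+m$. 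By Lemma \ref{lemma:unicityslicetower} this identifies the target with $P^{n+m}_{n+m}(X \wedge Y)$ and shows along the way that $X \wedge Y$ is pure. Applying $\underline{\pi}_{\star}$ produces the pairing on $E_2$-pages, and compatibility with the differentials together with convergence to the pairing on $\underline{\pi}_{\star}(X \wedge Y)$ follow from the standard construction associating a spectral sequence pairing to a filtered pairing.

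The main obstacle is the identification of the pairing of slices in the second step. The general multiplicativity of the slice filtration is a consequence of the additivity of slice-sphere dimensions under smash products, but one really needs the specific description of pure slices together with the ring structure of $H\mz$ to produce a map landing in the correct $(n+m)$-slice of $X \wedge Y$; without purity the slices of $X \wedge Y$ become too complicated to describe, and the clean pairing on $E_2$-pages is lost.
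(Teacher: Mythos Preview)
The paper does not supply its own proof of this proposition: it is stated with a citation to \cite[Section 4.7]{HHR} and no argument is given. Your proposal is a correct sketch of the construction carried out there. The essential input is the multiplicativity of the slice filtration (if $X \geq n$ and $Y \geq m$ then $X \wedge Y \geq n+m$), which produces a map from the smash-product of the two slice towers into the slice tower of $X \wedge Y$; the rest is the standard passage from a pairing of filtered objects to a pairing of the associated spectral sequences.

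One remark on scope: the existence of the spectral sequence pairing does not actually require purity, and in \cite{HHR} it is stated in that generality. Purity enters only when one wants to identify the $E_2$-pages concretely as $H\mz_\star$-modules and to know that $X \wedge Y$ is again pure, which is how the paper uses the result. Your second paragraph is therefore doing more than what the bare proposition asserts, though it is exactly what the applications in the paper need. The one place to be a little careful is the sentence invoking Lemma \ref{lemma:unicityslicetower} to conclude that the product tower \emph{is} the slice tower of $X \wedge Y$: this requires checking that the associated graded pieces are genuine $(n+m)$-slices and that the quotients are $\leq n+m$, which follows from your identification of the slices but deserves to be stated as the conclusion rather than assumed along the way.
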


\subsection{Relationship between equivariant and non-equivariant Anderson duality}

In this section, we study the relationship between non-equivariant Anderson duality and our $\Z/2$-equivariant version via the forgetful functor $(-)^e$ and fixed points $(-)^{\Z/2}$. As observed in \cite{HS14}, fixed points commutes with Anderson duality only for strongly complete $\Z/2$-spectra. We will make this statement precise in Proposition \ref{pro:fixeddual}.

We first recall briefly the construction of non-equivariant Anderson duality, since this is classical, and quite simpler than its equivariant counterpart.

\begin{de}
Let $I$ be an injective abelian group and $\partial_I$ be the non-equivariant spectrum representing the cohomology theory
$$ X \mapsto Hom_{\Z}(\pi_{-*}(X), I).$$
Let $\partial_{\Z}$ be the homotopy fiber f the map $\partial_{\Q} \rightarrow \partial_{\Q/\Z}$, and $\nabla^e$ be the non-equivariant Anderson duality functor $F(-, \partial_{\Z})$.
\end{de}

As this subsection will be concerned in passing from the non-equivariant stable homotopy category to the equivariant one, we start by fixing some notations about this. In this subsection, one additional category of spectra appear: the category $\Z/2\Sp^{naive}$ of naive spectra (as opposed to the category $\Z/2\Sp$ of genuine spectra). The objects of this category are simply spectra together with an action of $\Z/2$. It is more of a technical point in this paper, so we rather refer to \cite[Introduction, section 1]{Le95} for this notion.

\begin{nota}
Let $\epsilon : \Z/2 \rightarrow \{e\}$ be a group homomorphism and $\epsilon^* : \Sp \rightarrow \Z/2\Sp^{naive}$ the induced functor. This functor takes a non-equivariant spectrum and produces a naive spectra with trivial action of $\Z/2$.
We also consider the change of universe adjunction
$$ i^* : \Z/2\sh \leftrightarrows \Z/2\sh^{naive} : i_*.$$
\end{nota}

Recall that by \cite[Lemma XVI.1.3]{May96}, the unit and counit of the last adjunction is a non-equivariant equivalence. 

We are now ready to study the relationship between $\Z/2$-equivariant and non-equivariant Anderson duality.

\begin{pro} \label{pro:nonequivduality}
There is a canonical map $  i_* \epsilon^*(\partial_{\Z}) \rightarrow d_{\Z}$, which is a non-equivariant equivalence. Consequently, the functors $ \nabla^e$ and $(\nabla( -))^e$ are canonically isomorphic.
\end{pro}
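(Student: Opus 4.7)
My plan is to first handle the case of an injective abelian group $I$ by identifying the underlying non-equivariant spectrum $(d_I)^e$ with $\partial_I$, construct a canonical map $\varphi_I \colon i_*\epsilon^*\partial_I \to d_I$ realizing this equivalence, and then pass to $I = \Z$ using the injective resolution $\Z \hookrightarrow \Q \twoheadrightarrow \Q/\Z$ together with the defining fibre sequences of $d_{\Z}$ and $\partial_{\Z}$. The main subtlety I anticipate is the equivariant construction of $\varphi_I$: one must lift the underlying non-equivariant equivalence $(d_I)^e \simeq \partial_I$ to a morphism in $\Z/2\sh$, which requires a careful appeal to Brown representability and the adjunction $i^* \dashv i_*$.

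The identification step uses the defining property of $d_I$ in Proposition \ref{pro:bcdual} applied to $\Z/2_+ \wedge X$ together with the standard identity $\underline{\pi}^{\Z/2}_*(\Z/2_+ \wedge X) = \pi_*(X^e)$: one obtains a natural isomorphism
\[ [X, d_I]_e = [\Z/2_+ \wedge X, d_I]^{\Z/2} \cong \Hom_{\Z}(\pi_{-*}(X^e), I) = [X^e, \partial_I], \]
which by Yoneda gives $(d_I)^e \simeq \partial_I$ in the non-equivariant stable homotopy category. Via the adjunction $i^* \dashv i_*$, the desired map $\varphi_I$ is equivalent to a naive equivariant map $\epsilon^*\partial_I \to i^* d_I$; I would produce this via Brown representability on the naive category from the identification above, exploiting that the source has trivial action. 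By construction $\varphi_I$ restricts on underlyings to the chosen equivalence $(d_I)^e \simeq \partial_I$, and is therefore a non-equivariant equivalence.

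To extend to $\Z$, apply $i_*\epsilon^*$ to the fibre sequence $\partial_{\Z} \to \partial_{\Q} \to \partial_{\Q/\Z}$; this remains a cofibre sequence of $\Z/2$-spectra since both $\epsilon^*$ and $i_*$ are exact. Naturality of $\varphi_{(-)}$ in the coefficient group yields a map of cofibre sequences into $d_{\Z} \to d_{\Q} \to d_{\Q/\Z}$, and the map between the corresponding fibres is the desired $\varphi_{\Z}\colon i_*\epsilon^*\partial_{\Z} \to d_{\Z}$, a non-equivariant equivalence by two-out-of-three applied to the underlying cofibre sequences. For the functorial consequence, given any $\Z/2$-spectrum $E$, the chain of natural equivalences
\[ (\nabla E)^e = F(E, d_{\Z})^e \simeq F(E^e, (d_{\Z})^e) \simeq F(E^e, \partial_{\Z}) = \nabla^e(E^e) \]
uses only that $(-)^e$ commutes with internal function spectra (an instance of the analogous fact for $i^*$, combined with the non-equivariant equivalence $i_*i^* \simeq \mathrm{id}$ from \cite[Lemma XVI.1.3]{May96}) and the just-established equivalence $(d_{\Z})^e \simeq \partial_{\Z}$.
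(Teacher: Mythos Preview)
Your overall architecture matches the paper's: construct a map $\epsilon^*\partial_I \to i^*d_I$ of naive $\Z/2$-spectra for each injective $I$, pass to the adjoint $i_*\epsilon^*\partial_I \to d_I$, check it is an underlying equivalence by testing on $\Z/2_+\wedge Y$, then take fibres along $\Q \to \Q/\Z$ and deduce the statement about $\nabla^e$ versus $(\nabla(-))^e$. The order is reversed --- you identify $(d_I)^e \simeq \partial_I$ first and then try to build the map, while the paper builds the map first and then checks it restricts to an equivalence --- but the content is the same.

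There is, however, a genuine gap at the construction of $\varphi_I$. Knowing that $(d_I)^e \simeq \partial_I$ as non-equivariant spectra does \emph{not} by itself produce a map $\epsilon^*\partial_I \to i^*d_I$ of naive $\Z/2$-spectra. Saying ``Brown representability on the naive category from the identification above'' is not enough: Brown representability (or Yoneda) requires a natural transformation of functors on the naive category as input, and an underlying equivalence is only data at the free orbit. The phrase ``exploiting that the source has trivial action'' reduces the problem to producing a non-equivariant map $\partial_I \to (i^*d_I)^{\Z/2}$, but you have not supplied one; the target here is genuine fixed points, not the underlying spectrum you identified. The paper fills exactly this gap by writing down, for every $\Z/2$-space $X$, the explicit natural transformation
\[
[X,\epsilon^*\partial_I]^{\Z/2}_{\mathrm{naive}} \;\cong\; [X/(\Z/2),\partial_I] \;\cong\; \Hom_{\Z}\bigl(\pi_{-*}(X/(\Z/2)),I\bigr) \;\longrightarrow\; \Hom_{\Z}\bigl(\underline{\pi}^{\Z/2}_{-*}(X),I\bigr) \;\cong\; [X,i^*d_I]^{\Z/2}_{\mathrm{naive}},
\]
where the arrow is induced by the projection $X \to X/(\Z/2)$. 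Yoneda then gives the naive map, and specialising to $X=\Z/2_+\wedge Y$ shows it is an underlying equivalence (the projection becomes an isomorphism on $\pi_*$ in that case). This is the missing ingredient in your sketch; without it, step (2) of your plan does not go through.

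A minor point: the adjunction you use is $i_* \dashv i^*$ (left adjoint on the naive side), not $i^* \dashv i_*$ as written; your usage of it is correct even if the notation is reversed.
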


\begin{proof}
We will play with the various adjunction we first build a map between naive $\Z/2$-spectra
$$ \epsilon^* \partial_{\Z} \rightarrow i^*  d_{\Z}.$$

Let $I$ be an injective abelian group. For all $X \in \Z/2 \T$, there is a map 

\begin{eqnarray*}
[X, \epsilon^* \partial_I]_{naive}^{\Z/2} & \cong & [\frac{X}{\Z/2}, \partial_I] \\
& \cong & \mathrm{Hom}_{\Z}(\pi_{-*}(\frac{X}{\Z/2}), I) \\
& \rightarrow &  \mathrm{Hom}_{\Z}(\underline{\pi}^{\Z/2}_{-*}(X),I) \\
& \cong & [X, d_I]^{\Z/2} \\
& \cong & [X, i^* d_I]^{\Z /2}_{naive}, \\
\end{eqnarray*}

where the first isomorphism comes from the adjunction \cite[Proposition 3.12]{MM}, and the third map is induced from the projection $X \rightarrow \frac{X}{\Z/2}$. The last one is a consequence of \cite[Proposition II.1.4]{LMS}, since $X = \Sigma^{\infty}X \cong i_* \Sigma^{\infty}_{naive} X$ with evident notations.

By Yoneda, this gives a map $\epsilon^*\partial_I \rightarrow i^* d_I$, and the adjunction between change of universes functors gives the desired map $  i_* \epsilon^*(\partial_{\Z}) \rightarrow d_{\Z}$.

As a consequence, using the injective resolution $\Z \rightarrow \Q \rightarrow \Q/\Z$ of $ \Z$, this provides a map $i^* \epsilon^* \partial_{\Z} \rightarrow d_{\Z}$.
Now, for all $Y \in \T$, consider the previous map for $X = \Z/2_+ \wedge Y$. This gives a morphism $[Y, \partial_I] \rightarrow [X,d_I]^{\Z/2} \cong [Y, (d_I)^e]$. The fact that the map $\underline{\pi}^{\Z/2}_{-*}(X) \rightarrow \pi_{-*}(\frac{X}{\Z/2})$ is an isomorphism in this case gives the weak equivalence $  (d_{I})^e \cong \partial_{I}$, by Yoneda, and we get the weak equivalence $  (d_{\Z})^e \cong \partial_{\Z}$.

Finally, the functor represented by these two non-equivariant spectra are respectively $(\nabla( -))^e$ and $ \nabla^e$.
\end{proof}

\begin{pro} \label{pro:fixeddual}
Let $E$ be a $\Z/2$-spectrum such that $ \widetilde{E\Z/2} \wedge E \cong 0$. Then, there is a canonical weak equivalence of non-equivariant spectra
$$ (\nabla E)^{\Z/2} \cong \nabla^e (E^{\Z/2}).$$
\end{pro}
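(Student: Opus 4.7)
The plan is to show that both $(\nabla E)^{\Z/2}$ and $\nabla^{e}(E^{\Z/2})$ are canonically equivalent to the homotopy-fixed-point spectrum $(\nabla^{e}E^{e})^{h\Z/2}$, where $E^{e}$ carries its residual $\Z/2$-action.

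First, I would exploit the completeness hypothesis to show that $\nabla E$ is cofree. The weak equivalence $E\Z/2_+\wedge E \to E$ dualizes to a weak equivalence
$$\nabla E \cong F(E\Z/2_+\wedge E, d_{\Z}) \cong F(E\Z/2_+, \nabla E).$$
Taking $\Z/2$-fixed points, the right-hand side is by definition the homotopy fixed-point spectrum $(\nabla E)^{h\Z/2}$, and this only sees the underlying non-equivariant spectrum $(\nabla E)^{e}$ with its residual $\Z/2$-action. By Proposition~\ref{pro:nonequivduality} the latter is $\nabla^{e}(E^{e})$, so
$$(\nabla E)^{\Z/2} \cong (\nabla E)^{h\Z/2} \cong (\nabla^{e}E^{e})^{h\Z/2}.$$

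Second, I would manipulate the right-hand side of the statement. Completeness realises $E$ as a free $\Z/2$-spectrum, so the Adams isomorphism yields
$$E^{\Z/2} \cong (E\Z/2_+\wedge E)^{\Z/2} \cong E\Z/2_+\wedge_{\Z/2} E^{e} = (E^{e})_{h\Z/2}.$$
Applying $\nabla^{e}=F(-,\partial_{\Z})$ and using the classical adjunction $F(X_{h\Z/2},Y)\cong F(X,Y)^{h\Z/2}$ valid when $Y$ has trivial $\Z/2$-action, I obtain
$$\nabla^{e}(E^{\Z/2}) \cong F(E^{e},\partial_{\Z})^{h\Z/2} = (\nabla^{e}E^{e})^{h\Z/2}.$$

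Chaining these two identifications produces the desired weak equivalence $(\nabla E)^{\Z/2} \cong \nabla^{e}(E^{\Z/2})$. Each intermediate step is induced by a natural transformation (the completeness equivalence, the comparison of Proposition~\ref{pro:nonequivduality}, and the Adams isomorphism), so the composite map is canonical in $E$. The main obstacle is the clean invocation of the Adams isomorphism in the genuine $\Z/2$-equivariant setting; should this prove awkward, I would instead construct the comparison map directly from Proposition~\ref{pro:nonequivduality} and verify it is a weak equivalence by comparing the $(-)_{\Z/2}$-component of the equivariant universal coefficients sequence of Proposition~\ref{pro:univcoeff} with the non-equivariant one applied to $E^{\Z/2}$. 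These sequences share identical outer terms because $\pi^{\Z/2}_{*}(E)=\pi_{*}(E^{\Z/2})$, and the five lemma then finishes the argument.
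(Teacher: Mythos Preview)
Your argument is correct and is essentially the paper's own proof, reorganised around the intermediate object $(\nabla^{e}E^{e})^{h\Z/2}$. Your second step (Adams isomorphism followed by the adjunction $F(X_{h\Z/2},Y)\cong F(X,Y)^{h\Z/2}$) is exactly what the paper does; your first step (cofreeness of $\nabla E$ and the identification $(\nabla E)^{h\Z/2}\cong(\nabla^{e}E^{e})^{h\Z/2}$ via Proposition~\ref{pro:nonequivduality}) repackages the paper's final manipulation, where the genuine comparison map $i_*\epsilon^*\partial_{\Z}\to d_{\Z}$ becomes an equivalence after applying $F(E\Z/2_+,-)$. One small point worth tightening: when you pass from $(\nabla E)^{h\Z/2}$ to $(\nabla^{e}E^{e})^{h\Z/2}$ you are implicitly using that the equivalence $(\nabla E)^{e}\simeq\nabla^{e}(E^{e})$ of Proposition~\ref{pro:nonequivduality} is $\Z/2$-equivariant for the residual actions; this is indeed true because that proposition produces a map of \emph{genuine} $\Z/2$-spectra, but it would be cleaner to say so (or, as the paper does, to stay inside the genuine category and argue that $F(E\Z/2_+,i_*\epsilon^*\partial_{\Z})\to F(E\Z/2_+,d_{\Z})$ is a genuine equivalence since $E\Z/2_+$ is free).
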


\begin{proof}
By hypothesis, $E^{\Z/2} \cong (E\Z/2_+\wedge E)^{\Z/2}$. Now, Adams isomorphism (see for example \cite[Theorem XVI.5.4]{May96}) yields a weak equivalence $$(E\Z/2_+\wedge E)^{\Z/2} \cong \frac{i^*(E\Z/2_+\wedge E)}{\Z/2}.$$
Applying $ \nabla^e$, this gives
\begin{eqnarray*}
\nabla^e(E^{\Z/2}) & \cong & \nabla^e(\frac{i^*(E\Z/2_+\wedge E)}{\Z/2}) \\
& \cong & F(\frac{i^*(E\Z/2_+\wedge E)}{\Z/2}, \partial_{\Z}) \\
& \cong & F(E\Z/2_+\wedge E, i_* \epsilon^*(\partial_{\Z}))^{\Z/2} \\
& \cong & F(E, F(E\Z/2_+ , i_* \epsilon^*(\partial_{\Z})))^{\Z/2}.
\end{eqnarray*}

The map $  i_*\epsilon^*(\partial_{\Z}) \rightarrow d_{\Z}$ provided by Proposition \ref{pro:nonequivduality} induces a weak equivalence $F(E\Z/2_+ , i_*\epsilon^*(\partial_{\Z})) \cong F(E\Z/2_+ , d_{\Z})$. Thus,
\begin{eqnarray*}
 F(E, F(E\Z/2_+ , i^* \epsilon^* (\partial_{\Z})))^{\Z/2} & \cong & F(E, F(E\Z/2_+ , d_{\Z}))^{\Z/2} \\
 & \cong & F(E\Z/2_+ \wedge E , d_{\Z})^{\Z/2} \\
 & \cong & (\nabla E)^{\Z/2},
\end{eqnarray*}
where the last weak equivalence comes from the weak equivalence $E \cong E\Z/2_+ \wedge E$.
\end{proof}

\section{Integral Morava K-theory with reality}

\subsection{Definitions}
In this subsection, we recall the construction of equivariant analogues of integral Morava K-theories, as explained in \cite[section 3]{HK01}. Recall from \textit{loc cit} that there are generators $r_i$, $i>0$ of $M\R_{*(1+\alpha)}$. These are also, by definition, the generators denoted $r_i$ considered in \cite{HHR} (introduced in Lemma 5.33).
Recall that there is a construction of $M\R$-module spectra obtained by {\em killing} a regular ideal $(r_1, \hdots r_n)$, lifting one of $MU_* = \Z[x_i|i \geq 1]$, where $|x_i|=2i$.
Recall also that $M\R_{*(1+\alpha)} \cong \Z[r_i|i \geq 1]$, where $|r_i|=i(1+\alpha)$ are equivariant lifts of the $x_i$.

\begin{de} \label{de:moravareal}
\begin{itemize}
\item We call connective Morava K-theory with reality the $M\R$-module $\Z/2$-spectra $k\R(n)$ obtained by killing the ideal generated by all $r_i$, for all $i \neq 2^n-1$. By construction, there is an $M\R$-module map
$$ M\R \rightarrow k\R(n).$$ Denote by $v_n = r_{2^n-1}$.
\item Consider the $M\R$-module localisation functor $(-)[v_n^{-1}]$ with respect to $v_n$. We call Morava K-theory with reality the $M\R$-module $\Z/2$-spectrum $K\R(n)$ obtained as $k\R(n)[v_n^{-1}]$. It comes with a natural map $M\R[v_n^{-1}] \rightarrow K\R(n)$.
\end{itemize}
\end{de}

\subsection{The slice tower of $K\R(n)_{\star}$}
Our main tool here is the {\em slice spectral sequence} of \cite{HHR} and the various differentials which have been determined by \cite{HK01}.

We recall the structure of the $\Z/2$-equivariant Steenrod algebra.

\begin{de}
Let $\ste^{\star} = H\mf^{\star}_{\Z/2}H\mf$ the algebra of stable $H\mf$-cohomology operations.
\end{de}

Hu and Kriz  \cite{HK01} computed a presentation of the $\Z/2$-equivariant modulo $2$ dual Steenrod algebra
 $$\ste_{\star} = H\mf_{\star}^{\Z/2}H\mf = H\mf_{\star}^{\Z/2}[\xi_{i+1},\tau_i|i\geq0]/I,$$
 where $I$ is the ideal generated by the relation $\tau_i^2 = a\xi_{i+1}+ (a\tau_0+\sigma^{-1})\tau_{i+1}$, and $\sigma^{-1}$ is the class defined in Notation \ref{nota:sigma}.

Recall that there is a duality relating cohomology operations to homology cooperations.

\begin{de}
Denote by $\mathbb{Q}_n : H\mf  \rightarrow \Sigma^{(2^n-1)(1+\alpha)+1} H\mf $ the $H\mf$-cohomology operation dual  to the element $\tau_n \in H\mf_{(2^n-1)(1+\alpha)+1}H\mf$. It is called the $(n+1)^{st}$ Milnor operation. 
\end{de}

\begin{lemma}
The operation $\mathbb{Q}_n$ satisfies $(\mathbb{Q}_n)_{\star}(\sigma^{-2^n}) = a^{2^{n+1}-1}$.
\end{lemma}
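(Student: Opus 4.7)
The plan is to reduce the computation to a non-vanishing statement via dimensional analysis. First, since $|\sigma^{-2^n}| = 2^n(1-\alpha)$ and $|\mathbb{Q}_n| = (2^n-1)(1+\alpha)+1$, a direct subtraction shows that the class $(\mathbb{Q}_n)_\star(\sigma^{-2^n})$ lies in the Mackey functor $H\mf_{-(2^{n+1}-1)\alpha}$. By the description of $H\mf_\star$ in Figure \ref{fig:coefff}, this Mackey functor is $<\F>$, whose fixed-point group is $\F$, generated by $a^{2^{n+1}-1}$. Hence the lemma reduces to showing that $(\mathbb{Q}_n)_\star(\sigma^{-2^n})$ is non-zero.

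For the base case $n=0$, I would identify $\mathbb{Q}_0$ as the Bockstein of Mackey functors arising (via Proposition \ref{pro:functorH}) from the short exact sequence $0 \to \mz \xrightarrow{2} \mz \to \mf \to 0$. Inspection of Figure \ref{fig:coeffz} shows that $H\mz_{1-\alpha} = R(\Z_-)$, whose fixed-point group is $0$. Therefore $\sigma^{-1}$ admits no integral lift, and the Bockstein $\mathbb{Q}_0(\sigma^{-1})$ is consequently non-zero --- and thus equals $a$ by the above dimensional analysis.

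For the inductive step ($n \geq 1$), the approach is to use the Hu-Kriz presentation of the equivariant dual Steenrod algebra $\ste_\star$ together with the standard duality pairing between $\ste^\star$ and $\ste_\star$. Since $\mathbb{Q}_n$ is dual to $\tau_n$, the class $(\mathbb{Q}_n)_\star(\sigma^{-2^n})$ is extracted as the coefficient of $\tau_n$ in the Milnor basis expansion of $\eta_R(\sigma^{-2^n}) = \eta_R(\sigma^{-1})^{2^n}$, where $\eta_R: H\mf_\star \to \ste_\star$ denotes the right unit and the last equality uses the Frobenius in characteristic $2$. Iterated application of the defining relation $\tau_i^2 = a\xi_{i+1} + (a\tau_0 + \sigma^{-1})\tau_{i+1}$ then simplifies the expression: each squaring of $\tau_i$ produces a $\tau_{i+1}$ contribution weighted by appropriate powers of $a$. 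After $n$ iterations one produces a $\tau_n$-term whose coefficient lies in the one-dimensional group identified above, and hence must equal $a^{2^{n+1}-1}$.

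The main obstacle is the combinatorial bookkeeping in the inductive step, namely verifying that the coefficient of $\tau_n$ produced by the repeated substitutions is genuinely non-zero. Once non-vanishing is secured, the exact value $a^{2^{n+1}-1}$ is forced by the dimensional constraint from Figure \ref{fig:coefff}, so no further computation is needed.
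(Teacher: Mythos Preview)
Your core approach --- computing via the right unit $\eta_R$ of the Hopf algebroid $(H\mf_\star^{\Z/2}, \ste_\star)$ and using that $\eta_R$ is an algebra map --- is exactly the paper's. The paper's proof is terser: it simply records $\eta_R(a)=a$ and $\eta_R(\sigma^{-1}) = \sigma^{-1} + a\tau_0$ and asserts that the formula follows by multiplicativity, handling all $n$ uniformly without your separate Bockstein treatment of $n=0$. Your dimensional-analysis reduction to a non-vanishing statement is a pleasant shortcut not present in the paper; note, though, that to seed your iteration you still need the explicit formula $\eta_R(\sigma^{-1}) = \sigma^{-1} + a\tau_0$ from Hu--Kriz, which you do not state --- the relation on $\tau_i^2$ alone produces no $\tau_0$ to square.
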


\begin{proof}
As $H\mf$ is a ring spectrum, the pair $(H\mf_{\star}^{\Z/2},H\mf_{\star}^{\Z/2}H\mf)$ has a natural structure of a Hopf algebroid. The action of $H\mf^{\star}_{\Z/2}H\mf$ on $H\mf_{\star}^{\Z/2}$ is determined by the unit of the Hopf algebroid $(H\mf_{\star}^{\Z/2},H\mf_{\star}^{\Z/2}H\mf)$. \\
Now, $\eta_R(a)=a$ and $\eta_R(\sigma^{-1}) = \sigma^{-1} + a \tau_0$  together with the fact that $ \eta_R$ is an algebra map gives the desired formula.
\end{proof}

\begin{lemma} \label{lemma:slicekrn}
The slice tower of $K\R(n)$ is
$$ \hdots \Sigma^{k(2^n-1)(1+\alpha)}k\R(n) \stackrel{v_n}{\rightarrow} \Sigma^{(k-1)(2^n-1)(1+\alpha)}k\R(n)  \rightarrow \hdots.$$
the $2k(2^n-1)^{th}$ slice $P^{2k(2^n-1)}(K\R(n))$ is weakly equivalent $\Sigma^{k(2^n-1)(1+\alpha)}H\mz$, and all other slices are contractible.
The composite $$ H\mz \rightarrow \Sigma^{(2^n-1)(1+\alpha)} k\R(n) \rightarrow \Sigma^{(2^n-1)(1+\alpha)+1} H\mz $$
is an integral lift of the Milnor operation $\mathbb{Q}_n : H\mf  \rightarrow \Sigma^{(2^n-1)(1+\alpha)+1} H\mf $, which satisfies $(\mathbb{Q}_n)_{\star}(\sigma^{-2^n}) = a^{2^{n+1}-1}$.
\end{lemma}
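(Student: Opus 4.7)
The plan is to establish the slice tower of $k\R(n)$ first using the Slice Theorem of \cite{HHR}, extend it to $K\R(n)$ via the $v_n$-localization, and then identify the connecting map.

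\emph{Step 1 (slice tower of $k\R(n)$).} The $\Z/2$-spectrum $k\R(n)$ is built from $M\R$ by killing the regular sequence $\{r_i : i \neq 2^n-1\}$, leaving only the generator $v_n = r_{2^n-1}$ of degree $(2^n-1)(1+\alpha)$. The Slice Theorem of Hill--Hopkins--Ravenel then forces every slice of $k\R(n)$ to be a wedge of regular slices of the form $\Sigma^{m(1+\alpha)}H\mz$ indexed by the monomial basis in the surviving generators. Only monomials $v_n^k$ ($k \geq 0$) remain, so $P^{2k(2^n-1)}_{2k(2^n-1)}(k\R(n)) \simeq \Sigma^{k(2^n-1)(1+\alpha)}H\mz$ for $k \geq 0$ and all other slices are contractible. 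Under this identification the truncation $P_{\geq 2k(2^n-1)}k\R(n)$ is $\Sigma^{k(2^n-1)(1+\alpha)}k\R(n)$ and the tower map is multiplication by $v_n$; that $k\R(n)$ is pure in the sense of the excerpt is therefore automatic.

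\emph{Step 2 (passing to $K\R(n)$).} Since $K\R(n) = \mathrm{hocolim}_j\, \Sigma^{-j(2^n-1)(1+\alpha)}k\R(n)$ under iterated multiplication by $v_n$, the candidate slice tower extends the $k\R(n)$-tower to all $k \in \Z$. To apply Lemma \ref{lemma:unicityslicetower} I would verify: each $\Sigma^{k(2^n-1)(1+\alpha)}k\R(n)$ is slice $\geq 2k(2^n-1)$ (obtained from $k\R(n) \geq 0$ and the fact that smashing with a regular slice sphere $S^{k(1+\alpha)}$ shifts slice connectivity by $2k$); each cofiber $\Sigma^{k(2^n-1)(1+\alpha)}H\mz$ is a $2k(2^n-1)$-slice; and the tower converges to $K\R(n)$ in both directions because the homotopy limit of the upper tower vanishes (slice connectivity goes to $+\infty$) and the colimit of the lower tower is precisely $k\R(n)[v_n^{-1}]$. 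The claimed identification of $P^{2k(2^n-1)}_{2k(2^n-1)}K\R(n)$ with $\Sigma^{k(2^n-1)(1+\alpha)}H\mz$ and triviality of the other slices follow.

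\emph{Step 3 (identifying the $k$-invariant).} The composite in the statement is, by construction, the attaching map between two consecutive slices of the tower, i.e.\ the first nonzero differential of the slice spectral sequence out of the class $1 \in \underline{\pi}_0(H\mz)$. It is therefore a stable integral cohomology operation $H\mz \to \Sigma^{(2^n-1)(1+\alpha)+1}H\mz$. I would argue it is an integral lift of $\mathbb{Q}_n$ in two pieces: first, the Hu--Kriz computation of $\ste^{\star}$ recalled earlier shows that the only nonzero primitive mod $2$ operation of bidegree $(2^n-1)(1+\alpha)+1$ is a scalar multiple of $\mathbb{Q}_n$; second, the operation must be nonzero, since $v_n \in \underline{\pi}_{(2^n-1)(1+\alpha)}(K\R(n))$ would otherwise survive as a permanent nonzero class killing the expected homotopy. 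Non-equivariantly, the underlying Postnikov $k$-invariant of $k(n)$ is the classical Milnor operation $Q_n$, and the restriction of $v_n$ to the underlying spectrum coincides with the non-equivariant $v_n$, which pins down the scalar and promotes the identification to the equivariant setting. The formula $(\mathbb{Q}_n)_{\star}(\sigma^{-2^n})=a^{2^{n+1}-1}$ is then the preceding lemma applied to this operation.

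\emph{Main obstacle.} The bookkeeping of Steps 1--2 is routine once the Slice Theorem is in hand; the genuine content is Step 3. Identifying the attaching map abstractly produced by the slice machinery with the explicit Milnor operation $\mathbb{Q}_n$ defined via the dual Steenrod algebra requires bridging two different descriptions of the same operation. The cleanest way out is the combination of the uniqueness on $\ste^{\star}$ in that bidegree with the nontriviality argument coming from the nonvanishing of $v_n$, but writing out this comparison carefully (in particular that no $a$-divisible correction terms appear for the integral lift) is where the argument needs the most care.
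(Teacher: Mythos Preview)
Your Steps~1--2 align with the paper's argument: both invoke the Hill--Hopkins--Ravenel slice machinery (the paper cites \cite[Proposition~4.64]{HHR} directly, which is the relevant output of the Slice Theorem for quotients of $M\R$) to identify the slices of $K\R(n)$ with the shifts $\Sigma^{k(2^n-1)(1+\alpha)}H\mz$. Your more explicit discussion of the passage from $k\R(n)$ to $K\R(n)$ via the colimit and Lemma~\ref{lemma:unicityslicetower} is fine and makes explicit what the paper leaves implicit.

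The genuine divergence is in Step~3. The paper does not attempt to identify the $k$-invariant from scratch: it simply cites \cite{HK01}, where Hu and Kriz compute the first $k$-invariant of the connective cover of $K\R(n)$ and identify it as an integral lift of $\mathbb{Q}_n$. Your alternative---arguing via uniqueness of primitive operations in the relevant bidegree of $\ste^{\star}$ together with nontriviality---is a reasonable strategy, but as written it has two soft spots. First, the $k$-invariant is an arbitrary element of $[H\mz,\Sigma^{(2^n-1)(1+\alpha)+1}H\mz]$, and you have not explained why its mod~$2$ reduction must be \emph{primitive} (or, equivalently, why the full group of operations in that bidegree is spanned by $\mathbb{Q}_n$ modulo decomposables that vanish on the relevant classes); this is exactly the kind of computation \cite{HK01} supplies. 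Second, your nontriviality sentence is garbled: $v_n$ \emph{does} survive to a nonzero class in $\underline{\pi}_{(2^n-1)(1+\alpha)}(K\R(n))$, so its survival cannot be the obstruction; what you want is that the underlying non-equivariant $k$-invariant of $k(n)$ is $Q_n\neq 0$, which you do state afterward and which suffices. If you tighten the uniqueness claim (or simply cite \cite{HK01} as the paper does), the argument goes through.
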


\begin{proof}
We already know that
\begin{itemize}
\item the homotopy $\pi_{*}^e(K\R(n))$ is free abelian, as it is the homotopy groups of the non-equivariant $n^{th}$ integral Morava K-theory, which is obtained by killing a regular sequence in $MU_*$,
\item the classes $v_n^k \in \pi_{k(1+\alpha)(2^n-1)}(K\R(n))$ are equivariant refinements of generators of $\pi_{*}^e(K\R(n))$.
\end{itemize}
Thus, by \cite[Proposition 4.64]{HHR}, the canonical maps $$\Sigma^{k(2^n-1)(1+\alpha)}H\mz \rightarrow P_{2k(2^n-1)}^{2k(2^n-1)}(K\R(n))$$ are weak equivalences.

Finally, \cite{HK01} identifies the first $k$-invariant of the connective cover of $K\R(n)$ with a lift of the operation $\mathbb{Q}_n$.
\end{proof}

\begin{rk}
This implies $\pi_0(K\R(n)) \cong \mz$, and in particular $[S^0, K\R(n)]^{\Z/2} \cong \Z$, for degree reasons.
\end{rk}

The result of Lemma \ref{lemma:slicekrn} is expressed in the following diagram, where the dotted arrows represent degree $+1$ maps, {\it~e.g.} $\delta : H\mz \rightarrow \Sigma^{(2^n-1)(1+ \alpha)+1} k\R(n)$.

$$ \xymatrix{ \Sigma^{(2^n-1)(1+\alpha)}k\R(n) \ar[r] \ar[d]_{v_n} & \Sigma^{(2^n-1)(1+\alpha)}H \mz \\
k\R(n) \ar[r] \ar[d]_{v_n} & H \mz  \ar@{-->}[ul]^{ \delta} \ar@{-->}[u]_{ \mathbb{Q}_n} \\
\Sigma^{-(2^n-1)(1+\alpha)} k\R(n) \ar[r] \ar[d]_{v_n} & \Sigma^{-(2^n-1)(1+\alpha)} H \mz \ar@{-->}[ul]^{ \delta} \ar@{-->}[u]_{  \mathbb{Q}_n} \\
\Sigma^{-2(2^n-1)(1+\alpha)} k\R(n) \ar[r] \ar[d]_{v_n} & \Sigma^{-2(2^n-1)(1+\alpha)} H \mz \ar@{-->}[ul]^{ \delta} \ar@{-->}[u]_{  \mathbb{Q}_n} \\
\Sigma^{-3(2^n-1)(1+\alpha)} k\R(n) \ar[r] \ar[d]_{v_n} & \Sigma^{-3(2^n-1)(1+\alpha)} H \mz \ar@{-->}[ul]^{ \delta} \ar@{-->}[u]_{ \mathbb{Q}_n} \\
 \vdots \ar[d] & \\
 K\R(n) & } $$

\begin{cor}
The $E_2$-page of the slice spectral sequence for $K\R(n)_{\star}^{\Z/2}$ is isomorphic to $H\mz_{\star}^{\Z/2}[v_n^{\pm 1}]$.
\end{cor}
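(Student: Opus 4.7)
The plan is to feed Lemma~\ref{lemma:slicekrn} directly into Definition~\ref{de:slicespectralsequence} and then identify the resulting bigraded object multiplicatively.

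First, I would note that by Lemma~\ref{lemma:slicekrn} the only non-trivial slices of $K\R(n)$ sit in dimensions $2k(2^n-1)$ for $k \in \Z$, and $P^{2k(2^n-1)}_{2k(2^n-1)}K\R(n) \simeq \Sigma^{k(2^n-1)(1+\alpha)} H\mz$. Plugging this into the formula
$$E_2^{s,V}(K\R(n)) = \underline{\pi}_{V-s}\bigl(P^{\dim V}_{\dim V}K\R(n)\bigr),$$
the $E_2$-page vanishes unless $\dim V = 2k(2^n-1)$ for some $k$, in which case it equals $\underline{\pi}_{V-s-k(2^n-1)(1+\alpha)}(H\mz)$. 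Evaluating the Mackey functor at $S^0$, i.e.\ taking the $\Z/2$-component, the entire $E_2$-page as a $\Z \times RO(\Z/2)$-graded abelian group is
$$\bigoplus_{k \in \Z} \Sigma^{k(2^n-1)(1+\alpha)} H\mz^{\Z/2}_{\star},$$
where the $k$-summand is placed in the appropriate slice filtration degree.

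Next, I would upgrade this to an isomorphism of bigraded rings. Since every slice of $K\R(n)$ is a regular slice sphere smashed with $H\mz$, the spectrum $K\R(n)$ is pure in the sense of the excerpt. By Proposition~\ref{pro:multiplicativeprop}, the multiplication on $K\R(n)_{\star}^{\Z/2}$ (coming from its $M\R$-module, hence $H\mz$-module, structure) is represented on the $E_2$-page by a pairing compatible with the one on $\underline{\pi}_{\star}(H\mz)$. The class $v_n \in \pi_{(2^n-1)(1+\alpha)}^{\Z/2}(K\R(n))$ lifts to the $E_2$-term as the generator of the $k=1$ summand, and multiplication by $v_n^k$ realizes the shift from the $0$-summand to the $k$-summand. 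Combined with the $H\mz^{\Z/2}_{\star}$-module structure of the $0$-summand, this exhibits the $E_2$-page as the Laurent polynomial ring
$$E_2^{\star,\star}(K\R(n))_{\Z/2} \cong H\mz_{\star}^{\Z/2}[v_n^{\pm 1}].$$

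The essentially routine step is checking that $v_n$ really does detect the generator on the $k=1$ summand, which follows from the second bullet of Lemma~\ref{lemma:slicekrn}: the classes $v_n^k$ are equivariant refinements of generators of $\pi_*^e(K\R(n))$ and are lifted through exactly the identification of slices appearing in \cite[Prop.~4.64]{HHR}. The only real content is that the slice filtration of $K\R(n)$ is as thin as possible, which has already been accomplished in Lemma~\ref{lemma:slicekrn}; the corollary itself is just the translation of that lemma into $E_2$-page language. No differentials, no extension problems, and no further computation are needed at this stage.
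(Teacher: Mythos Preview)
Your approach is exactly the paper's: the paper's proof is a single sentence pointing to Lemma~\ref{lemma:slicekrn} and Definition~\ref{de:slicespectralsequence}, and your additive identification just spells that out.

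One small correction to your multiplicative discussion: $K\R(n)$ is not a ring spectrum for $n>1$ (the paper flags this in the introduction), and it is certainly not an $H\mz$-module spectrum, so there is no ``multiplication on $K\R(n)_\star^{\Z/2}$'' nor an $H\mz$-module structure on $K\R(n)$ to invoke. The $H\mz_\star^{\Z/2}$-module structure on the $E_2$-page comes slice-by-slice from the fact that each slice is a suspension of $H\mz$, and the $v_n$-action comes from the $M\R[v_n^{-1}]$-module structure on $K\R(n)$ via Proposition~\ref{pro:multiplicativeprop} (applied to the pair $M\R$, $K\R(n)$, both pure). With that rewording your argument is fine, and the isomorphism with $H\mz_\star^{\Z/2}[v_n^{\pm 1}]$ should be read as one of bigraded $\Z[v_n^{\pm1}]$-modules (or $H\mz_\star^{\Z/2}$-modules) rather than of rings.
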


\begin{proof}
This is by definition of the $E_2$-page of the slice spectral sequence, as given in Definition \ref{de:slicespectralsequence}.
\end{proof}

We end this section by  completion results for the spectra $K\R(n)$.

\begin{lemma} \label{lemma:completion}
The map $E\Z/2_+ \rightarrow S^0$ induces weak equivalences of $\Z/2$-spectra $$K\R(n) \cong F(E\Z/2_+, K\R(n)),$$
and
$$K\R(n) \cong E\Z/2_+ \wedge K\R(n).$$
\end{lemma}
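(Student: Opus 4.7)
The plan is to reduce both claimed weak equivalences to vanishing statements and then to establish these via geometric fixed points together with a slice spectral sequence analysis. Smashing the isotropy separation cofibre sequence $E\Z/2_+\to S^0\to \widetilde{E\Z/2}$ with $K\R(n)$, and applying $F(-,K\R(n))$ to it, exhibits the two asserted equivalences as equivalent respectively to the vanishings
$$\widetilde{E\Z/2}\wedge K\R(n)\simeq 0\qquad\text{and}\qquad F(\widetilde{E\Z/2},K\R(n))\simeq 0.$$
Since $\widetilde{E\Z/2}$ has contractible underlying space, both spectra automatically have trivial underlying spectrum, so in each case one only needs to control the $\Z/2$-fixed points.

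For the first vanishing, the $\Z/2$-fixed points of $\widetilde{E\Z/2}\wedge K\R(n)$ are by definition $\Phi^{\Z/2}(K\R(n))$, which I would compute directly. By Hill--Hopkins--Ravenel, $\Phi^{\Z/2}(M\R)\simeq MO$, and the generators $r_i\in M\R_{i(1+\alpha)}$ hit (modulo decomposables) the polynomial generators of $MO_*=\F_2[y_i:i\neq 2^k-1]$. Since $k\R(n)$ is obtained from $M\R$ by killing every $r_i$ for $i\neq 2^n-1$, applying $\Phi^{\Z/2}$ kills every polynomial generator of $MO_*$, leaving $\Phi^{\Z/2}(k\R(n))\simeq H\F_2$ concentrated in degree $0$. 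In particular $\Phi^{\Z/2}(v_n)\in\pi_{2^n-1}(H\F_2)=0$, and because $\Phi^{\Z/2}$ commutes with homotopy colimits,
$$\Phi^{\Z/2}(K\R(n))\simeq \Phi^{\Z/2}(k\R(n))[\Phi^{\Z/2}(v_n)^{-1}]\simeq 0,$$
since the homotopy colimit of a sequence of zero maps is zero.

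For the second vanishing I would use the slice spectral sequence to establish uniform nilpotence of $a$ on $\pi_\star^{\Z/2}K\R(n)$, concretely $a^{2^{n+1}-1}=0$. The integral lift of the Milnor operation $\mathbb{Q}_n$ serves as the attaching map between successive non-trivial slices of $K\R(n)$, and the identity $\mathbb{Q}_n(\sigma^{-2^n})=a^{2^{n+1}-1}$ forces a slice differential whose target is $a^{2^{n+1}-1}$ (up to a power of $v_n$, which is a permanent cycle). Inspection of $H\mz_\star$ then shows that no higher-slice-filtration class has the right bidegree to rescue $a^{2^{n+1}-1}$, so it vanishes in $\pi_{-(2^{n+1}-1)\alpha}^{\Z/2}K\R(n)$, and multiplication by $a^{2^{n+1}-1}$ on $K\R(n)$ is null. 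Hence the tower $\cdots\to\Sigma^{-\alpha}K\R(n)\xrightarrow{a}K\R(n)$ computing $F(\widetilde{E\Z/2},K\R(n))=\holim_k\Sigma^{-k\alpha}K\R(n)$ has trivial $(2^{n+1}-1)$-fold composites, so both $\lim$ and $\lim^1$ vanish and $F(\widetilde{E\Z/2},K\R(n))^{\Z/2}=0$.

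The main obstacle is the second step: the $\Phi^{\Z/2}$ computation giving the first vanishing is clean and essentially formal once one has the HHR identification of $\Phi^{\Z/2}(M\R)$. In contrast, arguing for the second equivalence requires careful slice spectral sequence bookkeeping, pinning down the differential coming from $\mathbb{Q}_n$, identifying its integral lift with the attaching map appearing in the slice tower of $K\R(n)$, and verifying that the zero in the $E_\infty$-page genuinely lifts to a zero in $\pi_\star^{\Z/2} K\R(n)$ by checking that the shape of $H\mz_\star$ precludes any higher-filtration class at bidegree $-(2^{n+1}-1)\alpha$.
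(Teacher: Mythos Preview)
Your proposal is correct. Your argument for the second vanishing---showing $a^{2^{n+1}-1}=0$ in $\pi_\star^{\Z/2}K\R(n)$ via the slice differential supported on $\sigma^{-2^n}$ and then deducing $F(\widetilde{E\Z/2},K\R(n))\simeq 0$ from uniform $a$-nilpotence---is exactly what the paper does. The difference is in the first vanishing: the paper does not introduce a separate argument but simply observes that the same nilpotence of $a$ (phrased there as ``$v_n$ is $a$-torsion'', equivalent since $v_n$ is a unit) kills the colimit $\widetilde{E\Z/2}\wedge K\R(n)=\mathrm{hocolim}_a\,\Sigma^{-k\alpha}K\R(n)$ as well. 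So one slice computation handles both parts.

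Your geometric fixed points route for the first vanishing, via $\Phi^{\Z/2}(M\R)\simeq MO$ and $\Phi^{\Z/2}(k\R(n))\simeq H\F_2$, is a genuinely different and conceptually clean alternative; it avoids any spectral sequence input for that half. But since you carry out the slice analysis anyway for the second half, it is redundant: once you have $a^{2^{n+1}-1}=0$ you get the colimit statement for free. If you keep the $\Phi^{\Z/2}$ argument, note that the assertion ``killing all $r_i$ with $i\neq 2^n-1$ leaves $H\F_2$'' requires knowing that the images $\Phi^{\Z/2}(r_i)\in MO_i$ for $i\neq 2^k-1$ are polynomial generators modulo decomposables (so that the remaining relations from $r_{2^k-1}$, $k\neq n$, become trivial after those are killed); this is in HHR but should be cited precisely.
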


\begin{proof}
Consider the element $a^{2^{n+1}-1}v_n \in k\R(n)_{\star}^{\Z/2}$. By construction, $k\R(n)$ is a $M\R$ module. Since both $M\R$ and $k\R(n)$ are pure, we can use the multiplicatives properties of the slice spectral sequence described in Proposition \ref{pro:multiplicativeprop}, so the element $a^{2^{n+1}-1}v_n$ is represented on the $E^2$ page by the element $a^{2^{n+1}-1}v_n \in \Sigma^{(2^n-1)(1+\alpha)}H\mz_{\star}^{\Z/2} \subset E^2$. But this element is hit by the first non-trivial differential since $$d_{2^{n+1}-1}(\sigma^{-2^n}) = \mathbb{Q}_n(\sigma^{-2^n})v_n = a^{2^{n+1}-1}v_n.$$ Thus $v_n$ is $a$-torsion, and thus $\pi_{\star}(K\R(n))^{\Z/2}$ is also all $a$-torsion. This has two consequences:
\begin{itemize}
\item one has $\underset{a}{\mathrm{colim}} \ a^n \pi_{\star}(K\R(n)) =0$, so that $\widetilde{E\Z/2} \wedge K\R(n) =0$, the isotropy separation cofibre sequence gives the second weak equivalence.
\item and the $a$-divisible part $div_a(\pi_{\star}^{\Z/2}(K\R(n)))$  of $\pi_{\star}^{\Z/2}(K\R(n))$ is trivial, and $\pi_{\star}^{\Z/2}(F(\widetilde{E \Z/2}, K\R(n))) = div_a(\pi_{\star}^{\Z/2}(K\R(n)))[a^{\pm 1}] = 0$, so that the spectrum $F(\widetilde{E \Z/2}, K\R(n))$ is contractible.
Now, the cofibre sequence
$$ 0 = F(\widetilde{E \Z/2}, K\R(n)) \rightarrow F(S^0, K\R(n)) = K\R(n) \rightarrow F(E \Z/2_+, K\R(n))$$
provides the first desired weak equivalence.
\end{itemize}
\end{proof}

\section{The spectra $K\R(n)$ and their fixed points are self-dual}

In this section, consider the $\Z/2$-spectrum $\nabla K\R(n)$. Apply the exact functor $\nabla$ to the slice tower of $K\R(n)$ to obtain a tower

$$ \xymatrix{ \nabla (\Sigma^{(2^n-1)(1+\alpha)}k\R(n)) \ar@{-->}[dr]^{  \nabla\delta} & \ar[l]  \nabla( \Sigma^{(2^n-1)(1+\alpha)}H \mz) \ar@{-->}[d]_{ \nabla\mathbb{Q}_n} \\
 \nabla(k\R(n)) \ar[r] \ar[u]_{\nabla v_n} \ar@{-->}[dr]^{  \nabla\delta} & \ar[l]   \nabla(H \mz)   \ar@{-->}[d]_{ \nabla\mathbb{Q}_n} \\
 \nabla(\Sigma^{-(2^n-1)(1+\alpha)} k\R(n)) \ar@{-->}[dr]^{  \nabla\delta} \ar[u]_{ \nabla v_n} &   \ar[l] \nabla(\Sigma^{-(2^n-1)(1+\alpha)} H \mz)  \ar@{-->}[d]_{  \nabla \mathbb{Q}_n} \\
 \nabla(\Sigma^{-2(2^n-1)(1+\alpha)} k\R(n)) \ar@{-->}[dr]^{  \nabla\delta}  \ar[u]_{ \nabla v_n} &  \ar[l]  \nabla(\Sigma^{-2(2^n-1)(1+\alpha)} H \mz)  \ar@{-->}[d]_{ \nabla  \mathbb{Q}_n} \\
 \nabla(\Sigma^{-3(2^n-1)(1+\alpha)} k\R(n))   \ar[u]_{ \nabla v_n} &  \ar[l]  \nabla(\Sigma^{-3(2^n-1)(1+\alpha)} H \mz)  \\
 \vdots  & \vdots \\
  \nabla K\R(n) \ar[u]& } $$

in which triangles
$$ \nabla( \Sigma^{k(2^n-1)(1+ \alpha)}H\mz) \rightarrow \nabla( \Sigma^{k(2^n-1)(1+ \alpha)}k\R(n)) \rightarrow \nabla( \Sigma^{(k+1)(2^n-1)(1+ \alpha)}k\R(n))$$
are cofibre sequences, by exactness of $ \nabla$ (Remark \ref{rk:nablaexact}).

\begin{pro} \label{pro:slicenablakrn}
Let $F(n)$ be the fiber of the map $\Sigma^{-2+2\alpha}\nabla K\R(n) \rightarrow \Sigma^{-2+2\alpha}\nabla k\R(n)$. Then, there is a tower over $ \Sigma^{-2+2\alpha} \nabla K\R(n)$:

$$ \xymatrix{ \Sigma^{(2^n-1)(1+\alpha)}F(n) \ar[r] \ar[d] & \Sigma^{(2^n-1)(1+\alpha)}H \mz \\
F \ar[r] \ar[d] & H \Z  \ar@{-->}[ul]^{ \delta} \ar@{-->}[u]_{ \nabla \mathbb{Q}_n} \\
\Sigma^{-(2^n-1)(1+\alpha)} F(n) \ar[r] \ar[d] & \Sigma^{-(2^n-1)(1+\alpha)} H \mz \ar@{-->}[ul]^{ \delta} \ar@{-->}[u]_{ \nabla \mathbb{Q}_n} \\
\Sigma^{-2(2^n-1)(1+\alpha)} F(n) \ar[r] \ar[d]  & \Sigma^{-2(2^n-1)(1+\alpha)} H \mz \ar@{-->}[ul]^{ \delta} \ar@{-->}[u]_{ \nabla \mathbb{Q}_n} \\
\Sigma^{-3(2^n-1)(1+\alpha)} F(n) \ar[r] \ar[d] & \Sigma^{-3(2^n-1)(1+\alpha)} H \mz \ar@{-->}[ul]^{ \delta} \ar@{-->}[u]_{ \nabla\mathbb{Q}_n} \\
 \vdots \ar[d] & \\
\Sigma^{-2+2\alpha} \nabla K\R(n). & } $$
Moreover, the $E_2$-page of the associated spectral sequence converging to $$\left( \Sigma^{-2+2\alpha} \nabla K\R(n) \right)_{\star}^{\Z/2}$$ is isomorphic to $ H\mz_{\star}^{\Z/2}[v_n^{\pm 1}]$.
\end{pro}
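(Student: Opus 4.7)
The plan is to apply the functor $\Sigma^{-2+2\alpha}\nabla$ to the slice tower of $K\R(n)$ from Lemma \ref{lemma:slicekrn}, exploiting the self-duality of $H\mz$ encoded by Corollary \ref{cor:mackselfdual} (namely $\Sigma^{-2+2\alpha}\nabla H\mz \cong H\mz$) to keep the dualised slices in the form of shifted $H\mz$'s. By Remark \ref{rk:nablaexact}, $\nabla$ preserves cofibre sequences; combined with the hocolim/holim duality and the fact that $K\R(n) = \mathrm{hocolim}\,\Sigma^{-k(2^n-1)(1+\alpha)}k\R(n)$ via iterated $v_n^{-1}$, this realises $\Sigma^{-2+2\alpha}\nabla K\R(n)$ as a homotopy limit over the dualised tower with terms $\Sigma^{k(2^n-1)(1+\alpha)}L$ and cofibre sequences
$$\Sigma^{k(2^n-1)(1+\alpha)}H\mz \to \Sigma^{k(2^n-1)(1+\alpha)}L \to \Sigma^{(k-1)(2^n-1)(1+\alpha)}L,$$
where $L := \Sigma^{-2+2\alpha}\nabla k\R(n)$; these arise by dualising the slice cofibre sequences of $K\R(n)$ and applying Corollary \ref{cor:mackselfdual} slice-by-slice.

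To construct the tower of the proposition, I would use the $M\R$-module structure on $F(n)$ inherited via Proposition \ref{pro:dualitymod} from the $M\R$-module structures on $\nabla K\R(n)$ and $\nabla k\R(n)$, together with the naturality of the defining fibre sequence $F(n) \to \Sigma^{-2+2\alpha}\nabla K\R(n) \to L$; this provides multiplication-by-$v_n$ maps $\Sigma^{(k+1)(2^n-1)(1+\alpha)}F(n) \to \Sigma^{k(2^n-1)(1+\alpha)}F(n)$. To identify the cofibre of each such map as $\Sigma^{k(2^n-1)(1+\alpha)}H\mz$ (rather than something off by a suspension), I would apply the octahedral axiom to compare the $F(n)$-tower to the dualised $L$-tower level by level. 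The connecting maps $\delta$ and $\nabla\mathbb{Q}_n$ are obtained from the corresponding maps in Lemma \ref{lemma:slicekrn} by naturality of $\nabla$ combined with the self-duality isomorphism of Corollary \ref{cor:mackselfdual}.

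Finally, the $E_2$-page of the associated spectral sequence is the sum of the homotopy Mackey functors of the associated graded pieces, namely
$$\bigoplus_{k \in \Z} \underline{\pi}_\star\bigl(\Sigma^{k(2^n-1)(1+\alpha)}H\mz\bigr) \cong \bigoplus_{k \in \Z} v_n^{\,k}\cdot H\mz_\star^{\Z/2} \cong H\mz_\star^{\Z/2}[v_n^{\pm 1}],$$
identical to the $E_2$-page of the slice spectral sequence of $K\R(n)_\star^{\Z/2}$. The main obstacle is the careful bookkeeping of shifts: because $\nabla$ is contravariant and reverses arrow directions, one must verify that the $\Sigma^{-2+2\alpha}$ correction, together with Corollary \ref{cor:mackselfdual} applied uniformly at every level, exactly absorbs the suspension introduced by rotating the dualised cofibre sequence, so that each cofibre lands precisely in degree $k(2^n-1)(1+\alpha)$. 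This is what forces the particular shift $\Sigma^{-2+2\alpha}$ to appear in the statement.
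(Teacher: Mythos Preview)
Your proposal is correct and follows essentially the same approach as the paper: dualise the slice tower of $K\R(n)$ using the exactness of $\nabla$ (Remark \ref{rk:nablaexact}), invoke the self-duality $H\mz \cong \Sigma^{-2+2\alpha}\nabla H\mz$ from Corollary \ref{cor:mackselfdual} to identify the layers, and appeal to the octahedral axiom to reassemble the tower over $\Sigma^{-2+2\alpha}\nabla K\R(n)$. The paper's proof is a single sentence citing exactly these two ingredients; your additional use of the $M\R$-module structure (Proposition \ref{pro:dualitymod}) to organise the $v_n$-maps on $F(n)$ is a helpful elaboration but not a different route, and the paper defers that point to the proof of Theorem \ref{thm:selfand}.
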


\begin{proof}
The existence of such a diagram is a consequence of the isomorphism $H\mz \cong \Sigma^{-2+2 \alpha} \nabla H\mz$, provided by Corollary \ref{cor:mackselfdual}, and the octahedral  axiom.
\end{proof}

\begin{rk}
This implies $[S^{2-2 \alpha}, \nabla K\R(n)]^{\Z/2} \cong \Z$, for degree reasons.
\end{rk}

\begin{rk} \label{rk:slicenablekrn}
By the unicity property of the slice filtration provided by Lemma \ref{lemma:unicityslicetower}, this tower is actually the slice tower for $\nabla K\R(n)$.
\end{rk}

Recall from Proposition \ref{pro:dualitymod} that $ \nabla K\R(n)$ is naturally a $M\R$-module spectrum.

\begin{thm} \label{thm:selfand}
The map $M\R \rightarrow \Sigma^{-2+2\alpha} \nabla K\R(n)$ induced by a generator $1 \in \pi_0(\Sigma^{-2+2\alpha}K\R(n))$ factorizes through $K\R(n)$, and yields a weak equivalence
$$ K\R(n) \stackrel{\cong}{\rightarrow} \Sigma^{-2+2\alpha} \nabla K\R(n).$$
\end{thm}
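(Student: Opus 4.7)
The argument will proceed in three parts: construction of the map, factorization through $K\R(n)$, and verification that the factored map is a weak equivalence via the slice spectral sequence.

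For the construction, Proposition \ref{pro:dualitymod} endows $\Sigma^{-2+2\alpha}\nabla K\R(n)$ with a natural $M\R$-module structure. The group $\pi_0(\Sigma^{-2+2\alpha}\nabla K\R(n))$ is isomorphic to $\Z$ by the remark following Proposition \ref{pro:slicenablakrn}. A chosen generator furnishes a map $S^0 \rightarrow \Sigma^{-2+2\alpha}\nabla K\R(n)$, and by adjunction along the unit $S^0 \to M\R$ this extends uniquely to an $M\R$-module map $M\R \rightarrow \Sigma^{-2+2\alpha}\nabla K\R(n)$.

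To factor this map through $K\R(n) = M\R/(r_i : i \neq 2^n - 1)[v_n^{-1}]$, it suffices to check that each $r_i$ with $i \neq 2^n - 1$ acts trivially on the chosen generator and that $v_n$ acts invertibly on the target. For the first point, I would show $\pi_{i(1+\alpha)}(\Sigma^{-2+2\alpha}\nabla K\R(n)) = 0$ whenever $i \neq k(2^n-1)$, using the slice spectral sequence of Proposition \ref{pro:slicenablakrn}, whose $E_2$-page is $H\mz^{\Z/2}_{\star}[v_n^{\pm 1}]$. The only contributions in degree $i(1+\alpha)$ are of the form $v_n^k \cdot x$ with $x \in H\mz^{\Z/2}_{(i-k(2^n-1))(1+\alpha)}$, and an inspection of Figure \ref{fig:coeffz} shows $H\mz^{\Z/2}_{j(1+\alpha)} = 0$ unless $j = 0$. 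Invertibility of $v_n$ is immediate from the $v_n$-periodic structure of the tower in Proposition \ref{pro:slicenablakrn}.

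Let $f : K\R(n) \rightarrow \Sigma^{-2+2\alpha}\nabla K\R(n)$ denote the resulting $M\R$-module map. It induces a morphism of slice spectral sequences, whose $E_2$-pages are both identified with $H\mz^{\Z/2}_{\star}[v_n^{\pm 1}]$. The map of $2k(2^n-1)$-slices $\Sigma^{k(2^n-1)(1+\alpha)}H\mz \rightarrow \Sigma^{k(2^n-1)(1+\alpha)}H\mz$ induced by $f$ is an endomorphism of an Eilenberg--MacLane $\Z/2$-spectrum, hence an integer; by $M\R$-linearity this integer is determined by its value on the $0$-slice, which is $1$ by construction. Therefore $f$ induces the identity on the $E_2$-page, hence an isomorphism on $E_\infty$, and by strong convergence $f$ is a weak equivalence of $\Z/2$-spectra.

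The main obstacle is the comparison of differentials in the two slice spectral sequences, which must agree under the $E_2$-identification. This rests on the multiplicativity of the slice spectral sequence (Proposition \ref{pro:multiplicativeprop}) for the pure $\Z/2$-spectra involved, together with the fact that the self-duality $H\mz \cong \Sigma^{-2+2\alpha}\nabla H\mz$ of Corollary \ref{cor:mackselfdual} transports the integral lift of the Milnor operation $\mathbb{Q}_n$ to itself (up to suspension); once this identification of attaching maps is in place, the $E_2$-level comparison described above completes the proof.
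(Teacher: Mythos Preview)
Your overall strategy mirrors the paper's: construct the $M\R$-module map from a generator, factor through $K\R(n)$, then compare slice spectral sequences. However, there is a genuine gap in the factorization step.

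You propose to kill each $r_i$ by showing $\pi_{i(1+\alpha)}(\Sigma^{-2+2\alpha}\nabla K\R(n)) = 0$ whenever $i$ is not a multiple of $2^n-1$. But the ideal defining $k\R(n)$ contains \emph{every} $r_i$ with $i \neq 2^n-1$, in particular $r_{2(2^n-1)}, r_{3(2^n-1)}, \ldots$ For these indices the target group $\pi_{k(2^n-1)(1+\alpha)}(\Sigma^{-2+2\alpha}\nabla K\R(n))$ is isomorphic to $\Z$ (generated by $v_n^k$), not zero, so your vanishing argument does not apply and you still owe the verification that $r_{k(2^n-1)} \cdot 1 = 0$ in this copy of $\Z$. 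The paper handles all $r_i$ with $i \neq 2^n-1$ uniformly via Proposition~\ref{pro:dualitymod}: multiplication by $r_i$ is the zero endomorphism of $K\R(n)$ by construction, hence its dual is the zero endomorphism of $\nabla K\R(n)$, so $r_i$ annihilates every class there. That is the missing idea.

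A secondary remark: your final paragraph about comparing differentials is unnecessary. Once $f$ is a map of $\Z/2$-spectra, functoriality of the slice tower (Remark~\ref{rk:slicenablekrn}) already gives a morphism of spectral sequences; if that morphism is an isomorphism on $E_2$, it is automatically an isomorphism on every page and the differentials are forced to agree. The only substantive point is that the induced map on $E_2$ is the identity under the identification with $H\mz_\star^{\Z/2}[v_n^{\pm 1}]$, and your reduction of this to the $0$-slice via $M\R$-linearity is essentially the paper's argument using the $\Z[v_n]$-module structure and Proposition~\ref{pro:multiplicativeprop}.
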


\begin{proof}
Choose a map
$$f : S^0 \rightarrow \Sigma^{-2+2\alpha} \nabla K\R(n)$$
representing a generator of $\pi_0(\Sigma^{-2+2\alpha} \nabla K\R(n))$.
By adjunction, $f$ induces a $M\R$-module map
$M\R \rightarrow \Sigma^{-2+2\alpha} \nabla K\R(n)$,
but $K\R(n)$ is a $v_n$-local $M\R$-module, thus the $M\R$ module $\nabla K \R(n)$ is also local by Proposition \ref{pro:dualitymod}, and we get a map
$M\R[v_n^{\pm 1}] \rightarrow \Sigma^{-2+2\alpha} \nabla K\R(n)$.

Now, by construction, the endomorphism of $K\R(n)$ induced by $r_k \in M\R_{k(1+\alpha)}$, for $k \neq (2^n-1)$ is trivial. By Proposition \ref{pro:dualitymod}, the map $$\Sigma^{k(1+ \alpha)}\nabla K\R(n) \rightarrow \nabla K\R(n)$$ is also trivial. Consequently, the map of $M\R$-modules $$M\R \rightarrow \Sigma^{-2+2\alpha}K\R(n)$$ factorizes through $k\R(n)$, and gives a $M\R$-module map
$$ k\R(n) \rightarrow \Sigma^{-2+2\alpha} \nabla K\R(n).$$
Now, by construction, $\Sigma^{-2+2\alpha} \nabla K\R(n)$ is $v_n$-local. This provides the map
$$\phi : K\R(n) \rightarrow \Sigma^{-2+2\alpha} \nabla K\R(n).$$

It remains to show that $\phi$ induces an isomorphism of $RO(\Z/2)$-graded homotopy groups.

We know that $K\R(n)_{*(1+\alpha)}^{\Z/2} \cong \Z[v_n^{\pm 1}]$ and $\Z[v_n^{\pm 1}] \cong \Sigma^{-2+2\alpha}\nabla K\R(n)_{*(1+\alpha)}^{\Z/2}$ by the spectral sequence of Proposition \ref{pro:slicenablakrn} (which is the slice spectral sequence for $\Sigma^{-2+2\alpha} \nabla K\R(n)$ by Remark \ref{rk:slicenablekrn}), and the slice spectral sequence for $K\R(n)$. Moreover, this is an isomorphism of $\Z[v_n]$-modules because this is a map of $M\R$-modules, every $\Z/2$-spectrum in play is pure, so this is a consequence of the multiplicative properties of the slice spectral sequence (Proposition \ref{pro:multiplicativeprop}).

By construction, $\pi_{\star}^{ \Z/2}(\phi)$ is a $\Z[v_n]$-module map, again by Proposition \ref{pro:multiplicativeprop}. As a consequence, $\phi$ induces an isomorphism
$$ \pi_{*(1+\alpha)}^{ \Z/2}(\phi) : K\R(n)_{*(1+\alpha)}^{\Z/2} \cong \left(\Sigma^{-2+2\alpha} \nabla K\R(n)\right)_{*(1+\alpha)}^{\Z/2}.$$

But the $E_2$-pages of both slice spectral sequences are free $H\mz_{\star}$-modules, generated in degrees multiple of $(1+\alpha)$, so $ \phi$ induces an isomorphism of $H \mz_{ \star}$-modules between the $E_2$-pages of the slice spectral sequences for $K \R(n)$ and $\Sigma^{-2+2 \alpha} \nabla K\R(n)$. The spectral sequence morphism induced by $ \phi$ is an isomorphism, thus $\phi$ is a weak equivalence.
\end{proof}

We now turn to the study of the $\Z/2$-equivariant dual of the higher chromatic analogues of $KO$.

\begin{de}
Let $KO(n)$ be the non-equivariant spectrum $(K\R(n))^{\Z/2}$.
\end{de}

\begin{rk} We already know by Lemma \ref{lemma:completion} and Proposition \ref{pro:fixeddual} that 
\begin{eqnarray*}
\nabla^e(KO(n)) & = & \nabla^e\left( (K\R(n))^{\Z/2}\right) \\
&\cong & \left(\nabla K\R(n)\right)^{\Z/2} \\
& \cong & \left(\Sigma^{-2+2\alpha} K\R(n)\right)^{\Z/2}.
\end{eqnarray*}
Unfortunately, there is {\it~a priori} no reason to identify the non-equivariant spectrum $\left(\Sigma^{-2+2\alpha} K\R(n)\right)^{\Z/2}$ with a shift of $KO(n)$. This identification if the subject of the end of this paper.
\end{rk}

\begin{lemma}
The $\Z/2$-spectrum $K\R(n)$ is $(2^{n+1}-2^{n+1}\alpha)$-periodic.
\end{lemma}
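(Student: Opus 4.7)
The plan is to construct an invertible class $\tilde{u}\in \pi^{\Z/2}_{2^{n+1}(1-\alpha)}(M\R[v_n^{-1}])$; since $K\R(n)$ is a module over $M\R[v_n^{-1}]$, multiplication by $\tilde{u}$ will then deliver the required self-equivalence $\Sigma^{2^{n+1}(1-\alpha)} K\R(n) \xrightarrow{\simeq} K\R(n)$.

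To locate a candidate for $\tilde{u}$, I first work on the $E_2$-page of the slice spectral sequence for $K\R(n)$, which by the previous subsection equals $H\mz^{\Z/2}_{\star}[v_n^{\pm 1}]$. Inspection of the computation of $H\mz_\star$ summarised in figure~\ref{fig:coeffz} shows that $H\mz^{\Z/2}_{2^{n+1}(1-\alpha)}\cong\Z$, so I fix a generator $x$ in that bidegree whose mod-$2$ reduction is the class $\sigma^{-2^{n+1}}\in H\mf^{\Z/2}_{2^{n+1}(1-\alpha)}$. The next step is to verify that $x$ is a permanent cycle: the only potentially non-trivial slice differential on $x$ is $d_{2^{n+1}-1}$, which by Lemma~\ref{lemma:slicekrn} is given modulo~$2$ by the Milnor primitive $\mathbb{Q}_n$. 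Since $\mathbb{Q}_n$ is a derivation and $\mathbb{Q}_n(\sigma^{-2^n})=a^{2^{n+1}-1}$, the Leibniz rule yields
\[
\mathbb{Q}_n(\sigma^{-2^{n+1}}) \;=\; 2\sigma^{-2^n}a^{2^{n+1}-1} \;\equiv\; 0 \pmod{2}.
\]
Each relevant entry of $H\mz^{\Z/2}_\star$ is either $2$-torsion free or already all $2$-torsion (cf.\ the $\mz$ and $<\F>$ entries in figure~\ref{fig:coeffz}), so the mod-$2$ vanishing above actually forces the integral differential $d_{2^{n+1}-1}(x)$ to vanish as well. The same argument applied to the slice spectral sequence for $M\R[v_n^{-1}]$ then produces the desired lift $\tilde{u}$.

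It remains to show that $\tilde u$ is a unit. Its restriction to the underlying non-equivariant spectrum lies in $\pi_0(MU[v_n^{-1}])$, and with an appropriate normalisation of $x$ this restriction is $\pm 1$, hence a unit. By the multiplicative property of the slice spectral sequence (Proposition~\ref{pro:multiplicativeprop}), multiplication by $\tilde{u}$ is then an automorphism of the $E_\infty$-page at every bidegree, which forces $\tilde{u}$ itself to be invertible in $\pi^{\Z/2}_\star(M\R[v_n^{-1}])$. The main obstacle is the combinatorial bookkeeping needed to confirm the integral vanishing of $d_{2^{n+1}-1}(x)$; once this is pinned down, the rest is a formal consequence of the slice spectral sequence machinery developed above.
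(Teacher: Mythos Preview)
Your overall strategy---produce a class in the slice spectral sequence detecting $\sigma^{-2^{n+1}}$, show it is a permanent cycle, and use it together with $v_n$ to build a periodicity self-map of $K\R(n)$---is exactly the one the paper follows. However, the two steps you flag as routine are precisely where the paper does something different, and as written your argument has real gaps in both places.

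\textbf{Permanent cycles.} You assert that ``the only potentially non-trivial slice differential on $x$ is $d_{2^{n+1}-1}$'' and then transport this verbatim to the slice spectral sequence for $M\R[v_n^{-1}]$. Neither claim is justified. Even for $K\R(n)$ there are possible differentials $d_{2k(2^n-1)+1}$ for all $k\geq 1$, and one must argue they vanish on $x$ (either for target-degree reasons or because the spectral sequence has already collapsed). For $M\R[v_n^{-1}]$ the situation is much worse: the slices involve all the generators $r_i$, so there is no reason the ``same argument'' applies. The paper does not attempt any such direct computation; it instead invokes \cite[Corollary~9.13]{HHR} to conclude that $\sigma^{-2^{n+1}}v_n$ is a permanent cycle in $E^2(M\R)$, and pushes this forward along the module map.

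\textbf{Invertibility.} Your deduction ``$\tilde u$ restricts to a unit in $\pi_0(MU[v_n^{-1}])$, hence multiplication by $\tilde u$ is an automorphism of the $E_\infty$-page, hence $\tilde u$ is invertible in $\pi_\star^{\Z/2}(M\R[v_n^{-1}])$'' does not follow: an underlying unit need not be an equivariant unit, and multiplication by $x$ is certainly \emph{not} an isomorphism on the $\Z/2$-level of $E_2=H\mz_\star[v_n^{\pm 1}]$, since $\sigma^{-2}$ is not invertible in $H\mz_\star^{\Z/2}$. The paper sidesteps this entirely by passing to the Borel completion: it applies $F(E\Z/2_+,-)$ to the slice tower, where the slice homotopy becomes $\Z[a,\sigma^{\pm 2}]/(2a)$ and multiplication by $\sigma^{-2^{n+1}}$ is visibly an isomorphism already on $E_2$. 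Lemma~\ref{lemma:completion} then lets one transport the resulting equivalence back to $K\R(n)$ itself. If you want to salvage your approach, the cleanest fix is to invoke Lemma~\ref{lemma:completion} directly: since $K\R(n)$ is free (equivalently complete), a self-map which is an underlying equivalence is a genuine equivalence, and you never need $\tilde u$ to be a unit in $M\R[v_n^{-1}]$ at all.
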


\begin{proof}
Consider the element $ \sigma^{-2^{n+1}}v_n \in H \mz_{2^{n+1}-2^{n+1}\alpha + (2^n-1)(1+ \alpha)} \subset E^2(M\R)$ where $E^2(M\R)$ is the $E_2$-page of the slice spectral sequence for $M \R$. Recall that, by \cite[Corollary 9.13]{HHR}, the element $ \sigma^{-2^{n+1}}v_n \in E^2(M\R)$ is a permanent cycle.

Denote $F(E\Z/2_+,E^2(K\R(n)))$ the $E_2$-page of the spectral sequence induced by the tower obtained by the slice tower applying the functor $F(E\Z/2_+,-)$.
Now, the morphism of spectral sequences $E^2(K\R(n)) \rightarrow F(E\Z/2_+,E^2(K\R(n)))$ determines the differentials in $F(E\Z/2_+,E^2(K\R(n)))$ by the same formul\ae as in $E^2(K\R(n))$. This assures the convergence of $F(E\Z/2_+,E^2(K\R(n)))$ to $F(E\Z/2_+,K\R(n))_{\star}$.
The map 
$$ \Sigma^{-2^{n+1}(1- \alpha) + (2^n-1)(1+ \alpha)} F(E\Z/2_+,E^2(K\R(n))) \rightarrow F(E\Z/2_+,E^2(K\R(n)))$$ 
induced by the product with $\sigma^{-2^{n+1}}v_n$ via the $M\R[v_n^{-1}]$-module structure coincides with the multiplication with $ \sigma^{-2^{n+1}}$ on the copies of 
$$(F(E\Z/2_+,H \mz))_{\star}^{\Z/2_+} \cong \Z[a, \sigma^{\pm 2}]/(2a),$$
which is an isomorphism of $\Z$-modules. Thus, the product by $\sigma^{-2^{n+1}}v_n$ induces an isomorphism of the spectral sequence $F(E\Z/2_+,E^2(K\R(n)))$.
We conclude that the product by $\sigma^{-2^{n+1}}$ on $K\R(n)$ gives the desired periodicity isomorphism, by the $v_n$-periodicity of $K\R(n)$.
\end{proof}

\begin{cor} \label{cor:koselfand}
Let $KO(n) = (K\R(n))^{ \Z/2}$. There is a weak equivalence
$$ KO(n) \cong \Sigma^{-2^{n+2}+4} \nabla^e KO(n).$$
\end{cor}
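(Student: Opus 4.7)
The plan is to follow the sketch outlined in the introduction. The corollary rests on three ingredients established earlier: the strong completion of $K\R(n)$ (Lemma \ref{lemma:completion}), the compatibility between equivariant and non-equivariant Anderson duality for complete spectra (Proposition \ref{pro:fixeddual}), and the $(2^{n+1}(1-\alpha))$-periodicity of $K\R(n)$ from the preceding lemma.

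First, I would combine Lemma \ref{lemma:completion} with Proposition \ref{pro:fixeddual} to exchange Anderson duality and fixed points, obtaining
$$ \nabla^e KO(n) \;=\; \nabla^e\!\left( K\R(n)^{\Z/2} \right) \;\cong\; \left( \nabla K\R(n) \right)^{\Z/2}. $$
Applying Theorem \ref{thm:selfand} to identify $\nabla K\R(n) \cong \Sigma^{2-2\alpha} K\R(n)$ then rewrites this as
$$ \nabla^e KO(n) \;\cong\; \left( \Sigma^{2-2\alpha} K\R(n) \right)^{\Z/2}. $$
Thus the corollary reduces to exhibiting a non-equivariant equivalence between $\left( \Sigma^{2-2\alpha} K\R(n) \right)^{\Z/2}$ and $\Sigma^{2^{n+2}-4} KO(n)$.

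The heart of the argument is then to construct, from the invertible classes present on $K\R(n)$, an equivariant equivalence of the form $\Sigma^{2-2\alpha} K\R(n) \cong \Sigma^{N} K\R(n)$ (or more generally one that induces such an equivalence on fixed points) with $N \equiv 2^{n+2}-4$ after fixed points. The key invertible classes are $v_n^{\pm 2} \in \pi^{\Z/2}_{\pm 2(2^n-1)(1+\alpha)} K\R(n)$ and $\sigma^{\mp 2^{n+1}} \in \pi^{\Z/2}_{\mp 2^{n+1}(1-\alpha)} K\R(n)$ (the latter supplied by the preceding periodicity lemma). The essential numerical fact is that the underlying non-equivariant degree of $v_n^2$ is $2\cdot 2(2^n-1) = 2^{n+2}-4$, which is precisely the shift appearing in the corollary. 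Combining $v_n^{\pm 2}$ with powers of $\sigma^{\mp 2^{n+1}}$, one produces an invertible class whose degree differs from $-(2-2\alpha)$ by a pure integer shift congruent to $2^{n+2}-4$ modulo the period of $KO(n)$.

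The main obstacle is this last bookkeeping step. Since the two periodicities live in degrees $(2^n-1)(1+\alpha)$ and $2^{n+1}(1-\alpha)$, no integer linear combination of them alone yields the exact shift $(2^{n+2}-4)-(2-2\alpha) = (2^{n+2}-6) + 2\alpha$ on the nose; one only reaches this value up to a multiple of the integer periodicity $2^{n+2}(2^n-1)$ generated by these two classes. The resolution is that on \emph{fixed points} additional identifications become available --- the residual $\alpha$-shift is absorbed using the completeness of $K\R(n)$ (Lemma \ref{lemma:completion}) via the Adams-type isomorphism featured in the proof of Proposition \ref{pro:fixeddual} --- so that the resulting integer shift on $KO(n)$ is exactly $2^{n+2}-4 = |v_n^2|_e$, as required.
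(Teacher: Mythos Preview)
Your overall strategy is exactly that of the paper: reduce to computing $(\Sigma^{2-2\alpha}K\R(n))^{\Z/2}$ via Theorem \ref{thm:selfand}, Lemma \ref{lemma:completion}, and Proposition \ref{pro:fixeddual}, and then use the periodicities of $K\R(n)$ to convert the $(2-2\alpha)$-suspension into an integer suspension. However, the final bookkeeping step contains a genuine gap.

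Your claim that ``no integer linear combination of them alone yields the exact shift \ldots\ on the nose'' is false. The two periodicities have degrees $(2^n-1)(1+\alpha)$ and $2^{n+1}(1-\alpha)$, and one checks directly that
\[
(2-2\alpha)\;-\;2^{n+1}(1-\alpha)\;-\;2(2^n-1)(1+\alpha)\;=\;4-2^{n+2},
\]
a \emph{pure integer}. Thus one application of the $2^{n+1}(1-\alpha)$-periodicity followed by $v_n^{2}$ yields an equivariant weak equivalence
\[
\Sigma^{2-2\alpha}K\R(n)\;\cong\;\Sigma^{(2-2^{n+1})+(2^{n+1}-2)\alpha}K\R(n)\;\cong\;\Sigma^{\,4-2^{n+2}}K\R(n),
\]
and fixed points commutes with the resulting integer suspension. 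This is precisely the chain the paper writes down. Your difficulty arose because you aimed for the shift $+\,(2^{n+2}-4)$ rather than $-(2^{n+2}-4)$; for $n\ge 2$ the former is indeed not reached by these two periodicities, but the latter is, and it is the latter that the statement requires.

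Consequently, your proposed ``resolution'' --- absorbing a residual $\alpha$-shift on fixed points via completeness and an Adams-type identification --- is neither needed nor valid. Completeness gives $E^{\Z/2}\simeq (E\Z/2_+\wedge E)^{\Z/2}$ and the Adams isomorphism identifies this with an orbit spectrum, but neither produces an equivalence $(\Sigma^{V}E)^{\Z/2}\simeq \Sigma^{|V|}E^{\Z/2}$ for nontrivial $V$; that step of your argument does not go through as written. The fix is simply to redo the arithmetic and observe that the $\alpha$-part cancels exactly at the equivariant level.
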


\begin{proof}
Passage to fixed points in the chain of weak equivalences
$$ \nabla K\R(n) \cong \Sigma^{2-2 \alpha} K \R(n) \cong \Sigma^{-2^{n+1}+2 +(2^{n+1}-2)\alpha}K\R(n) \stackrel{v_n^2}{\cong} \Sigma^{4-2^{n+2}}K \R(n)$$
shows that
$$ (\nabla K\R(n))^{\Z/2} \cong (\Sigma^{4-2^{n+2}} K\R(n))^{\Z/2} \cong \Sigma^{4-2^{n+2}} (K\R(n))^{\Z/2}.$$

Now, Lemma \ref{lemma:completion} and Proposition \ref{pro:fixeddual} gives a weak equivalence $$(\nabla K\R(n))^{\Z/2} \cong \nabla^e( (K\R(n))^{\Z/2}) = \nabla^e(KO(n)).$$
\end{proof}



\end{document}